\newcommand{\C}{\mathbb{C}}
\newcommand{\R}{\mathbb{R}}
\newcommand{\N}{\mathbb{N}}
\DeclareMathOperator{\tr}{tr}
\numberwithin{equation}{section}
\newtheorem{thm}{Theorem}[section]
\newtheorem{prop}[thm]{Proposition}
\newtheorem{lem}[thm]{Lemma}
\newtheorem{cor}[thm]{Corollary}
{ \theoremstyle{definition}
\newtheorem{defn}[thm]{Definition}
\newtheorem{ex}[thm]{Example}
 \newtheorem{rem}[thm]{Remark}}
\begin{document}

\allowdisplaybreaks

\newcommand{\arXivNumber}{1801.06013}

\renewcommand{\thefootnote}{}

\renewcommand{\PaperNumber}{109}

\FirstPageHeading

\ShortArticleName{Inverse of Infinite Hankel Moment Matrices}

\ArticleName{Inverse of Infinite Hankel Moment Matrices\footnote{This paper is a~contribution to the Special Issue on Orthogonal Polynomials, Special Functions and Applications (OPSFA14). The full collection is available at \href{https://www.emis.de/journals/SIGMA/OPSFA2017.html}{https://www.emis.de/journals/SIGMA/OPSFA2017.html}}}

\Author{Christian BERG~$^\dag$ and Ryszard SZWARC~$^\ddag$}

\AuthorNameForHeading{C.~Berg and R.~Szwarc}

\Address{$^\dag$~Department of Mathematical Sciences, University of Copenhagen,\\
\hphantom{$^\dag$}~Universitetsparken 5, DK-2100 Copenhagen, Denmark}
\EmailD{\href{mailto:berg@math.ku.dk}{berg@math.ku.dk}}
\URLaddressD{\url{http://www.math.ku.dk/~berg/}}

\Address{$^\ddag$~Institute of Mathematics, University of Wroclaw,\\
\hphantom{$^\ddag$}~pl.~Grunwaldzki 2/4, 50-384 Wroc{\l}aw, Poland}
\EmailD{\href{mailto:szwarc2@gmail.com}{szwarc2@gmail.com}}
\URLaddressD{\url{http://www.math.uni.wroc.pl/~szwarc/}}

\ArticleDates{Received January 19, 2018, in final form October 02, 2018; Published online October 06, 2018}

\Abstract{Let $(s_n)_{n\ge 0}$ denote an indeterminate Hamburger moment sequence and let $\mathcal H=\{s_{m+n}\}$ be the corresponding positive definite Hankel matrix. We consider the question if there exists an infinite symmetric matrix $\mathcal A=\{a_{j,k}\}$, which is an inverse of $\mathcal H$ in the sense that the matrix product $\mathcal A\mathcal H$ is defined by absolutely convergent series and $\mathcal A\mathcal H$ equals the identity matrix $\mathcal I$, a property called (aci). A candidate for $\mathcal A$ is the coefficient matrix of the reproducing kernel of the moment problem, considered as an entire function of two complex variables. We say that the moment problem has property (aci), if (aci) holds for this matrix $\mathcal A$. We show that this is true for many classical indeterminate moment problems but not for the symmetrized version of a cubic birth-and-death process studied by Valent and co-authors. We consider mainly symmetric indeterminate moment problems and give a number of sufficient conditions for (aci) to hold in terms of the recurrence coefficients for the orthonormal polynomials. A sufficient condition is a rapid increase of the recurrence coefficients in the sense that the quotient between consecutive terms is uniformly bounded by a constant strictly smaller than one.
We also give a simple example, where (aci) holds, but an inverse matrix of $\mathcal H$ is highly non-unique.}

\Keywords{indeterminate moment problems; Jacobi matrices; Hankel matrices; orthogonal polynomials}
\Classification{42C05; 44A60; 47B36; 33D45; 60J80}

\renewcommand{\thefootnote}{\arabic{footnote}}
\setcounter{footnote}{0}

\section{Introduction and summary of results}
Consider a Hamburger moment sequence $(s_n)$ given as
\begin{gather*}
s_n=\int_{-\infty}^\infty x^n{\rm d}\mu(x), \qquad n\ge 0,
\end{gather*}
 where $\mu$ is a probability measure with infinite support and moments of any order. The corresponding orthonormal polynomials $P_n$, obtained from the sequence of monomials $x^n$ by the Gram--Schmidt procedure, satisfy $P_0=1$ and
\begin{gather}\label{eq:rec}
xP_n(x)=b_nP_{n+1}(x)+a_nP_n(x)+b_{n-1}P_{n-1}(x),\qquad n\ge 0,
\end{gather}
where $b_n>0, $ $a_n\in\mathbb{R}$ with the convention $b_{-1}=1$, $P_{-1}=0$. For $n\ge 0$ we have $\deg P_n=n$ and the leading coefficient of $P_n$ is equal to
\begin{gather}\label{eq:lead}
{1\over b_0b_1\cdots b_{n-1}}.
\end{gather}

We are interested in the so-called indeterminate case, when the moments $s_n$ do not determine the measure $\mu$ uniquely. Such measures must have unbounded support. In particular the numbers $s_{2n}$ must grow faster than any exponential.

The infinite Hankel matrix $\mathcal H=\{h_{m,n}\}$
 \begin{gather*}
h_{m,n}=s_{m+n}=\big\langle x^m,x^n\big\rangle_{L^2(\mu)},\qquad m,n\ge 0
\end{gather*}
is positive definite in the sense that
\begin{gather}\label{eq:Hank}
(\mathcal Hv,v)=\sum_{m,n=0}^\infty s_{m+n}v_n\overline{v}_m>0,\qquad v\in \mathcal{F}_c(\mathbb{N}_0), \qquad v\neq 0,
\end{gather}
where $\mathcal{F}_c(\mathbb{N}_0)$ denotes the set of complex sequences $v=(v_n)_{n\ge 0}$, with only finitely many non-zero entries.

Berg, Chen and Ismail \cite{B:C:I} showed that the quadratic form associated with the matrix $\mathcal H$ is bounded below, i.e., for a suitable constant $C>0$
\begin{gather*}
(\mathcal{H}v,v)\ge C\|v\|^2,\qquad v\in \mathcal{F}_c(\mathbb{N}_0),
\end{gather*}
if and only if the moment problem associated with $(s_n)$ is indeterminate.

Hamburger obtained a related characterization of indeterminacy in terms of the quadratic forms associated with the matrices $\{s_{j+k}\}$ and $\{s_{j+k+2}\}$, see \cite[p.~70]{S:T}. For a comparison between Hamburger's result and the result of Berg--Chen--Ismail, see \cite{B:C:I}.

Yafaev \cite{Y} contains a characterization of closability of the Hankel form \eqref{eq:Hank} leading to the determinate case.

The smallest eigenvalue $\lambda_N$ of the truncated matrix $\mathcal{H}_N=\{s_{m+n}\}_{0\le m,n\le N}$ has been studied in a number of papers starting with Szeg\H{o} and Widom--Wilf, see~\cite{B:S1} and references therein.

The inverse of $\mathcal H_N$ can simply be described as the matrix
\begin{gather*}
\mathcal A^{(N)}=\{a_{j,k}(N)\}_{0\le j,k\le N},
\end{gather*}
where
\begin{gather}\label{eq:N-repker}
K_N(z,w)=\sum_{n=0}^NP_n(z)P_n(w)=\sum_{j,k=0}^N a_{j,k}(N)z^jw^k.
\end{gather}
A proof and historical details can be found in \cite[Theorem~1.1]{B:S1}.

In the indeterminate case it is possible to let $N\to \infty$ in \eqref{eq:N-repker} and obtain the infinite matrix $\mathcal A=\{a_{j,k}\}$ with $a_{j,k}=\lim\limits_{N\to\infty}a_{j,k}(N)$ as coefficient matrix of the entire function of two complex variables
\begin{gather*}
K(z,w)=\sum_{n=0}^\infty P_n(z)P_n(w)=\sum_{j,k=0}^\infty a_{j,k}z^jw^k,
\end{gather*}
called the reproducing kernel of the indeterminate moment problem.

The coefficients of $\mathcal H$ can grow fast, but on the other hand the coefficients of $\mathcal A$ decay fast, so it is natural to examine if it is possible to let $N$ tend to infinity in the equations
\begin{gather*}
\sum_{k=0}^N a_{i,k}(N)s_{k+j}=\delta_{i,j},\qquad 0\le i,j\le N,
\end{gather*}
and to obtain the infinite matrix equation
\begin{gather}\label{eq:AH}
\mathcal A\mathcal H=\mathcal H\mathcal A=\mathcal I,
\end{gather}
i.e.,
\begin{gather}\label{eq:AH1}
\sum_{k=0}^\infty a_{i,k}s_{k+j} =\delta_{i,j},\qquad i,j \ge 0,
\end{gather}
meaning that all the above series are convergent with the given sum.
Notice that since both matrices $\mathcal H$ and $\mathcal A$ are symmetric, it is enough to prove $\mathcal A\mathcal H=\mathcal I$ in~\eqref{eq:AH}.

Since absolute convergence is often more easy to establish than convergence, we investigate the question if the series in~\eqref{eq:AH1} are absolutely convergent for all~$i$,~$j$.

For the Stieltjes--Wigert moment sequence (sometimes called log-normal moments)
\begin{gather*}
s_n=q^{-\tfrac12 (n+1)^2},\qquad 0<q<1,
\end{gather*}
 it was shown that \eqref{eq:AH} holds and all the series in \eqref{eq:AH1} are absolutely convergent, see \cite[Section~5]{B:S1}. After this was done the same results were established but not published for several other known indeterminate moment problems, and we began to search for general theorems about~\eqref{eq:AH}. In this paper we present the general results obtained so far. At present we have no applications of equation~\eqref{eq:AH}.

If $\mathcal H$ is replaced by an infinite positive definite Toeplitz matrix $M$, the question of existence of a classical inverse of $M$ in the sense of \eqref{eq:AH} has been studied in~\cite{E:G:T}.

The sequence $c_k:=\sqrt{a_{k,k}}$ plays a crucial role in our investigations. It decays quickly to zero in the sense that $\lim\limits_{k\to\infty} k\root{k}\of{c_{k}}=0$, see~\eqref{eq:minexpt}.

Concerning the matrix $\mathcal A$, it was shown in \cite[Section~4]{B:S1} that it is of trace class, and that
\begin{gather*}
\tr\mathcal A=\frac{1}{2\pi}\int_0^{2\pi} \sum_{k=0}^\infty \big|P_k\big({\rm e}^{{\rm i}t}\big)\big|^2{\rm d}t.
\end{gather*}

In the indeterminate case the infinite Hankel matrix $\mathcal H$ does not correspond to an operator on $\ell^2$ defined on the subspace spanned by the standard orthonormal basis $(\delta_n)_{n\ge 0}$ in $\ell^2$, where $\delta_n=(\delta_{n,m})_{m\ge 0}$.

In fact, by a theorem of Carleman we necessarily have $\sum\limits_{n=0}^\infty s_{2n}^{-1/(2n)}<\infty$, hence $s_{2n}\ge 1$ for $n$ sufficiently large, and therefore
\begin{gather*}
\sum_{m=0}^\infty s_{n+m}^2=\infty\qquad \mbox{for all} \ n.
\end{gather*}

In connection with the study of \eqref{eq:AH} we introduce the following:

\begin{defn}\label{thm:defAH} An indeterminate moment problem has property
\begin{enumerate}\itemsep=0pt
\item[{\rm (c)}] if all the series in \eqref{eq:AH1} are convergent,
\item[{\rm (ac)}] if all the series in \eqref{eq:AH1} are absolutely convergent,
\item[{\rm (aci)}] if it has property {\rm (ac)} and \eqref{eq:AH} holds,
\item[{\rm (cs)}] if $\sum\limits_{n=0}^\infty c_{2n}s_{2n}<\infty$,
\item[{\rm (cs*)}] if $\sum\limits_{l=0}^\infty c_l|s_{l+m}|<\infty$ for all $m\ge 0$.
\end{enumerate}
\end{defn}

\begin{thm}\label{thm:start} For an indeterminate moment problem the following holds:
\begin{enumerate}\itemsep=0pt
\item[{\rm (i)}] {\rm (cs*)} $\implies$ {\rm (aci)} $\implies$ {\rm (ac)} $\implies$ {\rm (c)},
\item[{\rm (ii)}] {\rm (cs*)} $\implies$ {\rm (cs)}.
\end{enumerate}
\end{thm}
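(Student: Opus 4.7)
The plan is to handle the trivial implications first and then focus on the substantive content, which is (cs*) $\implies$ (aci).

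First I would dispose of the easy parts. The chain (aci) $\implies$ (ac) $\implies$ (c) is immediate from Definition~\ref{thm:defAH}: (aci) already contains (ac), and absolute convergence always implies convergence. For (cs*) $\implies$ (cs), take $m=0$ in the defining inequality of (cs*) to get $\sum_{l\ge 0} c_l |s_l|<\infty$; since even-indexed moments $s_{2n}$ are moments of $x^{2n}\ge 0$ and are therefore non-negative, the subsum over even indices $\sum_n c_{2n} s_{2n}=\sum_n c_{2n}|s_{2n}|$ is bounded by $\sum_l c_l|s_l|$, which is (cs).

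The core of the theorem is (cs*) $\implies$ (aci). The key input is an off-diagonal bound of Cauchy--Schwarz type,
\begin{equation*}
|a_{j,k}|\le c_jc_k,\qquad j,k\ge 0,
\end{equation*}
which I would derive from the reproducing-kernel representation $K(z,w)=\sum_n P_n(z)P_n(w)$. Writing $P_n(z)=\sum_j p_{n,j}z^j$, one reads off $a_{j,k}=\sum_{n\ge 0}p_{n,j}p_{n,k}$ and $a_{j,j}=c_j^2$, so Cauchy--Schwarz in $\ell^2(\mathbb{N}_0)$ applied to the sequences $(p_{n,j})_n$ and $(p_{n,k})_n$ gives the bound. (The same argument applied to the truncated matrix $\mathcal{A}^{(N)}$ gives the uniform bound $|a_{j,k}(N)|\le c_jc_k$, since $\sum_{n=0}^N p_{n,j}^2\le a_{j,j}=c_j^2$.) Combined with (cs*), this yields
\begin{equation*}
\sum_{k=0}^\infty |a_{i,k}|\,|s_{k+j}|\le c_i\sum_{k=0}^\infty c_k|s_{k+j}|<\infty,
\end{equation*}
which is property (ac).

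To upgrade (ac) to (aci), I would pass to the limit in the finite matrix identity $\sum_{k=0}^N a_{i,k}(N)s_{k+j}=\delta_{i,j}$, valid for $0\le i,j\le N$. Extending $a_{i,k}(N)$ by zero for $k>N$, the uniform bound $|a_{i,k}(N)|\le c_ic_k$ together with $\sum_k c_k|s_{k+j}|<\infty$ provides an integrable (summable) dominating function, while $a_{i,k}(N)\to a_{i,k}$ termwise as $N\to\infty$ (from the definition of $a_{i,k}$ as the limit of $a_{i,k}(N)$). Dominated convergence on the counting measure on $\mathbb{N}_0$ then yields, for every fixed $i,j$ and all $N\ge\max(i,j)$,
\begin{equation*}
\sum_{k=0}^\infty a_{i,k}s_{k+j}=\lim_{N\to\infty}\sum_{k=0}^\infty a_{i,k}(N)s_{k+j}=\lim_{N\to\infty}\delta_{i,j}=\delta_{i,j},
\end{equation*}
which together with the already-established (ac) is exactly (aci). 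The main conceptual step is the Cauchy--Schwarz estimate $|a_{j,k}|\le c_jc_k$ and recognising that it holds uniformly for all truncations; after that the argument is a standard dominated-convergence passage to the limit.
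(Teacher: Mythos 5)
Your argument is correct, and the easy implications and (cs*)$\implies$(cs) match the paper. For the substantive step (cs*)$\implies$(aci), however, you take a genuinely different route. The paper (Lemma~\ref{thm:suffAH}) works from the triangular factorizations $\mathcal H=\mathcal C^t\mathcal C$ and $\mathcal A=\mathcal B\mathcal B^t$, observes that $\mathcal I=\mathcal B\mathcal C=\mathcal B\big(\mathcal B^t\mathcal H\big)$ always holds as a product of triangular matrices with finite row/column sums, and then uses the absolute-convergence hypothesis $\sum_{n,l}|b_{k,n}b_{l,n}s_{l+m}|<\infty$ (deduced from (cs*) via the same Cauchy--Schwarz step you use) to justify the associativity rearrangement $\mathcal B\big(\mathcal B^t\mathcal H\big)=\big(\mathcal B\mathcal B^t\big)\mathcal H=\mathcal A\mathcal H$. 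You instead start from the truncated-inverse identity $\mathcal A^{(N)}\mathcal H_N=\mathcal I_N$ (Theorem~1.1 of [B:S1], quoted in the paper), prove the uniform bound $|a_{i,k}(N)|\le c_ic_k$, and pass to the limit $N\to\infty$ by dominated convergence with the summable majorant $c_ic_k|s_{k+j}|$ furnished by (cs*). Both approaches hinge on the same Cauchy--Schwarz estimate (yours phrased as $|a_{j,k}|\le c_jc_k$, the paper's as a bound on the double sum $\sum_{n,l}$), and both are sound. The paper's route is slightly slicker in that it produces the stronger intermediate condition~\eqref{eq:suffAH} and avoids invoking the finite-section theory; yours is perhaps more elementary and makes the ``limit of truncations'' intuition explicit, at the cost of needing the separate pointwise convergence $a_{i,k}(N)\to a_{i,k}$ (which does hold because $\mathcal B$ is Hilbert--Schmidt).
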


Concerning (i), the claim (cs*) $\implies$ (aci) is proved in Lemma~\ref{thm:suffAH}, the rest is obvious. To see (ii) we put $m=0$ in (cs*) and get
\begin{gather*}
\sum_{n=0}^\infty c_{2n}s_{2n}\le\sum_{l=0}^\infty c_l|s_l|<\infty.
\end{gather*}

We shall mainly focus on the case of symmetric moment problems, where the central recurrence coefficients $a_n=0$ for all~$n$, and the moment problem is characterized by the sequence~$(b_n)$.

\begin{rem}\label{thm:notreverse} We are not able to decide if some of these implications can be reversed. Note however, that for Stieltjes problems and for symmetric moment problems the properties (ac) and (aci) are equivalent, cf.\ Propositions~\ref{thm:stieltjesAH} and~\ref{thm:symindetac}.
\end{rem}

We next define some classes of sequences $(b_n)$ which will be considered in connection with the properties of Definition~\ref{thm:defAH}.

\begin{defn}\label{thm:deflogcvcc} A sequence $(b_n)$ of positive numbers is called eventually log-convex if there exists $n_0\in\mathbb N$ such that
\begin{gather*}
b_n^2\le b_{n-1}b_{n+1},\qquad n\ge n_0,
\end{gather*}
and it is called eventually log-concave if the reverse inequality holds. For $n_0=1$ we drop ``eventually''.
\end{defn}

The following lemma is given in \cite[Lemma~4.1]{B:S2}.

\begin{lem}\label{thm:lemBS2} Let $(b_n)$ be an unbounded positive sequence which is either eventually log-convex or eventually log-concave. Then $(b_n)$ is eventually strictly increasing to infinity.
\end{lem}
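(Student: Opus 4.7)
The plan is to work throughout with the ratios $r_n=b_{n+1}/b_n$, reformulate each hypothesis as a monotonicity property of $(r_n)$, and then use unboundedness of $(b_n)$ to force $r_n>1$ eventually.

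First I would observe that log-convexity $b_n^2\le b_{n-1}b_{n+1}$ for $n\ge n_0$ is equivalent to saying $(r_n)$ is non-decreasing for $n\ge n_0-1$, while log-concavity is equivalent to $(r_n)$ being non-increasing in the same range. This reduces the lemma to a statement purely about monotone sequences of positive ratios, and the two cases then become almost dual.

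In the log-convex case, I would argue by contradiction: if $r_n\le 1$ for every $n\ge n_0-1$, then $(b_n)$ is non-increasing from index $n_0-1$ on and therefore bounded, contradicting the hypothesis. Hence there is some $n_1\ge n_0-1$ with $r_{n_1}>1$, and by monotonicity $r_n\ge r_{n_1}>1$ for all $n\ge n_1$. This yields strict increase beyond $n_1$ together with the geometric lower bound $b_n\ge b_{n_1}r_{n_1}^{\,n-n_1}\to\infty$.

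In the log-concave case, the ratios $(r_n)$ are non-increasing for $n\ge n_0$, so I would rule out the possibility $r_{n_1}\le 1$ for some $n_1\ge n_0$. If $r_{n_1}<1$, monotonicity gives $r_n\le r_{n_1}<1$ for $n\ge n_1$, so $(b_n)$ decays geometrically and is bounded; if $r_{n_1}=1$, monotonicity forces $r_n\le 1$ for $n\ge n_1$, so $(b_n)$ is non-increasing and again bounded. Both outcomes contradict unboundedness, so $r_n>1$ for every $n\ge n_0$, giving strict increase, and the monotone unbounded sequence $(b_n)$ then tends to $\infty$.

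I do not anticipate a serious obstacle here; the only delicate point is keeping the index shift between ``$b_n^2\le b_{n-1}b_{n+1}$ for $n\ge n_0$'' and ``$r_n\ge r_{n-1}$ for $n\ge n_0$'' straight so that the starting index $n_0$ is propagated correctly, and making sure to exclude the boundary case $r_n=1$ in the log-concave situation (which is where unboundedness is used twice in the same direction rather than being automatic from strict inequality).
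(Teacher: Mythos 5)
Your argument is correct. Note first that this paper does not itself prove the lemma; it simply cites [B:S2, Lemma~4.1], so there is no in-paper proof to compare against. Your ratio-based argument is the natural route and it closes the gap cleanly: the equivalence ``log-convex $\iff (r_n)$ eventually non-decreasing'' and ``log-concave $\iff (r_n)$ eventually non-increasing'' (with $r_n=b_{n+1}/b_n$) is stated with the correct index shift, the contrapositive use of unboundedness in the log-convex case and the two-subcase exclusion of $r_{n_1}\le 1$ in the log-concave case are both sound, and the final step --- that an eventually strictly increasing sequence which is unbounded must tend to $\infty$ --- is exactly what is needed in the log-concave case where the ratios may tend down to $1$ and no geometric lower bound is available. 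I see no gap.
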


The following concept is crucial for several results.

\begin{defn}\label{thm:defqinc} A sequence $(b_n)$ of positive numbers is called eventually $q$-increasing, if there exist a number $0<q<1$ and $n_0\in\N$ such that
\begin{gather}\label{eq:qinc}
\frac{b_{n-1}}{b_n}\le q,\qquad n\ge n_0.
\end{gather}
For $n_0=1$ we drop ``eventually''.
\end{defn}

Note that if $(b_n)$ is unbounded and eventually log-convex, then it is eventually $q$-increasing with $q=b_{n_0-1}/b_{n_0}<1$, where $n_0$ is so large that $(b_n)$ is log-convex and strictly increasing for $n \ge n_0$.

We recall that an indeterminate moment problem has an order $\rho$ satisfying $0 \le\rho\le 1$. This is the common order of the four functions of the Nevanlinna matrix, see~\cite{B:P}.

\begin{prop}\label{thm:qinc-indet}\quad
\begin{enumerate}\itemsep=0pt
\item[$(i)$] The symmetric moment problem for an eventually $q$-increasing sequence $(b_n)$ is indeterminate and of order~$0$.

\item[$(ii)$] The symmetric moment problem for a positive sequence $(b_n)$ satisfying
\begin{gather}\label{eq:betan}
\frac{b_{n-1}}{b_n}\le {\rm e}^{-\beta/n},\qquad n\ge n_0, \qquad \beta>1,
\end{gather}
is indeterminate of order $\le 1/\beta$.
\end{enumerate}
\end{prop}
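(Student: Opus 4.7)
The approach is to turn the quotient hypothesis into explicit lower bounds on $b_n$ and on $B_n:=b_0b_1\cdots b_{n-1}$, and then to control the orthonormal polynomials $P_n(z)$ and the reproducing kernel diagonal $K(r,r)=\sum P_n(r)^2$ so as to apply the classical criterion for indeterminacy together with the Berg--Pedersen characterization of the order.

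First, I would extract the growth of $b_n$. In (i), the bound $b_{n-1}/b_n\le q$ for $n\ge n_0$ telescopes to $b_n\ge C q^{-n}$ and hence $B_n\ge C^n q^{-n(n-1)/2}$. In (ii),
\begin{gather*}
\log b_n\ge \log b_{n_0}+\beta\sum_{k=n_0+1}^n 1/k =\beta\log n+O(1),
\end{gather*}
giving $b_n\ge c n^\beta$ and, by Stirling, $B_n\ge c^n((n-1)!)^\beta$. In both cases $\sum 1/b_n<\infty$, which is necessary for symmetric indeterminacy.

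Next I would bound $|P_n(z)|$ by iterating the recurrence inequality $|P_{n+1}(z)|\le (|z|/b_n)|P_n(z)|+(b_{n-1}/b_n)|P_{n-1}(z)|$, splitting into two regimes. In (i), for $n\ge N(|z|)$ with $|z|/b_n\le (1-q)/2$ the bounding two-term linear recursion has characteristic roots strictly inside the open unit disk, so $|P_n(z)|\le C(|z|)\lambda^n$ with $\lambda<1$; for $n\le N(|z|)$ one uses the leading-coefficient estimate $|P_n(z)|\le C r^n/B_n$ coming from \eqref{eq:lead} (with lower-order coefficients absorbed into a polynomial factor in $n$, using that the real zeros of the symmetric $P_n$ stay bounded relative to $r$ for large $r$). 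Summing gives $\sum|P_n(z)|^2<\infty$ for every $z\in\C$, so the moment problem is indeterminate. In (ii) the bounding recursion is only slowly varying since $b_{n-1}/b_n\to 1$, so I would instead compare term by term with the Mittag-Leffler-like series $\sum |z|^{2n}/(c^{2n}((n-1)!)^{2\beta})$, which converges for every $z\in\C$ thanks to $\beta>1$; this yields the required summability and hence indeterminacy.

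Finally, for the order I would use the Berg--Pedersen identity $\rho=\limsup_{r\to\infty}\log\log K(r,r)/\log r$ applied to $K(r,r)=\sum P_n(r)^2$. Plug in the two-regime bound $P_n(r)^2\le C r^{2n}/B_n^2$ and maximize in $n$. In (i) the maximizer is $n\sim\log r/\log(1/q)$, the maximal term equals $\exp(O((\log r)^2))$, and the geometric tail contributes at most a constant multiple, so $\log K(r,r)=O((\log r)^2)$ and $\rho=0$. In (ii) the maximizer is $n\sim (r/c)^{1/\beta}$, Stirling gives maximal term $\exp(O(r^{1/\beta}))$, and the Mittag-Leffler comparison controls the tail, yielding $\log K(r,r)=O(r^{1/\beta})$ and $\rho\le 1/\beta$. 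The main obstacle is part (ii): because $b_{n-1}/b_n\to 1$ one cannot run the clean fixed-ratio contraction argument of (i), and the tail of $\sum|P_n(z)|^2$ must be dominated by the Mittag-Leffler comparison rather than by an exponential decay; a secondary issue is verifying that the leading-coefficient bound $|P_n(z)|\le Cr^n/B_n$ is sharp enough in the moderate-$n$ regime, which relies on the real symmetric location of the zeros of $P_n$.
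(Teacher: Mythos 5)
Your route is genuinely different from the paper's: you try to estimate $|P_n(z)|$ globally and apply the Berg--Pedersen reproducing-kernel formula $\rho=\limsup_r \log\log K(r,r)/\log r$, whereas the paper works only at the single point $z=0$. It invokes \cite[Theorem~1.2]{B:S2}, which reduces both indeterminacy and the order bound to showing $\big(P_n^2(0)\big),\big(Q_n^2(0)\big)\in\ell^\alpha$ (for all $\alpha\le 1$ in case (i), for all $\alpha>1/\beta$ in case (ii)), and then uses the closed forms $P_{2n}(0)=(-1)^n\prod b_{2j-2}/b_{2j-1}$, $Q_{2n+1}(0)=(-1)^n\prod b_{2j-1}/b_{2j}$ so that the quotient hypothesis telescopes immediately into a geometric bound (case (i)) or an $n^{-\beta}$ bound (case (ii)). This bypasses entirely the global control of $P_n$ that your argument needs.

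As written, your argument has two real gaps. First, the ``moderate-$n$'' bound $|P_n(z)|\le C\,r^n/B_n$ does not follow from \eqref{eq:lead} alone: the lower-order coefficients of $P_n$ are not polynomially comparable to the leading one (e.g.\ $P_{2n}(0)B_{2n}=\prod b_{2j-2}^2\to\infty$). What is actually needed is that the zeros $x_{k,n}$ of $P_n$ satisfy $|x_{k,n}|\lesssim r$, which requires bounding $\|J_n\|\le 2\max_{k<n}b_k$ and restricting $n$ so that $b_{n-1}\lesssim r$; you gesture at this but it must be made explicit, and it only covers the range $n\lesssim\log r/\log(1/q)$ in (i) and $n\lesssim r^{1/\beta}$ in (ii). Second, and more seriously, the tail argument in (ii) does not work in the form you sketch. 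Since $b_{n-1}/b_n\to 1$, the majorizing recursion $|P_{n+1}|\le(|z|/b_n)|P_n|+(b_{n-1}/b_n)|P_{n-1}|$ degenerates for large $n$ to essentially $|P_{n+1}|\lesssim \varepsilon|P_n|+|P_{n-1}|$, which produces no decay at all; a ``term-by-term comparison with $\sum |z|^{2n}/(c^{2n}((n-1)!)^{2\beta})$'' cannot be fed through this recursion, so you have no control on $|P_n(z)|$ for $n\gtrsim r^{1/\beta}$. This is precisely the regime that dominates the kernel sum, so both the summability (indeterminacy) and the order bound in (ii) are left unproved. The paper's $\ell^\alpha$-criterion at $0$ is what lets one sidestep this difficulty.
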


\begin{proof} Let $(Q_n)$ denote the sequence of polynomials of the second kind. By \cite[Theorem~1.2]{B:S2}, it suffices to prove that $\big(P_n^2(0)\big), \big(Q_n^2(0)\big)\in\ell^\alpha$ for all $0<\alpha\le 1$ in the first case and for all $\alpha>1/\beta$ in the second case.

 By the symmetry assumption we know that $P_{2n+1}(0)=Q_{2n}(0)=0$ and by \cite[Remark 4.5]{B:S2} we have
\begin{gather}\label{eq:P2n(0)}
P_{2n}(0)=(-1)^n\frac{b_0b_2\cdots b_{2n-2}}{b_1b_3\cdots b_{2n-1}},\qquad Q_{2n+1}(0)=(-1)^n\frac{b_1b_3\cdots b_{2n-1}}{b_0b_2\cdots b_{2n}}.
\end{gather}
For $n>n_0$ such that \eqref{eq:qinc} holds, we have
\begin{gather*}
P_{2n}^2(0)\le \left(\frac{b_0b_2\cdots b_{2n_0-2}}{b_1b_3\cdots b_{2n_0-1}}\right)^2q^{2(n-n_0)},\qquad
Q_{2n+1}^2(0)\le \left(\frac{b_1b_3\cdots b_{2n_0-1}}{b_0b_2\cdots b_{2n_0}}\right)^2 q^{2(n-n_0)},
\end{gather*}
and the first assertion follows.

For $n>n_0$ such that \eqref{eq:betan} holds, we have
\begin{gather*}
P_{2n}^2(0)\le \left(\frac{b_0b_2\cdots b_{2n_0-2}}{b_1b_3\cdots b_{2n_0-1}}\right)^2\exp\left(-2\beta\sum_{j=n_0}^{n-1}\frac{1}{2j+1}\right),
\end{gather*}
and a similar inequality for $Q_{2n+1}^2(0)$. Using the inequality
\begin{gather*}
\sum_{j=n_0}^{n-1}\frac{1}{2j+1}\ge \int_{n_0}^n\frac{{\rm d}x}{2x+1}=\frac12\log\frac{2n+1}{2n_0+1},
\end{gather*}
we get
\begin{gather*}
P_{2n}^2(0)\le \frac{C}{(2n+1)^\beta}
\end{gather*}
for a suitable constant $C$, and the second assertion follows.
\end{proof}

The following examples have $q$-increasing coefficients $b_n$.

\begin{ex}\label{thm:ops1} The discrete $q$-Hermite polynomials of type II are given by
\begin{gather*}
x\tilde{h}_n(x;q)=\tilde{h}_{n+1}(x;q)+q^{-2n+1}\big(1-q^n\big)\tilde{h}_{n-1}(x;q),
\end{gather*}
cf.~\cite{K:S}, where $0<q<1$. This is the monic version and the corresponding coefficients for the orthonormal version are
\begin{gather*}
b_n=q^{-n-1/2}\sqrt{1-q^{n+1}}.
\end{gather*}
It follows that
\begin{gather*}
\frac{b_{n-1}}{b_n}=q\sqrt{\frac{1-q^n}{1-q^{n+1}}}<q,
\end{gather*}
so $(b_n)$ is $q$-increasing and log-concave.

It is known that $s_{2n}=\big(q;q^2\big)_nq^{-n^2}$, where the shifted factorials are defined in \eqref{eq:qshift}.

In \cite{C:I} Christiansen and Ismail studied symmetric Al-Salam--Chihara polynomials of type II given by the recursion
\begin{gather*}
2xp^nQ_n(x;\beta)=\big(1-p^{n+1}\big)Q_{n+1}(x;\beta)+\big(\beta+p^{n-1}\big)Q_{n-1}(x;\beta),
\end{gather*}
where $0<p<1$, $\beta\ge 0$. The recurrence coefficients for the orthonormal version are
\begin{gather*}
b_n=\tfrac{1}{2}p^{-n-1/2}\big[\big(1-p^{n+1}\big)\big(\beta+p^n\big)\big]^{1/2}.
\end{gather*}
It follows that
\begin{gather*}
\frac{b_{n-1}}{b_n}=p\left[\frac{1-p^n}{1-p^{n+1}}\frac{\beta+p^{n-1}}{\beta+p^n}\right]^{1/2}\le p\left[\frac{\beta+1}{\beta+p}\right]^{1/2}\le\sqrt{p},
\end{gather*}
so $(b_n)$ is $\sqrt{p}$-increasing.

One can prove that $(b_n)$ is eventually log-convex when $0<\beta<1/p$ and eventually log-concave when $\beta\ge 1/p$ or $\beta=0$. The case $\beta=0$ yields the $p^{-1}$-Hermite polynomials studied by Ismail and Masson in \cite{I:M}.
\end{ex}

We shall now formulate our main results for symmetric moment problems given in terms of $a_n=0$ and $b_n>0$ from \eqref{eq:rec}. Our results are based on a study of two quantities $U_n$, $V_n$ defined in Section~\ref{section2}, see~\eqref{eq:Un} and~\eqref{eq:Vn}. As far as we know these quantities have not been investigated previously. They are treated in Sections~\ref{section3} and~\ref{section4}, and to avoid duplication we shall not mention these technical results here.

We need the following decreasing functions
\begin{gather}\label{eq:ua}
u(\alpha):=\max_{2^{-1/\alpha}\le x\le 1} (x+1)\int_x^1\log\left(\frac{y^\alpha}{1-y^\alpha}\right)\frac{{\rm d}y}{(1+y)^2},\qquad \alpha>0, \\
\label{eq:va}
v(\alpha):=\int_1^\infty \log\big(\big[1-x^{-\alpha}\big]^{-1}\big){\rm d}x,\qquad \alpha>1, \\
\label{eq:ka}
k(\alpha):=4^{-\alpha}\exp(2u(\alpha)+v(\alpha)),\qquad \alpha>1,
\end{gather}
which are motivated by Theorems~\ref{thm:thm2},~\ref{thm:genlogcc}, and~\ref{thm:evtlogconcave} respectively. The limiting values of these functions at the end-points of their interval of definition are also given there. Further properties of the functions $u$, $v$ are given in Appendix~\ref{appendixA}.

\begin{thm}\label{thm:Main3} Consider a symmetric moment problem given in terms of an eventually $q$-increasing sequence $(b_n)$, cf.\ Definition~{\rm \ref{thm:defqinc}}. We then have
\begin{enumerate}\itemsep=0pt
\item[$(i)$] $(U_n)$, $(V_n)$, $\big(c_n\sqrt{s_{2n}}\big)$ are bounded sequences.
\item[$(ii)$] Property {\rm (cs*)} holds.
\end{enumerate}
\end{thm}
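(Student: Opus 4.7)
My plan is to extract both parts from the technical estimates on $U_n$ and $V_n$ developed in Sections~\ref{section3} and~\ref{section4}, which are designed precisely to convert structural hypotheses on $(b_n)$ into concrete statements about the diagonal coefficients $c_n$ of the reproducing kernel.

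For part~(i), I would iterate the hypothesis $b_{n-1}/b_n\le q$ to obtain $b_{n-k}/b_n\le q^k$ whenever $n-k\ge n_0$. Any weighted sum of products of ratios $b_j/b_k$ with $k>j$ is then dominated, up to an initial block depending only on $b_0,\dots,b_{n_0-1}$, by a convergent geometric series in $q$, uniformly in the free index~$n$. Applied to the expressions in~\eqref{eq:P2n(0)} this is exactly the mechanism behind the proof of Proposition~\ref{thm:qinc-indet}; applied to the Section~\ref{section2} definitions of $U_n$ and $V_n$ it gives their uniform boundedness. Combining this with the Section~\ref{section2} identification of $c_n\sqrt{s_{2n}}$ in terms of $U_n$ and $V_n$ then yields the boundedness of $(c_n\sqrt{s_{2n}})$.

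For part~(ii) I would apply the same bookkeeping more sharply to the series $\sum_l c_l|s_{l+m}|$ directly. It is worth emphasizing that the naive Cauchy--Schwarz reduction $|s_{l+m}|\le \sqrt{s_{2l}\,s_{2m}}$ cannot succeed here: from the leading-coefficient formula~\eqref{eq:lead} one gets $c_k^2\ge (b_0b_1\cdots b_{k-1})^{-2}$, while the expansion of $x^k$ in the orthonormal basis gives $s_{2k}\ge (b_0b_1\cdots b_{k-1})^2$, so $c_k\sqrt{s_{2k}}\ge 1$ and $\sum_l c_l\sqrt{s_{2l}}$ is certainly divergent. Instead I would estimate $c_l|s_{l+m}|$ directly by writing each factor as a weighted sum of products of $b_j$'s and then applying the geometric bound $b_{n-k}/b_n\le q^k$: the surplus $q$-power supplied by the iterated inequality beats the growth of $s_{l+m}$, and the series is termwise majorized by a convergent geometric series, proving~(cs*).

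The main obstacle is the algebraic bookkeeping for $U_n$ and $V_n$, whose explicit form lives in Section~\ref{section2}: one has to check that every ratio of products appearing in their definitions can be rearranged so that the iterated $q$-increasing inequality produces a convergent factor uniformly in the free indices, and similarly for the terms in $\sum_l c_l|s_{l+m}|$. Once that is settled, both the boundedness in~(i) and the summability in~(ii) drop out of straightforward comparisons with the geometric series $\sum_k q^k$.
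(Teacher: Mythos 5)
Your plan for part~(i) matches the paper's proof exactly. Boundedness of $(U_n)$ and of $(V_n)$ under the eventually $q$-increasing hypothesis is precisely what Theorems~\ref{thm:evtlogconvex} and~\ref{thm:Vevtlogconv} establish, via the recurrences~\eqref{eq:u} and~\eqref{eq:vkl1} for $u_{n,k}$ and $v_{k,l}$ together with the inductive bounds $u_{n,k}\le q^{k^2}/(q^2;q^2)_k$ and $v_{k,l}\le q^l/(q^2;q^2)_l$; and the third boundedness claim is then Corollary~\ref{thm:VnsqrtUn}, using the immediate identity $c_n\sqrt{s_{2n}}=V_n\sqrt{U_n}$ from~\eqref{eq:Un} and~\eqref{eq:Vn}. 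Your remark about the Cauchy--Schwarz trap is exactly the paper's Remark~\ref{thm:c-sqrt} and Lemma~\ref{thm:Un}.

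For part~(ii) your high-level idea is also the paper's, but the wording ``writing each factor as a weighted sum of products of $b_j$'s'' misses the one simplification that makes Theorem~\ref{thm:AH-logcv2} short. Rather than re-expanding $c_l$ and $s_{l+m}$ separately, the paper uses the factorization~\eqref{eq:UnVn}: in the symmetric case the only terms in $\sum_l c_l|s_{l+m}|$ are $c_{2n-j}s_{2n}$, and for $n\ge j+1$
\begin{equation*}
c_{2n-j}\,s_{2n}=V_{2n-j}\,U_n\cdot\frac{b_0b_1\cdots b_{n-1}}{b_n b_{n+1}\cdots b_{2n-j-1}},
\end{equation*}
so part~(i) reduces~(cs*) to estimating a single product of $b$-ratios. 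Iterating the $q$-increasing inequality (after absorbing the finite initial block up to $n_0$) gives
\begin{equation*}
\frac{b_0\cdots b_{n-1}}{b_n\cdots b_{2n-j-1}}\le C\,q^{(n-n_0)^2},
\end{equation*}
a superexponential decay, so $(c_{2n-j}s_{2n})^{1/n}\to 0$ and the series converges for every $j$. There is no genuine gap in your plan, but when you flesh it out you should state~\eqref{eq:UnVn} explicitly: it is what turns the diffuse ``bookkeeping'' into a one-line estimate, and it is the reason the boundedness in part~(i) is not merely parallel to part~(ii) but is directly used in its proof.
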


This result follows from Theorems~\ref{thm:evtlogconvex} and~\ref{thm:Vevtlogconv}, Corollary~\ref{thm:VnsqrtUn} and Theorem~\ref{thm:AH-logcv2}.

We next relax the condition of $(b_n)$ being eventually $q$-increasing.

\begin{thm}\label{thm:Main4} Consider a symmetric moment problem given in terms of $(b_n)$ satisfying
\begin{gather*}
\frac{b_{n-1}}{b_n}\le {\rm e}^{-f(n)},\qquad n\ge n_0,
\end{gather*}
where $f(n)>0$ for $n\ge n_0$ and $\alpha:=\liminf\limits_{n\to\infty} nf(n)\in [0,\infty]$.

We then have
\begin{enumerate}\itemsep=0pt
\item[$(i)$] $\limsup\limits_{n\to\infty}\root{n}\of{U_n}\le {\rm e}^{2u(\alpha)}$, $0\le \alpha\le\infty$,
\item[$(ii)$] $\limsup\limits_{n\to\infty}\root{n}\of{V_n}\le {\rm e}^{v(\alpha)/2}$, $1\le \alpha\le\infty$,
\item[$(iii)$] Property {\rm (cs*)} holds if $k(\alpha)<1$, which in turn holds for $\alpha\ge 1.68746$, cf.\ Remark~{\rm \ref{thm:rem4}}.
\end{enumerate}
\end{thm}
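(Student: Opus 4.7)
The plan is to mimic the strategy used for Theorem~\ref{thm:Main3}, but replacing the uniform geometric bound $b_{n-1}/b_n\le q$ by the pointwise bound $b_{n-1}/b_n\le {\rm e}^{-f(n)}$ and extracting Riemann-sum asymptotics from $\liminf nf(n)=\alpha$. The key initial observation is that for any $0<\alpha'<\alpha$ one has $f(n)\ge \alpha'/n$ for $n$ large, and hence by iteration
\begin{gather*}
\frac{b_k}{b_n}\le \exp\Biggl(-\sum_{j=k+1}^n f(j)\Biggr)\le C\,(k/n)^{\alpha'},\qquad k\le n,
\end{gather*}
while for $k\ge n$ the inequality reverses, giving $b_n/b_k\le C\,(n/k)^{\alpha'}$. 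These two elementary estimates are the bridge between the hypothesis and the integrals \eqref{eq:ua} and \eqref{eq:va}.

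For part (i), I would substitute the above bound into the defining sum \eqref{eq:Un} for $U_n$. Writing $j=\lfloor ny\rfloor$ and setting $x=k/n$ for the cutoff of the summation range, $\tfrac1n\log U_n$ becomes a Riemann sum for $(x+1)\int_x^1\log\bigl(y^{\alpha'}/(1-y^{\alpha'})\bigr)(1+y)^{-2}{\rm d}y$, the factor $(x+1)$ and $(1+y)^{-2}$ coming from the geometric tail in $U_n$ left over after peeling off the finite initial segment. Taking the worst-case cutoff $x\in[2^{-1/\alpha'},1]$ (the lower bound is forced by where the logarithm is non-negative) and then letting $\alpha'\to\alpha$ yields $\limsup\sqrt[n]{U_n}\le{\rm e}^{2u(\alpha)}$. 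Part (ii) is analogous: $V_n$ is controlled by a tail sum over $k\ge n$, the change of variable $k=nx$ gives a Riemann sum on $[1,\infty)$, and $b_n/b_k\le x^{-\alpha'}$ feeds directly into the integrand defining $v(\alpha)$; the factor $1/2$ in the exponent $v(\alpha)/2$ reflects that $V_n$ enters our later bounds only through $\sqrt{V_n}$.

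For part (iii), the bounds from (i) and (ii) combine with the technical estimates on $c_n\sqrt{s_{2n}}$ obtained earlier (in the symmetric case, $c_n\sqrt{s_{2n}}$ is controlled by a product $\sim\sqrt{U_nV_n}$, a relation proved along the lines of Corollary~\ref{thm:VnsqrtUn}) to give $\limsup(c_n\sqrt{s_{2n}})^{1/n}\le {\rm e}^{u(\alpha)+v(\alpha)/2}$. To deduce (cs*) one has to sum $c_l|s_{l+m}|$; using the Cauchy--Schwarz-type bound $|s_{l+m}|\le\sqrt{s_{2l}\,s_{2m}}$ and the fact that successive even moments in the symmetric case grow like $s_{2l}\sim (b_0b_1\cdots b_{l-1})^2$, together with $b_{l-1}/b_l\le 1$ and a doubling-index estimate, one picks up an extra factor $4^{-\alpha l}$ in the geometric ratio. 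The upshot is $c_l|s_{l+m}|^{1/l}\to k(\alpha)^{1/2}$ type behavior, and more precisely the series $\sum c_l|s_{l+m}|$ is dominated by a geometric series of ratio $k(\alpha)$; hence $k(\alpha)<1$ suffices. The numerical threshold $\alpha\ge 1.68746$ is obtained by solving $k(\alpha)=1$ numerically, a computation deferred to Remark~\ref{thm:rem4}.

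The main obstacle I expect is part (iii): tracking the precise constant $4^{-\alpha}$ requires carefully identifying which moment growth is really relevant (i.e.~$s_{2(l+m)}$ versus $\sqrt{s_{2l}s_{2m}}$) and matching it against the $U_n,V_n$ bounds in the exact power in which they enter $c_n$. A secondary technical issue is ensuring the Riemann-sum convergences in (i) and (ii) are uniform enough in $\alpha'$ that we may let $\alpha'\nearrow\alpha$ after taking $\limsup$; this should follow from monotonicity of the integrands in $\alpha$, but it must be checked for the $u(\alpha)$-integral, where the max over $x$ is also $\alpha$-dependent.
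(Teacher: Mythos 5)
Your high-level outline for parts (i) and (ii) — comparison to the model sequence $b_n\sim(n+1)^\beta$, then a Riemann-sum analysis — is in the right direction, but you have suppressed the step that actually produces the Riemann sums, namely the explicit product bounds $u_{n,k}\le\prod_{j=1}^k b_{n-2k+j-1}/(b_{n-j}-b_{n-2k+j-2})$ and $v_{k,l}\le\prod_{j=1}^l b_{k+2j-2}/(b_{k+2j-1}-b_{k-1})$ from Propositions~\ref{thm:unk-logconcave} and Theorem~\ref{thm:vkl-logcc}. Plugging the pointwise estimate $b_j/b_n\lesssim(j/n)^{\alpha'}$ directly into the ``defining sum'' $U_n=\sum_k u_{n,k}^2$ does not produce a sum of logarithms; the logarithmic Riemann sum comes from taking the log of the product bound for a single $u_{n,k}$ at the maximizing index $k_n$, and the factors $(x+1)$ and $(1+y)^{-2}$ in $u(\alpha)$ arise from the substitution $y=(\gamma+x)/(\gamma+2-x)$ with $\sigma=\gamma/(\gamma+2)$ after parametrizing $n=k_n(\gamma_n+2)$ — not from ``the geometric tail in $U_n$''. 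Similarly, your explanation of the $1/2$ in ${\rm e}^{v(\alpha)/2}$ is wrong: $V_n$ is not entering any later bound through $\sqrt{V_n}$; the $1/2$ comes from $V_{2u}$ being the square root of $\sum_l v_{2u,l}^2$ while the analysis is carried out with respect to $u=n/2$, yielding an exponent of $\tfrac14\cdot 2v(\beta)$. These are not cosmetic points: without the product bounds, your Riemann sum has no integrand.

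The more serious gap is in part (iii). You propose to use the Cauchy--Schwarz bound $|s_{l+m}|\le\sqrt{s_{2l}s_{2m}}$ and then argue that $c_l\sqrt{s_{2l}}$ carries a geometric factor $4^{-\alpha l}$. This cannot work: the paper points this out explicitly in Remark~\ref{thm:c-sqrt}, because Lemma~\ref{thm:Un}$(ii)$ shows $c_l\sqrt{s_{2l}}=V_l\sqrt{U_l}\ge1$ for every indeterminate moment problem, so the series $\sum_l c_l\sqrt{s_{2l}}$ always diverges. The exponential factor $4^{-\alpha}$ is not hiding in $c_l\sqrt{s_{2l}}$; it comes only from the asymmetric factorization used in the paper (equations~\eqref{eq:series} and \eqref{eq:UnVn}): since the odd moments vanish, (cs*) reduces to the convergence of $\sum_n c_{2n-j}s_{2n}$, and
\begin{gather*}
c_{2n-j}s_{2n}=V_{2n-j}\,U_n\,\frac{b_0b_1\cdots b_{n-1}}{b_n b_{n+1}\cdots b_{2n-j-1}}.
\end{gather*}
The last quotient, with indices in the numerator running to $n-1$ and in the denominator from $n$ to $2n-j-1$, is what produces $\limsup_n(\cdot)^{1/n}\le 4^{-\alpha}$ via the estimate $\sum_{k=1}^{n-n_1}[\log(n+k)-\log(n_1+k)]\ge 2n\log 2+{\rm o}(n)$. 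Nothing of this form appears in your argument; the Cauchy--Schwarz route cannot supply it. Your ``doubling-index estimate'' gestures toward the right phenomenon, but the bound you actually write down has already thrown away the decay needed, so the step would fail.
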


This result follows from Theorems~\ref{thm:thm2a},~\ref{thm:thm2b}, and~\ref{thm:evtlogconcave}.

The last theorem applied to the symmetric indeterminate moment problem with $b_n=(n+1)^c$, $c>1$, yields that~(cs*) holds for $c\ge 1.68746$. This is not optimal, because in Section~\ref{section7} we prove by completely different methods that~(cs*) actually holds for $c>3/2$ and that property (cs) (and hence (cs*)) does not hold for $1<c<3/2$. The behaviour for the border case $c=3/2$ is not known.

A main difficulty is that almost nothing is known about the orthonormal polynomials and related quantities in case $b_n=(n+1)^c$, see the preamble to Section~\ref{section7}. We have relied on the symmetrized version of a birth-and-death process with cubic rates studied by Valent in~\cite{Va98}. In Section~\ref{section6} we find an estimate of the moments and explicit expressions for the entries of the matrix~$\mathcal A$ in that case.

A symmetric moment problem given by an eventually log-convex or eventually log-concave sequence $(b_n)$ is indeterminate if and only if the Carleman condition $\sum 1/b_n<\infty$ holds. In the indeterminate case the order $\rho$ of the moment problem is equal to the exponent of convergence~$\mathcal E(b_n)$ of~$(b_n)$ defined as
\begin{gather*}
\mathcal E(b_n)=\inf\left\{\alpha>0 \colon \sum_{n=0}^\infty \frac{1}{b_n^\alpha}<\infty\right\}.
\end{gather*}
See Theorems 1.4 and~4.11 in~\cite{B:S2}. The log-concave case builds on technique introduced by Berezanski\u{\i}, see~\cite[p.~26]{Ak}.

In Appendix~\ref{appendixA} we take up certain questions related to the above, but which are not needed in the main results. We prove that $\liminf_n U_n^{1/n}\ge 4$ for all bounded sequences~$(b_n)$. In Remark~\ref{thm:nonuniq2} we notice that although~$\mathcal A$ can be an inverse of~$\mathcal H$, it is not uniquely determined by being so. We also give some properties of the functions~$u$,~$v$.

\section{Preliminaries}\label{section2}
In this section we shall summarize some notation and results from our papers \cite{B:S1,B:S2,B:S3}, which will be needed in the following. We use the following notation for the orthonormal polynomials
\begin{gather}
P_n(x)=b_{n,n}x^n+b_{n-1,n}x^{n-1}+\cdots + b_{1,n}x+b_{0,n},\nonumber\\
x^n=c_{n,n}P_n(x)+c_{n-1,n}P_{n-1}(x)+\cdots + c_{1,n}P_1(x)+c_{0,n}P_0(x).\label{eq:x}
\end{gather}
By (\ref{eq:lead}) we get
\begin{gather}\label{eq:b_n} b_{n,n}={1\over b_0b_1\cdots b_{n-1}},\qquad c_{n,n}=b_0b_1\cdots b_{n-1}.
\end{gather}
The matrices $\mathcal B=\{b_{i,j}\}$ and $\mathcal C=\{c_{i,j}\}$ with the assumption
\begin{gather*}
b_{i,j}=c_{i,j}=0\qquad \text{for} \quad i>j
\end{gather*}
 are upper-triangular. Since $\mathcal B$ and $\mathcal C$ are transition matrices between two sequences of linearly independent systems of functions, we have
\begin{gather*}
\mathcal B\mathcal C=\mathcal C\mathcal B=\mathcal I.
\end{gather*}
Observe that (for say $m\le n$)
\begin{gather*}s_{m+n} = h_{m,n}=\big\langle x^m,x^n\big\rangle_{L^2(\mu)} = c_{0,m}c_{0,n}+c_{1,m}c_{1,n}+\cdots +c_{m,m}c_{m,n} =\big(\mathcal C^t \mathcal C\big)_{m,n},
\end{gather*}
so $\mathcal{H}=\mathcal C^t \mathcal C$.

Hence we have
\begin{gather}\label{eq:I=B(B^tH)}
\mathcal I=\mathcal B\mathcal C=\mathcal B\big(\mathcal B^t\mathcal C^t\mathcal C\big)=\mathcal B\big(\mathcal B^t\mathcal{H}\big).
\end{gather}
The multiplication is well defined as $\mathcal B$, $\mathcal C$ are upper-triangular matrices and $\mathcal B^t$, $\mathcal C^t$ are lower-triangular, so we are always dealing with finite sums in the matrix products.

Let us now assume that the moment problem is indeterminate. We recall from \cite[Proposition~4.2]{B:S1} that $\mathcal B$ is a Hilbert--Schmidt matrix and that \begin{gather}\label{eq:A=BBt}
\mathcal{A}=\mathcal B\mathcal B^t,
\end{gather}
 This means
\begin{gather}\label{eq:akl}
a_{k,l}=\sum^\infty_{n=\max(k,l)}b_{k,n}b_{l,n},
\end{gather}
hence
\begin{gather*}
a_{k,l}^2\le \left (\sum_{n=k}^\infty b_{k,n}^2\right ) \left (\sum_{n=l}^\infty b_{l,n}^2\right ).\end{gather*}
Let us recall some of the arguments leading to these results. Because of indeterminacy it
is known that the series
\begin{gather*}
\sum_{n=0}^\infty |P_n(z)|^2
\end{gather*}
 is convergent uniformly for $z$ in bounded subsets of $\mathbb{C}$. Moreover, the sum has subexponential growth by a result of M. Riesz, cf. \cite[p. 56]{Ak}, i.e., it is dominated by a multiple of ${\rm e}^{\varepsilon |z|}$ for any $\varepsilon>0$.
Fixing $r>0$, we have
\begin{gather*}
P_n\big(r{\rm e}^{{\rm i}t}\big)=\sum_{k=0}^n b_{k,n}r^k{\rm e}^{{\rm i}tk}.
\end{gather*}
By Parseval's identity we obtain
\begin{gather*}
{1\over 2\pi}\int_0^{2\pi}\big|P_n\big(r{\rm e}^{{\rm i}t}\big)\big|^2{\rm d}t = \sum_{k=0}^n b_{k,n}^2r^{2k}.
\end{gather*}
 Next
\begin{gather*}
{1\over 2\pi}\int_0^{2\pi}\sum_{n=0}^\infty \big|P_n\big(r{\rm e}^{{\rm i}t}\big)\big|^2{\rm d}t=\sum_{n=0}^\infty\sum_{k=0}^n b_{k,n}^2r^{2k}=\sum_{k=0}^\infty r^{2k}\sum_{n=k}^\infty b_{k,n}^2.
\end{gather*}
Therefore, denoting
\begin{gather}\label{eq:c_k}
c_k=\left (\sum_{n=k}^\infty b_{k,n}^2\right )^{1/2}=\sqrt{a_{k,k}},
\end{gather}
 we have
\begin{gather*}\label{eq:r}
\sum_{k=0}^\infty c_k^2r^{2k}={1\over 2\pi}\int_0^{2\pi}\sum_{n=0}^\infty \big|P_n\big(r{\rm e}^{{\rm i}t}\big)\big|^2{\rm d}t.
\end{gather*}
(For $r=1$ this proves that $\mathcal B$ is Hilbert--Schmidt, and in particular the series in~\eqref{eq:akl} is absolutely convergent.)

\begin{lem}\label{thm:suffAH} Consider an indeterminate moment problem. If
\begin{gather}\label{eq:suffAH}
\sum_{n,l=0}^\infty |b_{k,n}b_{l,n}s_{l+m}|<\infty\qquad \mbox{for all}\quad k,m\ge 0,
\end{gather}
then property {\rm (aci)} from Definition~{\rm \ref{thm:defAH}} holds.

For \eqref{eq:suffAH} to hold it suffices that
\begin{gather}\label{eq:suffAH1}
\sum_{l=0}^\infty c_l |s_{l+m}|<\infty\qquad\mbox{for all}\quad m\ge 0.
\end{gather}
In particular {\rm (cs*)} implies {\rm (aci)}.
\end{lem}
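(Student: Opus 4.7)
The plan is to compute $\sum_k a_{i,k} s_{k+j}$ by substituting the series representation \eqref{eq:akl} for $a_{i,k}$, interchanging the order of summation under the absolute-convergence hypothesis, and recognizing the inner sum as an $L^2(\mu)$ inner product that collapses via orthogonality to the entries of $\mathcal{B}\mathcal{C} = \mathcal{I}$.

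More precisely, starting from $a_{i,k}=\sum_{n\ge\max(i,k)} b_{i,n} b_{k,n}$, the double series
\[
\sum_{k=0}^\infty\sum_{n=0}^\infty b_{i,n} b_{k,n} s_{k+j}
\]
is absolutely convergent for every fixed $i,j$ by assumption \eqref{eq:suffAH} (with $k\to i$, $l\to k$, $m\to j$). Fubini's theorem then allows me to first sum over $k$. Since $P_n(x)=\sum_{k=0}^n b_{k,n} x^k$, the inner sum is
\[
\sum_{k=0}^\infty b_{k,n} s_{k+j} = \sum_{k=0}^n b_{k,n}\langle x^k,x^j\rangle_{L^2(\mu)} = \langle P_n,x^j\rangle_{L^2(\mu)}.
\]
Expanding $x^j=\sum_{m=0}^j c_{m,j} P_m$ as in \eqref{eq:x} and using orthonormality of the $P_m$, this inner product equals $c_{n,j}$ for $n\le j$ and vanishes for $n>j$. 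Consequently
\[
\sum_{k=0}^\infty a_{i,k} s_{k+j}=\sum_{n=0}^\infty b_{i,n}\langle P_n,x^j\rangle=\sum_{n=0}^{j} b_{i,n} c_{n,j}=(\mathcal B\mathcal C)_{i,j}=\delta_{i,j},
\]
which gives (aci) once I also check (ac), i.e.\ $\sum_k|a_{i,k}s_{k+j}|<\infty$. But the triangle inequality in \eqref{eq:akl} together with Cauchy--Schwarz gives $|a_{i,k}|\le c_i c_k$, and then (ac) follows from the same bound that will establish~\eqref{eq:suffAH}; see below.

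For the second assertion I would bound, using $|b_{k,n}|^2\le\sum_{n\ge k}b_{k,n}^2=c_k^2$ and Cauchy--Schwarz over $n$,
\[
\sum_{n=0}^\infty |b_{k,n}b_{l,n}|\le \Bigl(\sum_n b_{k,n}^2\Bigr)^{1/2}\Bigl(\sum_n b_{l,n}^2\Bigr)^{1/2}=c_k c_l,
\]
whence
\[
\sum_{n,l=0}^\infty |b_{k,n}b_{l,n}s_{l+m}|\le c_k\sum_{l=0}^\infty c_l|s_{l+m}|,
\]
which is finite by \eqref{eq:suffAH1}. The final clause is then immediate: \eqref{eq:suffAH1} is precisely the statement of (cs*).

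There is no serious obstacle here; the content is bookkeeping. The only care needed is to invoke \eqref{eq:suffAH} \emph{before} interchanging the two summations and to verify (ac) separately (which, as noted, drops out of the $|a_{i,k}|\le c_ic_k$ estimate). No convergence of the series $\sum_n \|P_n\|^2$-type is needed beyond what is already encoded in the Hilbert--Schmidt property of $\mathcal B$ recorded in \eqref{eq:A=BBt}.
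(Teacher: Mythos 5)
Your argument is in substance the paper's: computing $\sum_k b_{k,n}s_{k+j}=\langle P_n,x^j\rangle_{L^2(\mu)}=c_{n,j}$ is precisely the matrix identity $\mathcal B^t\mathcal H=\mathcal C$, and summing against $b_{i,n}$ is $\mathcal B\mathcal C=\mathcal I$, so you are unwinding the paper's one-line associativity step $\mathcal B\big(\mathcal B^t\mathcal H\big)=\big(\mathcal B\mathcal B^t\big)\mathcal H$ (justified by the absolute convergence hypothesis) into explicit Fubini manipulations, and the Cauchy--Schwarz reduction of~\eqref{eq:suffAH1} to~\eqref{eq:suffAH} is identical. One small caveat: for the first assertion (ac) should be obtained directly from $\sum_k|a_{i,k}s_{k+j}|\le\sum_{k,n}|b_{i,n}b_{k,n}s_{k+j}|<\infty$, which is immediate from the triangle inequality applied to~\eqref{eq:akl} and the hypothesis~\eqref{eq:suffAH}; the detour you sketch via $|a_{i,k}|\le c_ic_k$ and the bound ``that will establish \eqref{eq:suffAH}'' smuggles in the stronger hypothesis~\eqref{eq:suffAH1} (i.e., (cs*)), which is not assumed in the first part of the lemma.
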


\begin{proof} We first notice that \eqref{eq:suffAH} implies (ac) because of \eqref{eq:akl}. We next note that \eqref{eq:suffAH} implies
\begin{gather*}
\mathcal B\big(\mathcal B^t\mathcal H\big)=\big(\mathcal B\mathcal B^t\big)\mathcal H,
\end{gather*}
so by \eqref{eq:I=B(B^tH)} and \eqref{eq:A=BBt} we get
\begin{gather*}
\mathcal I=\mathcal B\big(\mathcal B^t\mathcal H\big)=\big(\mathcal B\mathcal B^t\big)\mathcal H=\mathcal A\mathcal H.
\end{gather*}
Suppose next that \eqref{eq:suffAH1} holds. By the Cauchy--Schwarz inequality
\begin{gather*}
\sum_{n,l=0}^\infty |b_{k,n}b_{l,n}s_{l+m}|\le \sum_{l=0}^\infty c_kc_l|s_{l+m}|<\infty,
\end{gather*}
where we have used \eqref{eq:c_k}.
\end{proof}

\begin{rem}\label{thm:c-sqrt} It is tempting to use $|s_{l+m}|\le \sqrt{s_{2l}}\sqrt{s_{2m}}$ to assert that the conditions of Lem\-ma~\ref{thm:suffAH} are satisfied if $\sum c_l\sqrt{s_{2l}}<\infty$. This is not possible because $c_l\sqrt{s_{2l}}\ge 1$ by Lemma~\ref{thm:Un}.
\end{rem}

We shall mainly be concerned about \eqref{eq:AH} in the symmetric case, characterized by $a_n\equiv 0$, but since there is a one-to-one correspondence between Stieltjes moment problems and symmetric moment problems, we shall first compare \eqref{eq:AH} in the two cases.

Suppose
\begin{gather*}
t_n=\int_0^\infty t^n{\rm d}\nu(t),\qquad n\ge 0
\end{gather*}
is a Stieltjes moment sequence of a probability measure $\nu$ on $[0,\infty[$ with infinite support. The corresponding orthonormal polynomials $(P_n)$ satisfy~\eqref{eq:rec}, where now $a_n>0$. Let $(R_n)$ denote the orthonormal polynomials with respect to the measure $t{\rm d}\nu(t)$. Then
\begin{gather*}
S_{2n}(x):=P_n\big(x^2\big),\qquad S_{2n+1}(x):=xR_n\big(x^2\big)
\end{gather*}
are the orthonormal polynomials with respect to the unique symmetric probability measure~$\mu$ on $\mathbb R$ such that for all non-negative Borel functions $f\colon [0,\infty[\to\mathbb R$
\begin{gather*}
\int_{-\infty}^\infty f\big(x^2\big){\rm d}\mu(x)=\int_0^\infty f(t){\rm d}\nu(t).
\end{gather*}
The moments $s_n$ of $\mu$ are $s_{2n}=t_n$, $s_{2n+1}=0$.

The three terms recurrence relation for $(S_n)$ has the form
\begin{gather}\label{eq:recsym(s)}
xS_n(x)={}^sb_ n S_{n+1}(x)+{}^sb_{n-1}S_{n-1}(x)
\end{gather}
with
\begin{gather*}
a_0=({}^sb_0)^2,\qquad b_n=({}^sb_{2n})({}^sb_{2n+1}),\qquad n\ge 0, \qquad a_n=({}^sb_{2n})^2+({}^s b_{2n-1})^2,\qquad n\ge 1.
\end{gather*}

It is known that $\mu$ is indeterminate if and only if $\nu$ is Stieltjes indeterminate, i.e., there are measures on $[0,\infty[$ different from $\nu$ with the same moments as~$\nu$, cf.~\cite[p.~333]{Ch}. In the affirmative case we denote the reproducing kernels
\begin{gather*}
K(z,w)=\sum_{n=0}^\infty P_n(z)P_n(w)=\sum_{j,k=0}^\infty a_{j,k}z^jw^k,\\
K'(z,w)=\sum_{n=0}^\infty R_n(z)R_n(w)=\sum_{j,k=0}^\infty a'_{j,k}z^jw^k.
\end{gather*}
(Notice that also $t{\rm d}\nu(t)$ is Stieltjes indeterminate.)

The reproducing kernel for the symmetric measure $\mu$ is given as
\begin{gather}
K_s(z,w) = \sum_{n=0}^\infty S_n(z)S_n(w)=\sum_{n=0}^\infty P_n\big(z^2\big)P_n\big(w^2\big)+zw\sum_{n=0}^\infty R_n\big(z^2\big)R_n\big(w^2\big)\nonumber \\
\hphantom{K_s(z,w)}{} = \sum_{j,k=0}^\infty a_{j,k}z^{2j}w^{2k}+\sum_{j,k=0}^\infty a'_{j,k}z^{2j+1}w^{2k+1} =\sum_{j,k=0}^\infty a^{(s)}_{j,k}z^{j}w^{k},\label{eq:repksym}
\end{gather}
where
\begin{gather*}
a_{j,k}^{(s)}=\begin{cases} a_{j/2,k/2}, & j, \ k \ \text{even},\\
0, & j+k \ \text{odd},\\
a'_{(j-1)/2,(k-1)/2}, & j, \ k \ \text{odd}.
\end{cases}
\end{gather*}
We then easily get the following about property (aci) from Definition~\ref{thm:defAH}:

\begin{prop}\label{thm:st-sym} The symmetric moment problem with moments $(s_n)$ has property {\rm (aci)} if and only if the two Stieltjes problems with moments
$(t_n)$ and $(t_{n+1})$ have property {\rm (aci)}.
\end{prop}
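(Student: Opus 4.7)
The plan is to exploit the parity structure encoded in \eqref{eq:repksym}. Since $a^{(s)}_{j,k}=0$ whenever $j+k$ is odd, and since $s_{2n+1}=0$, the bi-infinite sum $\sum_{k=0}^\infty a^{(s)}_{i,k}s_{k+j}$ splits into two disjoint pieces indexed by parity. Concretely, I would analyze the four cases according to the parities of $i$ and $j$:
\begin{itemize}\itemsep=0pt
\item If $i=2i'$ and $j=2j'$, only even $k=2k'$ contribute, and using $s_{2k'+2j'}=t_{k'+j'}$ the sum becomes $\sum_{k'} a_{i',k'}t_{k'+j'}$, which should equal $\delta_{i',j'}$.
\item If $i=2i'+1$ and $j=2j'+1$, only odd $k=2k'+1$ contribute, and using $s_{2k'+2j'+2}=t_{k'+j'+1}$ the sum becomes $\sum_{k'} a'_{i',k'}t_{k'+j'+1}$, which should equal $\delta_{i',j'}$.
\item In the two remaining (mixed parity) cases both sides vanish identically: the left side because either $a^{(s)}_{i,k}=0$ whenever $k$ has the parity making $s_{k+j}$ nonzero, or $s_{k+j}=0$ whenever $a^{(s)}_{i,k}\neq 0$; the right side because $\delta_{i,j}=0$.
\end{itemize}

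Next I would observe that absolute convergence of the even--even sum for all $i',j'$ together with the identity $\sum_{k'}a_{i',k'}t_{k'+j'}=\delta_{i',j'}$ is exactly property (aci) for the Stieltjes problem with moments $(t_n)$ (whose Hankel matrix has entries $t_{m+n}$ and whose reproducing kernel matrix is $\{a_{j,k}\}$). Similarly, the odd--odd condition, after shifting the index $k'\mapsto k'$ and using $t'_n:=t_{n+1}$, becomes $\sum_{k'}a'_{i',k'}t'_{k'+j'}=\delta_{i',j'}$ with absolute convergence, which is property (aci) for the Stieltjes problem with moments $(t_{n+1})$ (associated with the measure $t\,{\rm d}\nu(t)$, whose orthonormal polynomials are the $R_n$ and whose reproducing kernel coefficient matrix is $\{a'_{j,k}\}$).

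Since the decomposition is disjoint, absolute convergence of the combined sum in the symmetric problem is equivalent to absolute convergence of each of the two pieces, and the identity $\mathcal A^{(s)}\mathcal H^{(s)}=\mathcal I$ holds if and only if both $\mathcal A\mathcal H=\mathcal I$ and $\mathcal A'\mathcal H'=\mathcal I$ hold. This yields the equivalence claimed in the proposition. The only thing requiring a bit of attention is the bookkeeping between the reproducing kernel coefficients $a^{(s)}_{j,k}$ and the blocks $a_{j,k}$, $a'_{j,k}$ in \eqref{eq:repksym}, but this is a direct unpacking of the definition and presents no genuine obstacle.
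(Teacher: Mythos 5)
Your proof is correct and follows exactly the route the paper intends: the paper sets up the parity block structure of $\mathcal A^{(s)}$ in \eqref{eq:repksym} and then leaves Proposition~\ref{thm:st-sym} as an ``easily'' consequence, which is precisely the parity bookkeeping you carry out. Your case-by-case decomposition, the identification of the even--even block with (aci) for $(t_n)$ and the odd--odd block with (aci) for $(t_{n+1})$, and the observation that the mixed-parity sums vanish term by term, fill in exactly the details the paper suppresses.
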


For a Stieltjes problem we notice that the coefficients $b_{k,n}$ of the orthonormal polynomials $P_n$ have the sign pattern
$b_{k,n}=(-1)^{n-k}|b_{k,n}|$ because the zeros of $P_n$ are all positive. This implies that
\begin{gather*}
a_{k,l}=\sum_{n=0}^\infty b_{k,n}b_{l,n}=(-1)^{k+l}\sum_{n=0}^\infty |b_{k,n}||b_{l,n}|,
\end{gather*}
hence
\begin{gather*}
\sum_{l=0}^\infty |a_{k,l}s_{l+m}|=\sum_{l,n=0}^\infty |b_{k,n}||b_{l,n}|s_{l+m}.
\end{gather*}

By Lemma~\ref{thm:suffAH} this shows

\begin{prop}\label{thm:stieltjesAH} For an indeterminate Stieltjes problem the properties {\rm (ac)} and {\rm (aci)} from Definition~{\rm \ref{thm:defAH}} are equivalent.
\end{prop}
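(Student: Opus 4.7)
The plan is to show that for an indeterminate Stieltjes problem, property (ac) already implies the hypothesis \eqref{eq:suffAH} of Lemma~\ref{thm:suffAH}, which in turn delivers (aci); the reverse implication (aci)$\implies$(ac) is part of Theorem~\ref{thm:start}(i).

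The key observation is the sign pattern of the coefficients $b_{k,n}$ of the orthonormal polynomials $P_n$ in the Stieltjes case. Because the measure is supported on $[0,\infty)$, all zeros of $P_n$ are strictly positive. Writing $P_n(x)=b_{n,n}\prod_{i=1}^{n}(x-x_i^{(n)})$ with $b_{n,n}>0$ and $x_i^{(n)}>0$, the elementary symmetric functions of the zeros are positive, so expanding gives $b_{k,n}=(-1)^{n-k}|b_{k,n}|$, which is exactly the pattern stated in the excerpt just before the proposition.

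From this sign pattern together with \eqref{eq:akl} one immediately gets $a_{k,l}=(-1)^{k+l}\sum_{n}|b_{k,n}||b_{l,n}|$, hence $|a_{k,l}|=\sum_{n\ge\max(k,l)}|b_{k,n}||b_{l,n}|$. Since $s_{l+m}=t_{(l+m)/2\,\text{(if even)}}\ge 0$ in the Stieltjes setting (more precisely, all $s_n\ge 0$), one can unfold the double sum with Tonelli and obtain
\begin{gather*}
\sum_{l=0}^\infty |a_{k,l}s_{l+m}|
=\sum_{l=0}^\infty |a_{k,l}|\,s_{l+m}
=\sum_{l,n=0}^\infty |b_{k,n}||b_{l,n}|\,s_{l+m},
\end{gather*}
which is precisely the left-hand side of \eqref{eq:suffAH}. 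Therefore property (ac), which by definition asserts finiteness of the left-hand side for all $k,m\ge 0$, is equivalent to condition \eqref{eq:suffAH}.

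Invoking Lemma~\ref{thm:suffAH} then yields (aci), completing the nontrivial direction. The only mild obstacle is verifying that the rearrangement from a single sum over $l$ to the double sum over $(l,n)$ is legitimate, but this is immediate from Tonelli once the nonnegativity $s_{l+m}\ge 0$ and the sign identification $|a_{k,l}|=\sum_n |b_{k,n}||b_{l,n}|$ are in hand; there is no genuine analytic difficulty beyond Stieltjes positivity and the positivity of the zeros of~$P_n$.
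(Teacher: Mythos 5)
Your proof takes exactly the same route as the paper's: identify the sign pattern $b_{k,n}=(-1)^{n-k}|b_{k,n}|$ coming from the positivity of the zeros of $P_n$, deduce $|a_{k,l}|=\sum_n |b_{k,n}||b_{l,n}|$, observe that (ac) is then literally condition \eqref{eq:suffAH}, and invoke Lemma~\ref{thm:suffAH}. You merely spell out a couple of implicit details (nonnegativity of the Stieltjes moments and the Tonelli rearrangement), which the paper leaves tacit.
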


Combining this with Proposition~\ref{thm:st-sym} we immediately get:

\begin{prop}\label{thm:symindetac} For a symmetric indeterminate moment problem the pro\-perties {\rm (ac)} and {\rm (aci)} from Definition~{\rm \ref{thm:defAH}} are equivalent.
\end{prop}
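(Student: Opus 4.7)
The proof is an assembly of the two cited results: Proposition \ref{thm:stieltjesAH} gives (ac) $\iff$ (aci) on the Stieltjes side, and Proposition \ref{thm:st-sym} transfers (aci) between a symmetric problem and its two associated Stieltjes problems. Since (aci) $\implies$ (ac) is immediate from Definition \ref{thm:defAH}, only the reverse direction needs work; the plan is to push (ac) from the symmetric problem to each of its two Stieltjes problems, upgrade to (aci) there, and then push back.

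For the transfer, suppose the symmetric problem with moments $(s_n)$ has (ac), and let $(t_n)$, $(t_{n+1})$ be the moment sequences of the two associated Stieltjes problems, so $s_{2n}=t_n$ and $s_{2n+1}=0$. I would split the (ac) hypothesis according to the parity of $(i,j)$ using the block decomposition of $a^{(s)}_{j,k}$ displayed just below \eqref{eq:repksym}. Choosing even indices $i=2i'$, $j=2j'$, the odd values of $k$ contribute nothing (because then $i+k$ is odd and $a^{(s)}_{i,k}=0$), while for $k=2k'$ the summand equals $|a_{i',k'}|\,t_{k'+j'}$; hence (ac) yields $\sum_{k'}|a_{i',k'}|\,t_{k'+j'}<\infty$ for all $i',j'\ge 0$, which is exactly (ac) for the first Stieltjes problem. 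Choosing odd indices $i=2i'+1$, $j=2j'+1$, only odd $k=2k'+1$ contribute, the odd moments again vanishing, and the sum reduces to $\sum_{k'}|a'_{i',k'}|\,t_{k'+j'+1}<\infty$, which is (ac) for the second Stieltjes problem with moment sequence $(t_{n+1})$.

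Applying Proposition \ref{thm:stieltjesAH} on each side upgrades (ac) to (aci), and Proposition \ref{thm:st-sym} then returns (aci) for the symmetric problem. I do not anticipate a genuine obstacle: the whole argument is bookkeeping with parities, made automatic by the vanishing of the odd moments of a symmetric measure together with the block structure of $\{a^{(s)}_{j,k}\}$, so the only thing to be careful about is that (ac) for Stieltjes holds \emph{at every index pair} $(i',j')$, which the two parity choices above indeed deliver.
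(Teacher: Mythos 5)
Your proposal is correct and follows exactly the route the paper intends: combine Proposition~\ref{thm:st-sym} with Proposition~\ref{thm:stieltjesAH}, splitting by parity via the block structure of $\{a^{(s)}_{j,k}\}$ and the vanishing of the odd moments. The paper presents this as ``immediate'' and does not spell out the transfer of (ac) from the symmetric problem to the two Stieltjes problems; your parity bookkeeping is the right way to fill in that step, and it is correct.
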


In order to verify convergence of the series in \eqref{eq:suffAH1}, we shall estimate the sequences $(c_n)$, defined in~\eqref{eq:c_k}, and $(s_{2n})$ in terms of the quantities
\begin{gather}\label{eq:Un}
U_n:=\frac{s_{2n}}{b_0^2b_1^2\cdots b_{n-1}^2},\qquad n\ge 1,\qquad U_0:=s_0=1
\end{gather}
and
\begin{gather}\label{eq:Vn}
V_n:=b_0 b_1\cdots b_{n-1}c_n,\qquad n\ge 1,\qquad V_0:=c_0>1.
\end{gather}
This will be done in the next two sections.

The quantity $U_n$ is defined for any moment problem, while $V_n$ only makes sense for indeterminate problems.

It is easy to see that both quantities are scale invariant in the sense that if $(a_n)$, $(b_n)$ from the recurrence relation \eqref{eq:rec} are replaced by a constant multiple $(\lambda a_n)$, $(\lambda b_n)$ for some $\lambda>0$, then $U_n$, $V_n$ remain unchanged.

In the following we shall often compare quantities like $s_n,U_n,c_n,\ldots$ depending on the recurrence coefficients $(a_n)$, $(b_n)$ with those corresponding to other recurrence coefficients, say $(\tilde{a}_n)$,~$\big(\tilde{b}_n\big)$. We then denote the corresponding quantities $\tilde{s}_n,\tilde{U}_n,\tilde{c}_n,\ldots$.

\begin{lem}\label{thm:Un}\quad
\begin{enumerate}\itemsep=0pt
\item[$(i)$] For any moment problem we have $U_n\ge 1$.
 \item[$(ii)$] For any indeterminate moment problem we have $V_n\ge 1$ and $V_n\sqrt{U_n}=c_n\sqrt{s_{2n}}\ge 1$.
\end{enumerate}
\end{lem}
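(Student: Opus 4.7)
The plan is to derive both inequalities directly from the formulas \eqref{eq:b_n} for the leading coefficients $b_{n,n}$ and $c_{n,n}$, together with Parseval-type identities in $L^2(\mu)$ and $\ell^2$.

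For part (i), I would compute $s_{2n} = \langle x^n,x^n\rangle_{L^2(\mu)}$ using the expansion \eqref{eq:x}. Since $(P_k)$ is orthonormal, Parseval gives
\begin{gather*}
s_{2n}=\sum_{k=0}^n c_{k,n}^2\ge c_{n,n}^2=(b_0b_1\cdots b_{n-1})^2,
\end{gather*}
where the last equality is \eqref{eq:b_n}. Dividing by $(b_0\cdots b_{n-1})^2$ yields $U_n\ge 1$ for $n\ge 1$, and $U_0=1$ by definition.

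For part (ii), I would use the dual identity for $c_n$. From the definition \eqref{eq:c_k},
\begin{gather*}
c_n^2=\sum_{m=n}^\infty b_{n,m}^2\ge b_{n,n}^2=\frac{1}{(b_0b_1\cdots b_{n-1})^2},
\end{gather*}
again by \eqref{eq:b_n}. Multiplying by $(b_0\cdots b_{n-1})^2$ gives $V_n^2\ge 1$, hence $V_n\ge 1$ for $n\ge 1$; the case $n=0$ is the standing assumption $V_0=c_0>1$.

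Finally, the identity $V_n\sqrt{U_n}=c_n\sqrt{s_{2n}}$ is immediate from the definitions \eqref{eq:Un} and \eqref{eq:Vn}, since the product $b_0\cdots b_{n-1}$ in $V_n$ cancels against the factor $1/(b_0\cdots b_{n-1})$ coming from $\sqrt{U_n}=\sqrt{s_{2n}}/(b_0\cdots b_{n-1})$. The lower bound $c_n\sqrt{s_{2n}}\ge 1$ then follows by combining $V_n\ge 1$ with $U_n\ge 1$. There is no real obstacle here; the only thing to be careful about is invoking the correct identity from \eqref{eq:b_n} for each of the two extremal coefficients $b_{n,n}$ and $c_{n,n}$, which are reciprocals of each other.
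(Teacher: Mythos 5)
Your proof is correct and follows essentially the same route as the paper: expand $x^n$ in the orthonormal basis to get $s_{2n}=\sum_k c_{k,n}^2\ge c_{n,n}^2=(b_0\cdots b_{n-1})^2$ for (i), and use $c_n^2=\sum_m b_{n,m}^2\ge b_{n,n}^2=(b_0\cdots b_{n-1})^{-2}$ for (ii). The paper wraps the last step up as $c_n\ge b_{n,n}\ge 1/\sqrt{s_{2n}}$ rather than explicitly multiplying $V_n\ge 1$ by $\sqrt{U_n}\ge 1$, but the content is the same.
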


\begin{proof}By \eqref{eq:x} we have
\begin{gather*}
s_{2n}=\langle x^n,x^n\rangle=c_{0,n}^2+c_{1,n}^2+\dots +c_{n,n}^2 \ge c_{n,n}^2= b_0^2b_1^2\cdots b_{n-1}^2,
\end{gather*}
hence by \eqref{eq:c_k} and \eqref{eq:b_n}
\begin{gather*}
c_n=\left (\sum_{j=n}^\infty b_{n,j}^2\right )^{1/2}\ge b_{n,n}={1\over b_0b_1\cdots b_{n-1}} \ge 1/\sqrt{s_{2n}}.\tag*{\qed}
\end{gather*}\renewcommand{\qed}{}
\end{proof}

In \cite[Section 3]{B:S2} we considered the power series
\begin{gather*}
\Phi(z)=\sum_{n=0}^\infty c_nz^n,
\end{gather*}
and proved that $\Phi$ is an entire function of minimal exponential type. In terms of $(c_n)$ this
can be expressed
\begin{gather}\label{eq:minexpt}
\lim_{k\to\infty} k c_k^{1/k} =0.
\end{gather}
It was also proved there that $\Phi$ has the same order $\rho$ and type $\tau$ as the moment problem, which by definition is the common order and type of the entire functions in the Nevanlinna matrix, see \cite{B:P,B:P:H}. From the general formula for type of an entire function in terms of its power series coefficients, see~\cite{Le}, we then get the following result.

\begin{thm}\label{thm:otcn} Consider an indeterminate moment problem of order $0<\rho<1$ and type $0\le \tau\le \infty$. Then
\begin{gather}\label{eq:otcn}
\limsup_{k\to\infty} k^{1/\rho} c_k^{1/k} = ({\rm e} \rho \tau )^{1/\rho}.
\end{gather}
\end{thm}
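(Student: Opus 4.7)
The plan is to reduce the statement to the classical Lindelöf--Pringsheim formula relating type and coefficients of an entire function, applied to $\Phi$. First, I would recall from the preceding discussion that $\Phi(z)=\sum_{n\ge 0}c_nz^n$ is an entire function (indeed of minimal exponential type, so in particular of order $\rho\le 1$), and that it has the same order $\rho$ and type $\tau$ as the Nevanlinna matrix of the indeterminate moment problem. Under the hypothesis $0<\rho<1$, the order of $\Phi$ is strictly positive and finite, so the standard formula for the type of an entire function in terms of its Taylor coefficients is available.

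Next, I would invoke the classical result (cf.~\cite{Le}) which asserts that for any entire function $f(z)=\sum_{n\ge 0}\alpha_nz^n$ of finite positive order $\rho$ and type $\tau\in[0,\infty]$,
\begin{gather*}
\tau=\frac{1}{e\rho}\limsup_{n\to\infty} n|\alpha_n|^{\rho/n}.
\end{gather*}
Applied to $f=\Phi$ and $\alpha_n=c_n\ge 0$, this gives
\begin{gather*}
\limsup_{n\to\infty} n c_n^{\rho/n}=e\rho\tau.
\end{gather*}
Raising both sides to the power $1/\rho$ (a continuous, increasing function on $[0,\infty]$, with the natural conventions for the endpoints) and using that $x\mapsto x^{1/\rho}$ commutes with $\limsup$ of non-negative sequences yields
\begin{gather*}
\limsup_{n\to\infty} n^{1/\rho}c_n^{1/n}=(e\rho\tau)^{1/\rho},
\end{gather*}
which is \eqref{eq:otcn}.

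The only real point needing care is the justification that $\Phi$ falls within the hypotheses of the classical formula and that the formula does extend to the extremal values $\tau=0$ and $\tau=\infty$ with the standard conventions $0^{1/\rho}=0$ and $\infty^{1/\rho}=\infty$; both are handled by the usual proof of the Lindelöf--Pringsheim theorem and pose no genuine obstacle here. The restriction $\rho<1$ is needed only insofar as it is already part of the standing hypothesis in the statement; the coincidence of order and type of $\Phi$ with those of the moment problem has been established in \cite{B:S2} and is used as a black box.
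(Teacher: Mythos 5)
Your proposal is correct and follows essentially the same route as the paper: cite from \cite{B:S2} that $\Phi(z)=\sum c_nz^n$ has the same order $\rho$ and type $\tau$ as the moment problem, then apply the Lindel\"of--Pringsheim coefficient formula from \cite{Le} and rearrange. The only cosmetic difference is that you spell out the elementary step of raising the limsup to the power $1/\rho$, which the paper leaves implicit.
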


\section[Estimates of the even moments via $U_n$]{Estimates of the even moments via $\boldsymbol{U_n}$}\label{section3}
In all of this section we assume that the moment problem is symmetric or equivalently that $a_n\equiv 0$, so the recurrence relation~\eqref{eq:rec} takes the simplified form
\begin{gather}\label{eq:recsym}
xP_n=b_nP_{n+1}+b_{n-1}P_{n-1}.
\end{gather}

\begin{prop}\label{thm:increasofbn}Let $(b_n)$ and $\big(\tilde{b}_n\big)$ denote two sequences of positive numbers satisfying $b_n\le \tilde{b}_n$, $n\ge 0$. Then the corresponding even moments satisfy $s_{2n}\le \tilde{s}_{2n}$, $n\ge 0$.
\end{prop}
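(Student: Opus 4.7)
The plan is to exploit the fact that in the symmetric case the expansion coefficients $c_{k,n}$ from \eqref{eq:x} are non-negative polynomials in the recurrence coefficients $(b_j)$, so that the componentwise inequality $b_j\le\tilde b_j$ transfers directly to $s_{2n}$.

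First I would observe from \eqref{eq:x} and orthonormality of $(P_k)$ that
\[
s_{2n}=\langle x^n,x^n\rangle_{L^2(\mu)}=\sum_{k=0}^n c_{k,n}^2.
\]
Next, substituting the expansion $x^{n-1}=\sum_k c_{k,n-1}P_k(x)$ into $x^n=x\cdot x^{n-1}$ and applying the symmetric three-term recurrence \eqref{eq:recsym}, I would derive the two-index recursion
\[
c_{k,n}=b_{k-1}\,c_{k-1,n-1}+b_k\,c_{k+1,n-1},\qquad k,n\ge 1,
\]
with the conventions $c_{-1,\cdot}=0$, $c_{k,n-1}=0$ for $k>n-1$, $b_{-1}=1$, and base case $c_{0,0}=1$. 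A straightforward induction on $n$ then gives $c_{k,n}\ge 0$ for all $k,n$; in fact $c_{k,n}$ is a polynomial in $b_0,\dots,b_{n-1}$ with non-negative integer coefficients, which can alternatively be seen by interpreting $s_{2n}$ as a sum over walks of length $2n$ on $\mathbb N_0$ starting and ending at $0$, weighted by the off-diagonal entries $b_j$ of the associated Jacobi matrix.

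Finally, denoting by $\tilde c_{k,n}$ the coefficients attached to $(\tilde b_n)$, a second induction on $n$ — using the recurrence above, the hypothesis $b_j\le\tilde b_j$, and the already established non-negativity of $c_{k,n-1}$ and $\tilde c_{k,n-1}$ — yields
\[
c_{k,n}\le \tilde c_{k,n}\qquad\text{for all }k,n\ge 0.
\]
Squaring and summing in $k$ gives $s_{2n}=\sum_k c_{k,n}^2\le\sum_k\tilde c_{k,n}^2=\tilde s_{2n}$, which is the desired conclusion.

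There is no serious obstacle; the entire content of the lemma is the observation that in the symmetric case every term appearing in the recurrence for $c_{k,n}$ carries a positive sign. This would fail in the non-symmetric case, where an additional summand $a_k c_{k,n-1}$ with $a_k$ of arbitrary sign would spoil both the non-negativity of $c_{k,n}$ and the monotonicity in the $b_j$'s.
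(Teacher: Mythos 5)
Your argument is correct and is essentially the paper's: the paper invokes the identity $s_n=(J^n\delta_0,\delta_0)$ for the entrywise non-negative Jacobi matrix $J$ and remarks that the conclusion follows, while you spell out exactly that walk-counting monotonicity by noting $c_{k,n}=(J^n\delta_0)_k$, deriving its two-term recurrence, and establishing non-negativity and monotonicity in $(b_j)$ by induction. This is a fuller write-up of the same underlying idea (you even mention the Jacobi-walk interpretation yourself), and your closing remark about where symmetry is used (positivity of all Jacobi entries) is a pertinent observation.
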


\begin{proof}Let $J$ denote the Jacobi matrix associated to the moment problem, i.e.,
\begin{gather}\label{eq:Jacobimat}
J=\begin{pmatrix} 0 & b_0 & 0 & 0 & \cdots\\
 b_0 & 0 & b_1 & 0 & \cdots\\
 0 & b_1 & 0 & b_2 & \cdots\\
 0 & 0 & b_2 & 0 & \cdots\\
 \vdots & \vdots & \vdots & \vdots & \ddots
\end{pmatrix}.
\end{gather}
It is well-known that $s_n=(J^n\delta_0,\delta_0)$, and the conclusion follows.
\end{proof}

In the symmetric case $P_k$ involves only monomials with the same parity as $k$. Thus we have
 \begin{gather}\label{eq:u0}
{x^n\over b_0b_1\cdots b_{n-1}}=\sum_{k=0}^{[n/2]}u_{n,k}P_{n-2k}=u_{n,0}P_n+u_{n,1}P_{n-2}+\cdots,
\end{gather}
 hence
 \begin{gather}\label{eq:u1}
U_n:= {s_{2n}\over b_0^2b_1^2\cdots b_{n-1}^2}=\sum_{k=0}^{[n/2]}u_{n,k}^2.
 \end{gather}
(Compared with \eqref{eq:x} we have $u_{n,k}=c_{n-2k,n}/(b_0\cdots b_{n-1})$, while $c_{n-2k-1,n}=0$.) By \eqref{eq:lead} we have $u_{n,0}=1$.

By the recurrence relation \eqref{eq:recsym} the coefficients $u_{n,k}$ satisfy
\begin{gather}\label{eq:u}
u_{n+1,k}={b_{n-2k}\over b_n} u_{n,k}+{b_{n+1-2k}\over b_n}u_{n,k-1},\qquad 1\le k\le [n/2].
\end{gather}
In fact, multiplying \eqref{eq:u0} by $x/b_n$ and using the recurrence relation \eqref{eq:recsym}, we get
\begin{gather*}
\frac{x^{n+1}}{b_0b_1\cdots b_n}=\sum_{k=0}^{[n/2]}\frac{u_{n,k}}{b_n}\left(b_{n-2k}P_{n-2k+1}+b_{n-2k-1}P_{n-2k-1}\right),
\end{gather*}
and \eqref{eq:u} follows. We may extend the definition by setting $u_{n,k}=0$ for $k>[n/2]$ or $k<0$ and $b_n=0$ for $n<0$. In this way it is not difficult to see that
 \begin{gather}\label{eq:u'}
u_{n+1,k}={b_{n-2k}\over b_n} u_{n,k}+{b_{n+1-2k}\over b_n}u_{n,k-1},\qquad n, k\ge 0,
\end{gather}
and it follows by induction that all coefficients $u_{n,k}\ge 0$.

By \eqref{eq:u1} and \eqref{eq:u} this immediately implies the following.

\begin{prop}\label{thm:comparison} Assume $(b_n)$ and $\big(\tilde{b}_n\big)$ are positive sequences satisfying
\begin{gather*}
{b_{n-1}\over b_{n}} \le {\tilde{b}_{n-1}\over \tilde{b}_{n}} ,\qquad n\ge 1,
\end{gather*}
 and let $s_{2n}$ and $\tilde{s}_{2n}$ denote the corresponding even moments. Then
\begin{gather*}
 U_n:={s_{2n}\over b_0^2b_1^2\cdots b_{n-1}^2} \le \tilde{U}_n:={\tilde{s}_{2n}\over \tilde{b}_0^2\tilde{b}_1^2\cdots \tilde{b}_{n-1}^2}.
\end{gather*}
\end{prop}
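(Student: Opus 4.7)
The strategy is to prove a pointwise comparison $u_{n,k}\le \tilde u_{n,k}$ for all $n,k\ge 0$ by induction on $n$, using the recurrence \eqref{eq:u'} together with the non-negativity of the $u_{n,k}$. Since $U_n=\sum_k u_{n,k}^2$ by \eqref{eq:u1} (and likewise for $\tilde U_n$), the desired inequality $U_n\le \tilde U_n$ is then immediate.

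The first observation I would record is that the ``long'' ratios $b_{n-2k}/b_n$ and $b_{n+1-2k}/b_n$ appearing in \eqref{eq:u'} can be written as products of consecutive ratios:
\begin{gather*}
\frac{b_{n-2k}}{b_n}=\prod_{j=n-2k+1}^{n}\frac{b_{j-1}}{b_j},\qquad \frac{b_{n+1-2k}}{b_n}=\prod_{j=n+2-2k}^{n}\frac{b_{j-1}}{b_j}.
\end{gather*}
Hence the hypothesis $b_{j-1}/b_j\le \tilde b_{j-1}/\tilde b_j$ ($j\ge 1$) implies
\begin{gather*}
\frac{b_{n-2k}}{b_n}\le \frac{\tilde b_{n-2k}}{\tilde b_n},\qquad \frac{b_{n+1-2k}}{b_n}\le \frac{\tilde b_{n+1-2k}}{\tilde b_n}
\end{gather*}
for every $n\ge 0$ and $1\le k\le [n/2]$. (For $k=0$ the first ratio equals $1=\tilde b_n/\tilde b_n$, so there is nothing to check.)

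Now I induct on $n$. The base case is $u_{0,0}=1=\tilde u_{0,0}$, and for $k<0$ or $k>[n/2]$ both $u_{n,k}$ and $\tilde u_{n,k}$ are defined to be zero. Assume $u_{n,k}\le \tilde u_{n,k}$ for all $k$. Since all quantities are non-negative, the extended recurrence \eqref{eq:u'} yields
\begin{gather*}
u_{n+1,k}=\frac{b_{n-2k}}{b_n}u_{n,k}+\frac{b_{n+1-2k}}{b_n}u_{n,k-1}\le \frac{\tilde b_{n-2k}}{\tilde b_n}\tilde u_{n,k}+\frac{\tilde b_{n+1-2k}}{\tilde b_n}\tilde u_{n,k-1}=\tilde u_{n+1,k},
\end{gather*}
which closes the induction.

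There is no real obstacle here: the only thing one has to notice is that the telescoping product expresses the ``non-consecutive'' ratios in terms of the consecutive ones controlled by the hypothesis; after that the non-negativity built into \eqref{eq:u'} does all the work. Squaring and summing over $k$ then gives $U_n\le \tilde U_n$, as required.
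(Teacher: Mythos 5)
Your proof is correct and is essentially the argument the paper intends: the paper's proof is a one-liner (``By \eqref{eq:u1} and \eqref{eq:u} this immediately implies the following''), which, when unpacked, is exactly the induction you give — the hypothesis controls consecutive ratios, telescoping controls the non-consecutive ratios $b_{n-2k}/b_n$ and $b_{n+1-2k}/b_n$ appearing in the recurrence, and the non-negativity of $u_{n,k}$ lets you propagate the comparison $u_{n,k}\le\tilde u_{n,k}$ and sum squares. You have simply made explicit what the paper leaves to the reader; no new idea or different route is involved.
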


For $n_0\in\mathbb N$ the shifted recurrence sequence $b_n^{(n_0)}:=b_{n+n_0}$, $n\ge 0$ leads to moments $s_n^{(n_0)}$ and
\begin{gather*}
U_n^{(n_0)}:=\frac{s_{2n}^{(n_0)}}{\big(b_0^{(n_0)}\cdots b_{n-1}^{(n_0)}\big)^2}=\frac{s_{2n}^{(n_0)}}{b_{n_0}^2\cdots b_{n_0+n-1}^2}.
\end{gather*}

\begin{prop}\label{thm:shiftU} The expressions above satisfy
\begin{gather}\label{eq:shiftU1}
s_{2n+2}\ge b_0^2 s_{2n}^{(1)},\qquad U_{n+1}\ge U_n^{(1)}.
\end{gather}
If $b_n\le b_n^{(n_0)}$ for $n\ge 0$ and $\lim b_n^{1/n}=1$, then
\begin{gather}\label{eq:shiftU2}
\limsup_{n\to\infty} U_n^{1/n}=\limsup_{n\to\infty}\big(U_n^{(n_0)}\big)^{1/n}.
\end{gather}
The latter holds for $b_n=\lambda (n+1)^c$, $\lambda, c>0$.
\end{prop}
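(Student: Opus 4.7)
The plan is to handle the three assertions separately and in order, exploiting the representation $s_n = (J^n\delta_0,\delta_0)$ together with entry-wise non-negativity of $J$.

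For the inequalities in \eqref{eq:shiftU1}, I would start from the Jacobi matrix $J$ of \eqref{eq:Jacobimat} and use $J\delta_0 = b_0\delta_1$ (which holds since $a_0=0$ in the symmetric case) to write
\begin{gather*}
s_{2n+2}=\big(J^{2n+2}\delta_0,\delta_0\big)=\big(J^{2n}(J\delta_0),J\delta_0\big)=b_0^2\big(J^{2n}\delta_1,\delta_1\big).
\end{gather*}
Let $\tilde J$ be the matrix obtained from $J$ by zeroing out the 0-th row and column. Then $\tilde J \le J$ entry-wise, all entries are non-negative, so $\tilde J^{2n}\le J^{2n}$ entry-wise. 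The $(1,1)$-entry of $\tilde J^{2n}$ equals the $(0,0)$-entry of $(J^{(1)})^{2n}$, where $J^{(1)}$ is the Jacobi matrix of the shifted sequence $(b_n^{(1)})$. Hence $(J^{2n}\delta_1,\delta_1)\ge s_{2n}^{(1)}$, which gives $s_{2n+2}\ge b_0^2 s_{2n}^{(1)}$. Dividing through by $b_0^2b_1^2\cdots b_n^2$ yields $U_{n+1}\ge U_n^{(1)}$.

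For \eqref{eq:shiftU2}, iterate this inequality (applied to the shifted problems, which remain symmetric) to obtain $U_{n+n_0}\ge U_n^{(n_0)}$ for every $n$. If $L:=\limsup_{n\to\infty}U_n^{1/n}$, then $U_n\le (L+\varepsilon)^n$ eventually, so $(U_n^{(n_0)})^{1/n}\le(L+\varepsilon)^{(n+n_0)/n}\to L+\varepsilon$, giving $\limsup(U_n^{(n_0)})^{1/n}\le L$. For the reverse bound, the hypothesis $b_n\le b_n^{(n_0)}$ and Proposition \ref{thm:increasofbn} yield $s_{2n}\le s_{2n}^{(n_0)}$, so
\begin{gather*}
U_n\le U_n^{(n_0)}\,\frac{(b_{n_0}b_{n_0+1}\cdots b_{n_0+n-1})^2}{(b_0b_1\cdots b_{n-1})^2}=U_n^{(n_0)}\,\frac{(b_n b_{n+1}\cdots b_{n+n_0-1})^2}{(b_0b_1\cdots b_{n_0-1})^2}
\end{gather*}
after cancelling the common factors $b_{n_0}\cdots b_{n-1}$ (for $n\ge n_0$). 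The denominator of the correction factor is a constant, so its $n$-th root tends to $1$; for the numerator, writing $\log b_{n+j}=(n+j)\cdot(\log b_{n+j})/(n+j)$ and using $b_m^{1/m}\to 1$ shows $(b_n\cdots b_{n+n_0-1})^{1/n}\to 1$. Taking $n$-th roots and $\limsup$ therefore gives $\limsup U_n^{1/n}\le \limsup(U_n^{(n_0)})^{1/n}$, and \eqref{eq:shiftU2} follows.

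Finally, for $b_n=\lambda(n+1)^c$ with $\lambda,c>0$, the sequence is strictly increasing, so $b_n\le b_{n+n_0}=b_n^{(n_0)}$, and $b_n^{1/n}=\lambda^{1/n}(n+1)^{c/n}\to 1$, verifying both hypotheses. The main technical point is the limsup manipulation in the second paragraph—specifically the two-sided control—while the Jacobi-matrix identity in paragraph one and the verification in paragraph three are routine.
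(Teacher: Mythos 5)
Your proof is correct and follows essentially the same route as the paper: the Jacobi-matrix/entrywise-monotonicity argument for \eqref{eq:shiftU1}, then the chain $U_n \le (\text{correction factor})\cdot U_n^{(n_0)} \le (\text{correction factor})\cdot U_{n+n_0}$ (equivalently, iterating $U_{n+1}\ge U_n^{(1)}$ for one direction and combining Proposition~\ref{thm:increasofbn} with $\lim b_n^{1/n}=1$ for the other) for \eqref{eq:shiftU2}. The paper packs the two-sided control into a single displayed chain of inequalities, whereas you separate the two directions, but the content is identical.
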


\begin{proof} Define
\begin{gather*}
J_{(1)}=\begin{pmatrix} 0 & 0 & 0 & 0 & \cdots\\
 0 & 0 & b_1 & 0 & \cdots\\
 0 & b_1 & 0 & b_2 & \cdots\\
 0 & 0 & b_2 & 0 & \cdots\\
 \vdots & \vdots & \vdots & \vdots & \ddots
\end{pmatrix}=
\begin{pmatrix} 0 & 0\\
0 & J^{(1)}
\end{pmatrix},
\end{gather*}
where $J^{(1)}$ is the Jacobi matrix \eqref{eq:Jacobimat} corresponding to $b_n^{(1)}$ and the matrix to the right is a block matrix. The upper left $0$ is a scalar and the upper right $0$ is an infinite row vector of zeros. In block matrix notation we then have
\begin{gather*}
\big(J_{(1)}\big)^n=\begin{pmatrix} 0 & 0\\
0 & \big(J^{(1)}\big)^n
\end{pmatrix},
\end{gather*}
hence
\begin{gather*}
s_{2n}^{(1)}=\big((J_{(1)})^{2n} \delta_1,\delta_1\big).
\end{gather*}
This gives
\begin{gather*}
s_{2n+2} = \big(J^{2n+2}\delta_0,\delta_0\big)=\big(J^{2n}J\delta_0,J\delta_0\big)=b_0^2\big(J^{2n}\delta_1,\delta_1\big)
\ge b_0^2 \big((J_{(1)})^{2n} \delta_1,\delta_1\big)=b_0^2s_{2n}^{(1)},
\end{gather*}
hence $U_{n+1}\ge U_n^{(1)}$.

If $b_n\le b_n^{(n_0)}$ we have $s_{2n}\le s_{2n}^{(n_0)}$ by Proposition~\ref{thm:increasofbn}, and therefore for $n>n_0$
\begin{gather*}
U_n\le \frac{b_{n_0}^2b_{n_0+1}^2\cdots b_{n_0+n-1}^2}{b_0^2 b_1^2\cdots b_{n-1}^2}U_n^{(n_0)}\le \frac{b_n^2b_{n+1}^2\cdots b_{n+n_0-1}^2}{b_0^2 b_1^2\cdots b_{n_0-1}^2}U_{n+n_0}.
\end{gather*}
If $\lim b_n^{1/n}=1$ we get \eqref{eq:shiftU2}.
\end{proof}

\begin{lem}\label{thm:b_i/b_{i+1}} Assume $(b_n)$ satisfies
\begin{gather*}
\sup_{i,j\ge 0}\frac{b_i}{b_{i+j}}=M<\infty.
\end{gather*}
Then
\begin{gather}\label{eq:Un+1}
U_{n+1}\le 4M^2U_n,\qquad s_{2n+2}\le 4M^2b_n^2s_{2n}.
\end{gather}
\end{lem}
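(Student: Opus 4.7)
The plan is to exploit the non-negative recurrence \eqref{eq:u'} together with the formula $U_n=\sum_k u_{n,k}^2$ from \eqref{eq:u1}. The hypothesis $b_i/b_{i+j}\le M$ for all $i,j\ge 0$ says precisely that whenever the index of a $b$ on the left of the recurrence is no larger than $n$, the resulting ratio is bounded by $M$.

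Concretely, for each $k\ge 0$ and $n\ge 0$, whenever $n-2k\ge 0$ we have $b_{n-2k}/b_n\le M$, and whenever $n+1-2k\ge 0$ we have $b_{n+1-2k}/b_n\le M$. For the remaining indices the convention $b_m=0$ for $m<0$ makes the corresponding term vanish, so in every case
\begin{gather*}
u_{n+1,k}\le M\bigl(u_{n,k}+u_{n,k-1}\bigr),
\end{gather*}
where we also use $u_{n,k}\ge 0$ coming from \eqref{eq:u'} by induction. Squaring and applying the elementary inequality $(a+b)^2\le 2(a^2+b^2)$ gives
\begin{gather*}
u_{n+1,k}^2\le 2M^2\bigl(u_{n,k}^2+u_{n,k-1}^2\bigr).
\end{gather*}

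Summing over $k\ge 0$ and using that the shift $k\mapsto k-1$ in the second sum merely re-indexes the same non-negative terms, I obtain
\begin{gather*}
U_{n+1}=\sum_{k\ge 0} u_{n+1,k}^2\le 2M^2\Bigl(\sum_{k\ge 0} u_{n,k}^2+\sum_{k\ge 0} u_{n,k-1}^2\Bigr)=4M^2\,U_n,
\end{gather*}
which is the first inequality in \eqref{eq:Un+1}. The second then follows immediately from the definition \eqref{eq:Un}: multiplying the inequality $U_{n+1}\le 4M^2U_n$ by $b_0^2b_1^2\cdots b_n^2$ and recalling $s_{2n}=b_0^2\cdots b_{n-1}^2\,U_n$ yields $s_{2n+2}\le 4M^2 b_n^2 s_{2n}$.

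I do not foresee a real obstacle here; the only point requiring a little care is the treatment of the boundary $k=0$ and of $k$ slightly above $[n/2]$, where one of the indices on the right of \eqref{eq:u'} becomes negative. This is handled cleanly by the standing conventions $u_{n,-1}=0$ and $b_m=0$ for $m<0$, so those terms drop out of the bound.
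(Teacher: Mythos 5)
Your proof is correct and is essentially the same as the paper's: apply the nonnegative recurrence \eqref{eq:u} (equivalently \eqref{eq:u'}) to get $u_{n+1,k}\le M(u_{n,k}+u_{n,k-1})$, square with $(a+b)^2\le 2(a^2+b^2)$, sum over $k$, re-index, and then translate $U_{n+1}\le 4M^2U_n$ into the moment inequality via \eqref{eq:Un}. Your extra remark about the boundary cases ($k=0$, and $k$ near $[n/2]$) is a welcome clarification that the paper leaves implicit, but it does not change the argument.
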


\begin{proof} By the recurrence equation \eqref{eq:u} we get
\begin{gather*}
u_{n+1,k}\le M(u_{n,k}+u_{n,k-1}),
\end{gather*}
hence
\begin{gather*}
u_{n+1,k}^2\le 2M^2\big(u_{n,k}^2+u_{n,k-1}^2\big).
\end{gather*}
This gives
\begin{gather*}
U_{n+1}=\sum_{k=0}^{[(n+1)/2]} u_{n+1,k}^2\le 4M^2 U_n,
\end{gather*}
hence $s_{2n+2}\le 4M^2b_n^2s_{2n}$.
\end{proof}

In the following we shall use the notation from \cite{G:R} about $q$-shifted factorials. For $z,q\in\mathbb C$ we write
\begin{gather}\label{eq:qshift}
(z;q)_n=\prod_{j=0}^{n-1}\big(1-zq^{j}\big),\qquad n=1,2,\ldots,\qquad (z;q)_0=1.
\end{gather}

When $|q|<1$ we define the absolutely convergent infinite product
\begin{gather}\label{eq:q-infty}
(z;q)_{\infty}=\prod_{j=0}^\infty\big(1-zq^j\big).
\end{gather}

\begin{lem}\label{thm:logconcave} Assume $(b_n)$ is log-concave and strictly increasing. Define $q_n:=b_{n-1}/b_n$, $n\ge 1$. Then $0<q_n\le q_{n+1}<1$ and
\begin{gather}\label{eq:ind1}
u_{n,k}\le q_n^{k^2}\prod_{j=1}^k \big(1-q_n^{2j}\big)^{-1}= \frac{q_n^{k^2}}{\big(q_n^2;q_n^2\big)_k},\qquad n\ge 1, \qquad k\ge 0.
\end{gather}
\end{lem}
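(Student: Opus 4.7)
The monotonicity claims $0<q_n\le q_{n+1}<1$ are immediate: strict monotonicity of $(b_n)$ gives $q_n<1$, while log-concavity $b_n^2\ge b_{n-1}b_{n+1}$ rearranges to $q_{n+1}=b_n/b_{n+1}\ge b_{n-1}/b_n=q_n$. In particular the sequence $(q_i)$ is non-decreasing, so for every $j\ge 0$ the finite sub-product satisfies
$$\frac{b_{n-j}}{b_n}=\prod_{i=n-j+1}^{n}q_i\le q_n^{\,j}.$$

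For the bound on $u_{n,k}$, the plan is induction on $n\ge 1$, handling all $k\ge 0$ simultaneously. The cases $k>[n/2]$ are vacuous because the left-hand side is zero by convention, and $k=0$ reduces to the identity $u_{n,0}=1$. The base $n=1$ therefore involves only $u_{1,0}=1$. For the inductive step I would insert the inductive hypothesis together with the product estimates $b_{n-2k}/b_n\le q_n^{2k}$ and $b_{n+1-2k}/b_n\le q_n^{2k-1}$ into the recurrence \eqref{eq:u'}, obtaining
$$u_{n+1,k}\le q_n^{2k}\,\frac{q_n^{k^2}}{(q_n^2;q_n^2)_k}+q_n^{2k-1}\,\frac{q_n^{(k-1)^2}}{(q_n^2;q_n^2)_{k-1}}.$$
The exponent identity $(k-1)^2+(2k-1)=k^2$ lets both summands be factored over the common $q_n^{k^2}/(q_n^2;q_n^2)_{k-1}$, and the bracketed sum $q_n^{2k}/(1-q_n^{2k})+1=1/(1-q_n^{2k})$ then collapses to give the intermediate bound
$$u_{n+1,k}\le \frac{q_n^{k^2}}{(q_n^2;q_n^2)_k}.$$

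To finish, I would upgrade this $q_n$ estimate to the desired $q_{n+1}$ estimate. Writing
$$\frac{q^{k^2}}{(q^2;q^2)_k}=\prod_{j=1}^{k}\frac{q^{2j-1}}{1-q^{2j}},$$
each factor is increasing on $(0,1)$ (its logarithmic derivative $(2j-1)/q+2jq^{2j-1}/(1-q^{2j})$ is positive), so the full expression is increasing in $q$. Applying this with $q_n\le q_{n+1}<1$ yields $u_{n+1,k}\le q_{n+1}^{k^2}/(q_{n+1}^2;q_{n+1}^2)_k$, closing the induction. There is no deep obstacle here: the argument is pure induction, and the only delicate points are getting the exponent arithmetic right so that the two summands combine telescopically, and invoking monotonicity in $q$ at the very end to convert the intermediate $q_n$-bound into the stated $q_{n+1}$-bound.
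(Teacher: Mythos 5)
Your proof is correct and follows essentially the same inductive argument as the paper, resting on the same two facts: the algebraic identity $q^{2k}\tau_k(q)+q^{2k-1}\tau_{k-1}(q)=\tau_k(q)$ where $\tau_k(q)=q^{k^2}/(q^2;q^2)_k$, and the monotonicity of $\tau_k$ in $q$. The only cosmetic difference is the order of these two steps (you collapse the sum at $q_n$ first, then pass to $q_{n+1}$; the paper passes to $q_{n+1}$ term-by-term first), and you supply the explicit logarithmic-derivative verification of monotonicity, which the paper merely asserts.
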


\begin{proof} The inequality trivially holds when $k=0$ independent of $n$, because both sides are~1. Since $u_{1,k}=0$ for $k\ge 1$ the inequality holds in particular for $n=1$ and all $k$.
Notice that by assumption
\begin{gather*}
\frac{b_{n-j}}{b_n}=q_{n-j+1}q_{n-j+2}\cdots q_n\le q_n^j.
\end{gather*}
Let us define
\begin{gather*}
\tau_{n,k}:=\frac{q_n^{k^2}}{\big(q_n^2;q_n^2\big)_k}.
\end{gather*}
Since $q_n\le q_{n+1}$ we have $\tau_{n,k}\le \tau_{n+1,k}$. Suppose now that \eqref{eq:ind1} holds for some $n\ge 1$ and all $k$. For $k\ge 1$ we get by \eqref{eq:u} and the induction hypothesis
\begin{gather*}
 u_{n+1,k} \le q_{n}^{2k}u_{n,k}+ q_{n}^{2k-1}u_{n,k-1} \le q_{n}^{2k}\tau_{n,k}+ q_{n}^{2k-1}\tau_{n,k-1}\\
\hphantom{u_{n+1,k}}{} \le q_{n+1}^{2k}\tau_{n+1,k}+q_{n+1}^{2k-1}\tau_{n+1,k-1}=\tau_{n+1,k}.\tag*{\qed}
\end{gather*}\renewcommand{\qed}{}
\end{proof}

\begin{thm}\label{thm:logconvex}Assume $(b_n)$ is $q$-increasing, cf.\ Definition~{\rm \ref{thm:defqinc}}, and let $U_n$ be given by~\eqref{eq:Un}. Then
 \begin{gather*}
U_n\le \big(q^2;q^2\big)_\infty^{-2} \sum_{k=0}^{\infty}q^{2k^2},\qquad n\ge 0.
\end{gather*}
This holds in particular if $(b_n)$ is log-convex and strictly increasing with $q=b_0/b_1<1$.
\end{thm}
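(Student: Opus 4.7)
The plan is to reduce the theorem to Lemma~\ref{thm:logconcave} via the monotone comparison given in Proposition~\ref{thm:comparison}. The key observation is that Lemma~\ref{thm:logconcave} already delivers the right bound for the auxiliary sequence $\tilde b_n := q^{-n}$ (which is log-linear, hence both log-concave and log-convex, and strictly increasing because $0<q<1$), with the constant ratio $\tilde q_n = \tilde b_{n-1}/\tilde b_n = q$.

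First, I would record that the $q$-increasing hypothesis amounts to $b_{n-1}/b_n\le q = \tilde b_{n-1}/\tilde b_n$ for every $n\ge 1$. Proposition~\ref{thm:comparison} then yields $U_n\le \tilde U_n$, so it suffices to bound $\tilde U_n$.

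Next, applying Lemma~\ref{thm:logconcave} to $(\tilde b_n)$ (with $\tilde q_n\equiv q$) gives
\begin{gather*}
\tilde u_{n,k} \le \frac{q^{k^2}}{(q^2;q^2)_k} \le \frac{q^{k^2}}{(q^2;q^2)_\infty},
\end{gather*}
since every factor $1-q^{2j}$ lies in $(0,1)$, so $(q^2;q^2)_k\ge (q^2;q^2)_\infty$. Combining this with \eqref{eq:u1} yields
\begin{gather*}
\tilde U_n = \sum_{k=0}^{[n/2]} \tilde u_{n,k}^2 \le (q^2;q^2)_\infty^{-2}\sum_{k=0}^\infty q^{2k^2},
\end{gather*}
which, together with $U_n\le \tilde U_n$, is the desired estimate.

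For the ``in particular'' clause, log-convexity of $(b_n)$ rewrites as $b_n/b_{n+1}\le b_{n-1}/b_n$, so the sequence of ratios $(b_{n-1}/b_n)_{n\ge 1}$ is non-increasing. Hence $b_{n-1}/b_n \le b_0/b_1$ for all $n\ge 1$, and strict monotonicity ensures $q := b_0/b_1 < 1$; thus $(b_n)$ is $q$-increasing and the general bound applies. There is no serious obstacle: all the analytic content sits in Lemma~\ref{thm:logconcave}, and the only technical points to watch are that $\tilde b_n = q^{-n}$ satisfies the hypotheses of that lemma and that Proposition~\ref{thm:comparison} is invoked with the correct direction of the inequality.
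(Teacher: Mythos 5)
Your proof is correct, and it takes a somewhat different route from the paper's own. The paper proves Theorem~\ref{thm:logconvex} directly: it uses $b_n/b_{n+l}\le q^l$ to derive the recursive bound $u_{n+1,k}\le q^{2k}u_{n,k}+q^{2k-1}u_{n,k-1}$ and then runs a fresh induction to show $u_{n,k}\le q^{k^2}/(q^2;q^2)_k$. You instead compare $(b_n)$ with the geometric sequence $\tilde b_n=q^{-n}$ via Proposition~\ref{thm:comparison}, observe that $\tilde b_n$ is log-linear (so both log-concave and log-convex) and strictly increasing with constant ratio $q$, and then import the inductive bound from Lemma~\ref{thm:logconcave} applied to $(\tilde b_n)$. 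Both arguments rest on the same induction in the end; your version avoids re-deriving it by routing through Lemma~\ref{thm:logconcave} and the comparison principle, at the modest cost of an extra reduction step. The technical points you flag — the direction of the inequality in Proposition~\ref{thm:comparison} and that $\tilde b_n=q^{-n}$ satisfies the hypotheses of Lemma~\ref{thm:logconcave} with $\tilde q_n\equiv q$ — are handled correctly, and the ``in particular'' clause is treated the same way as in the paper.
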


\begin{proof}For $l\ge 1$ we have
\begin{gather}\label{eq:logcv-q}
\frac{b_n}{b_{n+l}}=\frac{b_n}{b_{n+1}}\frac{b_{n+1}}{b_{n+2}}\cdots\frac{b_{n+l-1}}{b_{n+l}}\le q^l,
\end{gather}
 so from equation \eqref{eq:u} we get
\begin{gather}\label{eq:uq}
u_{n+1,k}\le q^{2k} u_{n,k}+q^{2k-1}u_{n,k-1}.
\end{gather}
We claim that
\begin{gather}\label{eq:ind1*}
u_{n,k}\le q^{k^2}\prod_{j=1}^k\big(1-q^{2j}\big)^{-1}=\frac{q^{k^2}}{\big(q^2;q^2\big)_k},
\end{gather}
which is certainly true for $n=0,k\ge 0$ since $u_{0,0}=1$ and $u_{0,k}=0$ for $k\ge 1$. It is also true when $k=0$ independent of $n$ since $u_{n,0}=1$.

Assume that \eqref{eq:ind1*} holds for some $n$ and all $k$. From \eqref{eq:uq} we get for $k\ge 1$
\begin{gather*}
u_{n+1,k}\le \frac{q^{k^2+2k}}{\big(q^2;q^2\big)_k}+\frac{q^{(k-1)^2+2k-1}}{\big(q^2;q^2\big)_{k-1}}=\frac{q^{k^2}}{\big(q^2;q^2\big)_k},
\end{gather*}
which finishes the proof of \eqref{eq:ind1*}. In particular, using the notation from~\eqref{eq:q-infty}
\begin{gather*}
u_{n,k}\le \frac{q^{k^2}}{\big(q^2;q^2\big)_\infty},
\end{gather*}
hence by \eqref{eq:u1}
\begin{gather*}
U_n\le \big(q^2;q^2\big)_\infty^{-2} \sum_{k=0}^{\infty}q^{2k^2}<\infty.\tag*{\qed}
\end{gather*}\renewcommand{\qed}{}
\end{proof}

Theorem~\ref{thm:logconvex} can be slightly extended.

\begin{thm}\label{thm:evtlogconvex} Assume $(b_n)$ is eventually $q$-increasing, cf.\ Definition~{\rm \ref{thm:defqinc}}. Then the sequen\-ce~$(U_n)$ from~\eqref{eq:Un} is bounded.
\end{thm}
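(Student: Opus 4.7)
The plan is to adapt the inductive proof of Theorem~\ref{thm:logconvex}, absorbing the effect of the finite ``non-$q$-increasing'' initial segment of $(b_n)$ into a single multiplicative constant. Set $\beta_m:=b_{m-1}/b_m$; by hypothesis $\beta_m\le q$ for $m\ge n_0$, while the finitely many remaining ratios are controlled by $M:=\max\bigl\{1,\max_{1\le m<n_0}\beta_m/q\bigr\}$, so $\beta_m\le Mq$ for every $m\ge 1$. Since any product $b_{n-l}/b_n=\prod_{i=n-l+1}^{n}\beta_i$ contains at most $n_0-1$ ``bad'' factors (those with $i<n_0$), splitting the product yields
\begin{gather*}
\frac{b_{n-l}}{b_n}\le Dq^l,\qquad D:=M^{n_0-1},
\end{gather*}
uniformly for $0\le l\le n$. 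Substituting into~\eqref{eq:u'} gives
\begin{gather*}
u_{n+1,k}\le Dq^{2k}u_{n,k}+Dq^{2k-1}u_{n,k-1},\qquad n\ge 2k-1,\ k\ge 1,
\end{gather*}
together with $u_{n,0}=1$.

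Set $Y_k:=\sup_{n\ge 0}u_{n,k}$. I would first prove $Y_k<\infty$ by induction on $k$. Given $Y_{k-1}<\infty$, observe that for $n\ge n_0+2k-1$ every index $i$ entering $b_{n-2k}/b_n$ satisfies $i\ge n_0$, so the sharper inequality $u_{n+1,k}\le q^{2k}u_{n,k}+q^{2k-1}Y_{k-1}$ holds (no $D$); since $q^{2k}<1$, iterating in $n$ forces $u_{n,k}$ to be eventually bounded by $u_{n_0+2k-1,k}+q^{2k-1}Y_{k-1}/(1-q^{2k})$, while the finitely many values for $n<n_0+2k-1$ are bounded by inspection.

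To obtain a bound summable in $k$, pick $k_0$ with $Dq^{2k_0}<1$. For $k\ge k_0$, applying the uniform $D$-inequality with initial value $u_{2k-1,k}=0$, a short induction in $n$ based on the identity $Dq^{2k}L+Dq^{2k-1}Y_{k-1}=L$ for $L:=Dq^{2k-1}Y_{k-1}/(1-Dq^{2k})$ shows $u_{n,k}\le L$ for all $n$, hence
\begin{gather*}
Y_k\le\frac{Dq^{2k-1}}{1-Dq^{2k}}Y_{k-1},\qquad k\ge k_0.
\end{gather*}
Telescoping, and noting that $\sum_{i=1}^{k-k_0}(2(k_0+i)-1)=k^2-k_0^2$ in the exponent of $q$, yields $Y_k\le C\,D^kq^{k^2}$ for some constant $C$ and all large $k$. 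Thus $\sum_kY_k^2<\infty$, and since $U_n=\sum_{k=0}^{[n/2]}u_{n,k}^2\le\sum_k Y_k^2$, the sequence $(U_n)$ is bounded.

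The main difficulty is that when $D>1/q^2$, no single ansatz $u_{n,k}\le Aq^{k^2}/(q^2;q^2)_k$ parallel to the bound in Theorem~\ref{thm:logconvex} can work uniformly for all $k\ge 1$, which is why the argument must separate a ``head'' in $k$ (handled via the stronger inequality available once $n\ge n_0+2k-1$) from a ``tail'' in $k$ (where the super-exponential factor $q^{k^2}$ overwhelms the geometric factor $D^k$).
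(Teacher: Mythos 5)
Your proof is correct, but it follows a genuinely different route from the paper's. The paper proceeds by constructing a comparison sequence: it increases $n_0$ so that $b_{n_0}>\max_{0\le n<n_0}b_n$, replaces the initial segment by $\tilde b_n=b_{n_0}p^{n_0-n}$ for a suitable $p\in(q,1)$, observes that $(\tilde b_n)$ is genuinely $p$-increasing with $b_n\le\tilde b_n$, invokes Proposition~\ref{thm:increasofbn} (monotonicity of even moments via $s_n=(J^n\delta_0,\delta_0)$) to get $s_{2n}\le\tilde s_{2n}$, and then reduces to Theorem~\ref{thm:logconvex}. Your argument instead works directly with the recursion~\eqref{eq:u'}, absorbing the finitely many bad ratios into a uniform constant $D=M^{n_0-1}$, and then handles the resulting inequality $u_{n+1,k}\le Dq^{2k}u_{n,k}+Dq^{2k-1}u_{n,k-1}$ by a two-stage estimate: a crude induction establishes $Y_k:=\sup_n u_{n,k}<\infty$ for each $k$, after which, for $k$ large enough that $Dq^{2k}<1$, the fixed-point bound $Y_k\le Dq^{2k-1}Y_{k-1}/(1-Dq^{2k})$ telescopes to $Y_k\le CD^kq^{k^2}$, and $U_n\le\sum_kY_k^2<\infty$. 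Your closing remark is accurate: the single ansatz $u_{n,k}\le Aq^{k^2}/(q^2;q^2)_k$ from Theorem~\ref{thm:logconvex} picks up an extra factor $D>1$ at each inductive step and does not close, which is exactly why your head/tail split in $k$ is needed. What the paper's route buys is brevity and reuse of existing machinery (Proposition~\ref{thm:increasofbn} and Theorem~\ref{thm:logconvex}); what yours buys is a self-contained, purely elementary argument that never leaves the recursion for the $u_{n,k}$ and does not appeal to the Jacobi-operator representation of the moments.
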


\begin{proof} By \eqref{eq:qinc} we see that $(b_n)$ is strictly increasing for $n\ge n_0$ and tends to infinity. For $k\ge 1$ we then have
\begin{gather*}
\max\{b_n\colon 0\le n<n_0+k\}=\max\left(\max\{b_n\colon 0\le n<n_0\}, b_{n_0+k-1}\right)
\end{gather*}
and the latter is $<b_{n_0+k}$ for $k$ sufficiently large.

By increasing $n_0$ if necessary we may therefore assume that
\begin{gather*}
b_{n_0}> \max\{b_n\colon 0\le n<n_0\}.
\end{gather*}

Defining
\begin{gather*}
q_1=\max_{0\le n<n_0} \left(\frac{b_n}{b_{n_0}}\right)^{1/(n_0-n)}, \qquad p=\max(q,q_1),
\end{gather*}
we have $0<p<1$.

Let
\begin{gather*}
\tilde{b}_n:=\begin{cases} b_{n_0}p^{n_0-n}, & 0\le n<n_0,\\
b_n, & n_0\le n.
\end{cases}
\end{gather*}
Then $\big(\tilde{b}_n\big)$ is $p$-increasing and $b_n\le\tilde{b}_n$. By Proposition~\ref{thm:increasofbn} we get $s_{2n}\le\tilde{s}_{2n}$ and for $n >n_0$ we find
\begin{gather*}
U_n=\frac{s_{2n}}{b_0^2b_1^2\cdots b_{n-1}^2}\le \frac{\tilde{b}_0^2\cdots \tilde{b}_{n_0-1}^2}{b_0^2\cdots b_{n_0-1}^2} \tilde{U}_n.
\end{gather*}
This shows the boundedness of $(U_n)$ since $(\tilde{U}_n)$ is bounded by Theorem~\ref{thm:logconvex}.
\end{proof}

\begin{prop}\label{thm:unk-logconcave}Assume $(b_n)$ is strictly increasing and log-concave. Then $u_{n,k}$ from \eqref{eq:u0} satisfy
\begin{gather}\label{eq:unk-logconcave}
u_{n,0}=1, \qquad u_{n,k}\le \prod_{j=1}^k {b_{n-2k+j-1}\over b_{n-j}-b_{n-2k+j-2}},\qquad 1\le k\le {n\over 2}.
\end{gather}
\end{prop}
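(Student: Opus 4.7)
The identity $u_{n,0}=1$ is \eqref{eq:lead} and was already noted below \eqref{eq:u1}, so only the bound for $k\ge 1$ is at stake. Denote the right-hand side of \eqref{eq:unk-logconcave} by $T_{n,k}$, set $T_{n,0}:=1$, and adopt the conventions $u_{n,k}=T_{n,k}=0$ for $k>n/2$ together with $b_j=0$ for $j<0$ (these are already in force for \eqref{eq:u'}). The plan is to prove $u_{n,k}\le T_{n,k}$ by induction on $n$. The case $n<2k$ is vacuous, and the base case $n=2k$ reduces to the direct verification of $s_{2k}/(b_0\cdots b_{2k-1})\le T_{2k,k}$ using the moment formulas (for example, when $k=2$ the base case boils down to $b_0b_2\le b_0^2+b_1^2$, which follows from log-concavity).

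For the inductive step, substituting the hypotheses $u_{n,k}\le T_{n,k}$ and $u_{n,k-1}\le T_{n,k-1}$ into the recurrence \eqref{eq:u'} reduces the claim $u_{n+1,k}\le T_{n+1,k}$ to the purely algebraic inequality
\begin{equation*}
\frac{b_{n-2k}}{b_n}\,T_{n,k}+\frac{b_{n+1-2k}}{b_n}\,T_{n,k-1}\le T_{n+1,k}. \qquad(\ast)
\end{equation*}
Dividing by $T_{n+1,k}$, the ratio $T_{n,k}/T_{n+1,k}$ becomes an explicit product whose $j$-th factor is
\begin{equation*}
\frac{b_{n-2k+j-1}}{b_{n-2k+j}}\cdot\frac{b_{n+1-j}-b_{n-2k+j-1}}{b_{n-j}-b_{n-2k+j-2}},
\end{equation*}
while $T_{n,k-1}/T_{n+1,k}$ admits an analogous expansion in which the ``new'' $j=k$ factor of $T_{n+1,k}$ (absent from $T_{n,k-1}$) is what the prefactor $b_{n+1-2k}/b_n$ is designed to absorb.

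The main obstacle is $(\ast)$. As a test case, direct expansion shows that for $k=1$ inequality $(\ast)$ collapses, after cancellation, to
\begin{equation*}
b_nb_{n-2}+b_{n-1}b_{n-3}\le b_{n-1}^2+b_{n-2}^2,
\end{equation*}
which is simply the sum of two instances of log-concavity $b_jb_{j-2}\le b_{j-1}^2$. I expect the general case to follow from the same mechanism: a systematic telescoping along the explicit product above, with log-concavity applied term by term in pairs. Strict monotonicity of $(b_n)$ enters both to guarantee that every denominator $b_{n-j}-b_{n-2k+j-2}$ is strictly positive, and to make each of the factors $b_{n-2k+j-1}/b_{n-2k+j}$ strictly less than one. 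If the head-on expansion proves unwieldy for large $k$, a natural fall-back is a secondary induction on $k$ establishing a more refined identity for $T_{n,k}/T_{n+1,k}$ that isolates the log-concave step recursively; in either approach the only inequality about $(b_n)$ invoked is $b_{j-1}b_{j+1}\le b_j^2$ (used finitely many times) together with strict monotonicity.
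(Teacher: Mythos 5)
Your inductive framework is the same as the paper's: induction on $n$, substitute the induction hypothesis into the recurrence \eqref{eq:u'}, and reduce to the purely algebraic statement $(\ast)$. Your verification of $(\ast)$ for $k=1$ is correct, and it is indeed the right test case: with the notation $T_{n,k}$ for the right-hand side of \eqref{eq:unk-logconcave}, what you computed is precisely the inequality $T_{n,1}\le T_{n+1,1}$, and your derivation of $b_nb_{n-2}+b_{n-1}b_{n-3}\le b_{n-1}^2+b_{n-2}^2$ from two instances of log-concavity is sound. Your $n=2k$ base-case check (e.g., $b_0b_2\le b_0^2+b_1^2$ for $k=2$) is also correct, although it is redundant once the conventions $u_{n,k}=T_{n,k}=0$ for $k>n/2$ and $b_j=0$ for $j<0$ are in force, since the recurrence then produces the boundary value automatically.

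The genuine gap is that $(\ast)$ for $k\ge 2$ is never proved; you describe a hope (``systematic telescoping along the explicit product, with log-concavity applied term by term in pairs'') but do not carry it out, and the hope as stated points slightly in the wrong direction. There is no telescoping of consecutive factors. What actually closes $(\ast)$ is the following two observations, which exactly match the explicit $j$-th factor of $T_{n,k}/T_{n+1,k}$ you wrote down.
\begin{enumerate}
\item \emph{Each factor of $T_{n,k}/T_{n+1,k}$ is $\le 1$.} Equivalently, for $1\le j\le k$,
\begin{gather*}
\frac{b_{n-2k+j-1}}{b_{n-j}-b_{n-2k+j-2}}\le \frac{b_{n-2k+j}}{b_{n+1-j}-b_{n-2k+j-1}}.
\end{gather*}
Writing $a=n-2k+j-1$, $b=n-j$ (so $a\le b$ for $j\le k$), this is $b_a(b_{b+1}-b_a)\le b_{a+1}(b_b-b_{a-1})$, which follows from two uses of log-concavity: $b_{a+1}b_{a-1}\le b_a^2$ gives $-b_a^2\le -b_{a+1}b_{a-1}$, and $b_ab_{b+1}\le b_{a+1}b_b$ follows from the monotonicity of $b_j/b_{j+1}$. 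Multiplying over $j=1,\dots,k$ gives $T_{n,k}\le T_{n+1,k}$. Note that strict monotonicity is used only to keep the denominators positive; it is log-concavity (used twice per factor, $2k$ times total), not the individual ratios $b_{n-2k+j-1}/b_{n-2k+j}<1$, that makes each factor $\le 1$.
\item \emph{The $T_{n,k-1}$ term is handled by an exact re-indexing, not an inequality.} Replacing $j$ by $j-1$ in the product defining $T_{n,k-1}$ shows $T_{n,k-1}=\prod_{j=2}^{k}\frac{b_{n-2k+j}}{b_{n+1-j}-b_{n-2k+j-1}}$, i.e.\ $T_{n,k-1}$ equals $T_{n+1,k}$ with its $j=1$ factor removed. (In your sketch you identified the ``new'' factor as $j=k$; it is in fact $j=1$.) Hence $T_{n,k-1}=T_{n+1,k}\cdot\frac{b_n-b_{n-2k}}{b_{n-2k+1}}$, and
\begin{gather*}
\frac{b_{n-2k}}{b_n}T_{n+1,k}+\frac{b_{n+1-2k}}{b_n}T_{n,k-1}=T_{n+1,k}\left[\frac{b_{n-2k}}{b_n}+\frac{b_n-b_{n-2k}}{b_n}\right]=T_{n+1,k}.
\end{gather*}
\end{enumerate}
Combining (i) and (ii) yields $(\ast)$ and completes the induction. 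This is exactly how the paper argues; once you write the factor of $T_{n,k}/T_{n+1,k}$ explicitly, the remaining work is the two log-concavity applications per factor in step (i) and the clean collapse in step (ii), not a head-on expansion.
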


\begin{proof}We will use induction with respect to $n$. Equation~\eqref{eq:unk-logconcave} is certainly true for $n=1$. Assume the conclusion is satisfied for $n\ge 1$. For $1\le j\le k$, $k\ge 2$ we use log-concavity to obtain
\begin{gather*}
\frac{b_{n-2k+j-1}}{b_{n-j}-b_{n-2k+j-2}}\le \frac{b_{n-2k+j}}{b_{n+1-j}-b_{n-2k+j-1}},
\end{gather*}
hence by the induction hypothesis
\begin{gather*}
u_{n+1,k} = {b_{n-2k}\over b_n}u_{n,k}+ {b_{n-2k+1}\over b_n}u_{n,k-1}\\
\hphantom{u_{n+1,k}}{} \le {b_{n-2k}\over b_n}\prod_{j=1}^k {b_{n-2k+j-1}\over b_{n-j}-b_{n-2k+j-2}}+
{b_{n-2k+1}\over b_n}\prod_{j=1}^{k-1} {b_{n-2k+j+1}\over b_{n-j}-b_{n-2k+j}}\\
\hphantom{u_{n+1,k}}{} \le {b_{n-2k}\over b_n}\prod_{j=1}^k {b_{n-2k+j}\over b_{n+1-j}-b_{n-2k+j-1}}+
{b_{n-2k+1}\over b_n}\prod_{j=2}^{k} {b_{n-2k+j}\over b_{n+1-j}-b_{n-2k+j-1}} \\
\hphantom{u_{n+1,k}}{} = \prod_{j=1}^k {b_{n-2k+j}\over b_{n+1-j}-b_{n-2k+j-1}}\left [{b_{n-2k}\over b_n}+{b_{n-2k+1}\over b_n}{b_n-b_{n-2k}\over b_{n-2k+1}}\right ]\\
\hphantom{u_{n+1,k}}{} = \prod_{j=1}^k {b_{n-2k+j}\over b_{n+1-j}-b_{n-2k+j-1}}.
\end{gather*}
For $k=1$ we similarly have
\begin{gather*}
u_{n+1,1} = {b_{n-2}\over b_n}u_{n,1}+{b_{n-1}\over b_n}\le {b_{n-2}\over b_n}{b_{n-2}\over b_{n-1}-b_{n-3}}+{b_{n-1}\over b_n}\\
\hphantom{u_{n+1,1}}{} \le {b_{n-2}\over b_n}{b_{n-1}\over b_{n}-b_{n-2}}+{b_{n-1}\over b_n}={b_{n-1}\over b_{n}-b_{n-2}}.\tag*{\qed}
\end{gather*}\renewcommand{\qed}{}
\end{proof}

\begin{thm}\label{thm:thm2}Assume $(b_n)$ satisfies
\begin{gather}\label{eq:b_{n-1}/b_n}
{b_{n-1}\over b_{n}}\le {\rm e}^{-\beta/n}, \qquad n\ge 1,\qquad \beta>0.
\end{gather}
The quantity $U_n$ from \eqref{eq:Un} satisfies
\begin{gather}\label{eq:rootUn}
\limsup_{n\to\infty} U_n^{1/n}\le {\rm e}^{2u(\beta)},
\end{gather}
where
\begin{eqnarray}\label{eq:mbeta}
u(\beta):=\max_{2^{-1/\beta}\le x\le 1} (x+1) \int_x^1 \log\left(\frac{y^\beta}{1-y^\beta}\right)\frac{{\rm d}y}{(1+y)^2}>0.
\end{eqnarray}
The function $u(\beta)$ is decreasing with $\lim\limits_{\beta\to 0}u(\beta)=\infty$, $\lim\limits_{\beta\to\infty}u(\beta)=0$.
\end{thm}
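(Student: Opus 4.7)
The plan is to derive an asymptotic pointwise bound on $u_{n,k}$ from the recurrence \eqref{eq:u'} and then sum via \eqref{eq:u1}. First, iterating the hypothesis \eqref{eq:b_{n-1}/b_n} and using the Riemann-sum estimate $\sum_{i=0}^{j-1} 1/(n-i) \ge \log((n+1)/(n-j+1))$, one gets $b_{n-j}/b_n \le ((n-j+1)/(n+1))^\beta$ for $0 \le j \le n$, so \eqref{eq:u'} yields the two-term inequality
\begin{gather*}
u_{n+1,k} \le \left(\frac{n-2k+1}{n+1}\right)^\beta u_{n,k} + \left(\frac{n-2k+2}{n+1}\right)^\beta u_{n,k-1}.
\end{gather*}

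The core of the argument is to show by induction on $n$ the exponential bound $u_{n,k} \le C_n \, e^{n \Phi(1-2k/n)}$ for some sub-exponentially growing $C_n$, where
\begin{gather*}
\Phi(x) = (x+1) \int_x^1 \log\big(y^\beta/(1-y^\beta)\big) \frac{dy}{(1+y)^2}.
\end{gather*}
Writing $x = 1-2k/n$, Taylor expanding to first order in $1/n$ and taking logarithms in the induction step, the propagation condition reduces to the continuum inequality $\Phi(x) + (1-x)\Phi'(x) \ge \beta \log x + \log(1 + e^{2\Phi'(x)})$. Writing $h(y) := \log(y^\beta/(1-y^\beta)) = \beta \log y - \log(1-y^\beta)$, elementary algebra recasts this inequality as $2\Phi'(x) + h(x) \ge 0$. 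A direct computation from the integral gives the identity $(1-x)\Phi'(x) + \Phi(x) = 2\Phi'(x) + h(x)$, and one further differentiation yields $\frac{d}{dx}[2\Phi'(x) + h(x)] = h'(x)(x-1)/(x+1) \le 0$ on $(2^{-1/\beta},1)$. Combined with the limit $2\Phi'(x) + h(x) \to 0$ as $x \to 1$, this forces $2\Phi'+h \ge 0$ throughout $[2^{-1/\beta},1]$, which is exactly what is needed. In the complementary region $x < 2^{-1/\beta}$ a separate crude bound on $u_{n,k}$ suffices, since the maximum of $\Phi$ is achieved in $[2^{-1/\beta},1]$ (note $\Phi(2^{-1/\beta}) > 0$ and $\Phi'(2^{-1/\beta}) = \int_{2^{-1/\beta}}^1 h(y)\,dy/(1+y)^2 > 0$).

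Summing via \eqref{eq:u1}, $U_n \le (\lfloor n/2 \rfloor + 1) C_n^2 \, e^{2n \max_{[2^{-1/\beta},1]} \Phi}$, giving $\limsup_n U_n^{1/n} \le e^{2 u(\beta)}$ as required. The qualitative properties of $u(\beta)$ follow by inspection: $h(y) = -\log(y^{-\beta}-1)$ is pointwise decreasing in $\beta$ and the domain $[2^{-1/\beta},1]$ shrinks with $\beta$, yielding monotone decrease; the substitution $y = 1-s/\beta$ shows $u(\beta) = O(1/\beta) \to 0$ as $\beta \to \infty$, while pointwise divergence of the integrand at each fixed $y \in (0,1)$ gives $u(\beta) \to \infty$ as $\beta \to 0^+$.

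The main obstacle is the rigorous control of lower-order terms in the inductive step: the continuum analysis captures only the leading asymptotics, and one must manage the $O(1/n)$ discretization errors in the Taylor expansions (and boundary regions near $k = 0$ and $k = \lfloor n/2 \rfloor$) so that $C_n$ stays sub-exponential. The a priori mysterious appearance of $(x+1)$ and $(1+y)^{-2}$ in $\Phi$ is explained by observing that the companion identity $(1+x)\Phi'(x) - \Phi(x) = -h(x)$ together with $\Phi(1) = 0$ is a first-order linear ODE whose unique solution, via the integrating factor $1/(1+x)$, is exactly the integral displayed above.
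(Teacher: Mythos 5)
Your approach is genuinely different from the paper's, and it has conceptual appeal, but as written it has a real gap at its core. The paper proves the result by first reducing via the comparison Proposition~\ref{thm:comparison} to the concrete sequence $\widehat{b}_n=(n+1)^\beta$, then invoking the \emph{exact} product bound of Proposition~\ref{thm:unk-logconcave} for log-concave sequences, $u_{n,k}\le\prod_{j=1}^k b_{n-2k+j-1}/(b_{n-j}-b_{n-2k+j-2})$, and finally taking $n$-th roots and treating the resulting sum of logarithms as a Riemann sum; the substitution $y=(\gamma+x)/(\gamma+2-x)$ with $\gamma$ the accumulation point of $n/k_n-2$ is what produces the integral in $u(\beta)$, and the restriction of the maximum to $[2^{-1/\beta},1]$ follows because the integrand is nonnegative there. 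That route sidesteps any inductive propagation of an \emph{a posteriori} bound: the product bound is already established, one only has to analyze its $n$-th root.

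You instead try to prove directly by induction on $n$ that $u_{n,k}\le C_n e^{n\Phi(1-2k/n)}$, and you correctly identify the continuum propagation condition, reduce it via the identity $(1-x)\Phi'+\Phi=2\Phi'+h$ to $2\Phi'+h\ge 0$, and verify this by showing $\tfrac{d}{dx}(2\Phi'+h)=(1-x)\Phi''=-(1-x)h'/(1+x)\le 0$ together with the boundary value $0$ at $x=1$. That computation is correct, and the integrating-factor observation $(1+x)\Phi'-\Phi=-h$, $\Phi(1)=0$, is a genuinely illuminating explanation for why the $(x+1)$ and $(1+y)^{-2}$ appear. But the inductive step is not actually carried out, and you concede as much. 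The difficulty is not cosmetic. The inequality $2\Phi'+h\ge 0$ is tight (it converges to $0$) precisely as $x\to 1$, and in that same limit $\Phi'(x)\to-\infty$, so $\Phi$ is not uniformly $C^1$ on the grid of $x$-values that appear; a naive Taylor expansion with a uniform $O(1/n)$ remainder is invalid there, and there is no uniform slack to absorb the error. The grid itself also shifts with $n$ (you compare $\Phi(1-2k/(n+1))$ with $\Phi(1-2k/n)$ and $\Phi(1-2(k-1)/n)$), so one needs to control a mesh-refinement error on top of the recursion error. One would have to introduce an explicit boundary layer near $x=1$ (small $k$) with a separate crude bound, quantify the positive slack in $2\Phi'+h$ outside that layer, and then track the accumulation of $O(1/n)$ discretization errors to show the resulting $C_n$ is sub-exponential. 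None of this is done, and until it is, the claimed bound $\limsup U_n^{1/n}\le e^{2u(\beta)}$ is not established by this argument. The asymptotic claims about $u(\beta)$ at the end are essentially correct but were also not the hard part.
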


\begin{proof} By \eqref{eq:b_{n-1}/b_n} we get
\begin{gather*}
b_n\ge b_0{\rm e}^{\beta(1+1/2+\dots +1/n)},
\end{gather*}
showing that $(b_n)$ is strictly increasing to infinity.

Defining{\samepage
\begin{gather*}
\tilde{b}_0:=b_0,\qquad \tilde{b}_n:=b_0 {\rm e}^{\beta(1+1/2+\dots + 1/n)},\qquad n\ge 1,
\end{gather*}
then $\big(\tilde{b}_n\big)$ is strictly increasing and log-concave.}

For $\widehat{b}_n=(n+1)^\beta$ we then have
\begin{gather}\label{eq:3bn}
\frac{b_{n-1}}{b_n}\le \frac{\tilde{b}_{n-1}}{\tilde{b}_n}\le \frac{\widehat{b}_{n-1}}{\widehat{b}_n},
\end{gather}
so by Proposition~\ref{thm:comparison}
\begin{gather*}
U_n\le \tilde{U}_n\le \widehat{U}_n.
\end{gather*}
We claim that
\begin{gather}\label{eq:lcavsimp}
\prod_{j=1}^k {\tilde{b}_{n-2k+j-1}\over \tilde{b}_{n-j}-\tilde{b}_{n-2k+j-2}}\le \prod_{j=1}^k {\widehat{b}_{n-2k+j-1}\over\widehat{b}_{n-j}-\widehat{b}_{n-2k+j-2}}=:\sigma_{n,k},
\end{gather}
which follows from the second inequality in \eqref{eq:3bn} because of
\begin{gather*}
{\tilde{b}_{n-2k+j-1}\over \tilde{b}_{n-j}-\tilde{b}_{n-2k+j-2}}=\frac{\tilde{b}_{n-2k+j-1}/\tilde{b}_{n-j}}{1-\tilde{b}_{n-2k+j-2}/\tilde{b}_{n-j}}
\end{gather*}
and a similar expression for $\widehat{b}_n$. Let $k_n$ be the smallest index so that
\begin{gather*}
\sigma_{n,k_n}=\max\{\sigma_{n,k}\colon 1\le k\le n/2\}.
\end{gather*}
We have $n=k_n(\gamma_n+2),$ where $\gamma_n\ge 0$.
By Proposition~\ref{thm:unk-logconcave} and the inequality \eqref{eq:lcavsimp} we have $\tilde{u}_{n,k}\le \sigma_{n,k}$, hence
\begin{gather}
 \limsup_n U_n^{1/n}\le \limsup_n \tilde{U}_n^{1/n}\le \limsup_n\left( \sum_{k=0}^{[n/2]}\sigma_{n,k}^2\right )^{1/n}\le \limsup_n\big(1+[n/2]\sigma_{n,k_n}^2\big)^{1/n}\nonumber\\
\hphantom{\limsup_n U_n^{1/n}}{} \le \limsup_n (1+[n/2])^{1/n}\sigma_{n,k_n}^{2/n} \le \limsup_n \sigma_{n,k_n}^{2/n},\label{eq:lcavsimp1}
\end{gather}
where we have used that $\sigma_{n,0}=1$ and
\begin{gather*}
\sigma_{n,k_n}\ge \sigma_{n,1}=\frac{(n-1)^\beta}{n^\beta-(n-2)^\beta}\to\infty
\end{gather*}
for $n\to\infty$. By the right-hand side of \eqref{eq:lcavsimp} we get
\begin{gather*}
\limsup_n\sigma_{n,k_n}^{1/n}= \limsup_n \left (\prod_{j=0}^{k_n-1} {\widehat{b}_{n-2k_n+j}\over \widehat{b}_{n-j-1}-\widehat{b}_{n-2k_n+j-1}}\right )^{1/n}.
\end{gather*}

Furthermore
\begin{gather}\label{eq:hpa1}
\log \left(\prod_{j=0}^{k_n-1} {\widehat{b}_{n-2k_n+j}\over \widehat{b}_{n-j-1}-\widehat{b}_{n-2k_n+j-1}}\right)^{1/n} =\frac{1}{(\gamma_n+2)k_n}\sum_{j=0}^{k_n-1} E_n(\beta,j),
\end{gather}
where
\begin{gather}\label{eq:hpa2}
E_n(\beta,j)= \beta\log (\gamma_n+{j/k_n}+1/k_n) -\log \big[(\gamma_n+2-{j/k_n} )^\beta - (\gamma_n+{j/ k_n})^\beta \big].
\end{gather}

Let $\gamma$ be an accumulation point of $(\gamma_n)$.

We consider first the case, where $\gamma=\infty$ is the only accumulation point, so $\gamma_n\to\infty$. We have
\begin{gather}\label{eq:hp1}
\log(\gamma_n+{j/ k_n}+1/k_n)\le \log(\gamma_n+1).
\end{gather}

The inequality $(0<y<x)$
\begin{gather*}
{x^\beta-y^\beta\over x-y}\ge \begin{cases}
 \beta y^{\beta-1}, & \beta\ge 1,\\
 \beta x^{\beta-1}, & 0<\beta<1,
 \end{cases}
\end{gather*}
implies
\begin{gather}
 (\gamma_n+2-{j/ k_n} )^\beta- (\gamma_n+{j/ k_n})^\beta \ge \begin{cases}
2\beta (1-j/k_n)(\gamma_n+{j/ k_n} )^{\beta-1},& \beta\ge 1, \\
2\beta (1-j/k_n)(\gamma_n+2-{j/ k_n} )^{\beta-1}, & 0<\beta<1
\end{cases} \nonumber\\
\hphantom{(\gamma_n+2-{j/ k_n} )^\beta- (\gamma_n+{j/ k_n})^\beta}{} \ge \begin{cases}
 2\beta (1-j/k_n)\gamma_n^{\beta-1}, &\beta \ge 1,\\
 2\beta (1-j/k_n)(\gamma_n+2)^{\beta-1}, & 0<\beta<1.
\end{cases}\label{eq:hp2}
\end{gather}
We have
\begin{gather}\label{eq:hp3}
-\frac{1}{k_n}\sum_{j=0}^{k_n-1}\log (1-j/k_n)\le -\int_0^1\log x{\rm d}x=1.
\end{gather}
By \eqref{eq:hp1}, \eqref{eq:hp2} and \eqref{eq:hp3} we get that the right-hand side of \eqref{eq:hpa1} is bounded above by $C\log(\gamma_n+2)/(\gamma_n+2)$ for suitable $C>0$, hence tends to 0. In this case~\eqref{eq:lcavsimp1} gives
\begin{gather*}
\limsup_n U_n^{1/n}\le 1,
\end{gather*}
and \eqref{eq:rootUn} holds.

Assume now that $\gamma<\infty$. Then, in view of \eqref{eq:hpa1} and \eqref{eq:hpa2} the quantity
\begin{gather*}
I={1\over \gamma+2} \int_0^1 \big\{\beta\log (\gamma+x) -\log \big[(\gamma+2-x)^\beta-(\gamma+x)^\beta\big]\big\}{\rm d}x
\end{gather*}
is an accumulation point of \eqref{eq:hpa1}. The substitution
\begin{gather*}
 y={\gamma+x\over \gamma+2-x}
\end{gather*}
 leads to
\begin{gather*}
I ={2\gamma+2\over \gamma+2}\int_{\gamma\over \gamma+2}^1\log \left({y^\beta\over 1-y^\beta}\right) {{\rm d}y\over (1+y)^2}.
\end{gather*}
Set $\sigma=\gamma /(\gamma+2)$. Then
\begin{gather*}
I =(\sigma+1)\int_\sigma^1\log \left({y^\beta\over 1-y^\beta}\right) {{\rm d}y\over (1+y)^2},
\end{gather*}
hence
\begin{gather*}
I \le \max_{0\le x\le 1} \left \{(x+1)\int_x^1\log \left({y^\beta\over 1-y^\beta}\right) {{\rm d}y\over (1+y)^2}\right \}.
\end{gather*}
Clearly the maximum is attained for $x\ge 2^{-1/\beta}$, as in this case the integrated function is nonnegative. Now~\eqref{eq:rootUn} follows.

The last assertion is easy to prove.
\end{proof}

\begin{thm}\label{thm:thm2a} Assume $(b_n)$ satisfies
\begin{gather}\label{eq:hpb1}
\frac{b_{n-1}}{b_n}\le {\rm e}^{-f(n)},\qquad n\ge n_0\ge 1,
\end{gather}
where $f(n)>0$ for $n\ge n_0$ and
\begin{gather*}
\alpha:=\liminf_{n\to\infty} nf(n)>0.
\end{gather*}
The quantity $U_n$ from \eqref{eq:Un} satisfies
\begin{gather}\label{eq:rootUn1}
\limsup_n U_n^{1/n}\le {\rm e}^{2u(\alpha)},
\end{gather}
where $u(\alpha)$ is defined in \eqref{eq:mbeta} and $u(\infty):=0$.
\end{thm}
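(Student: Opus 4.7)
I plan to reduce Theorem~\ref{thm:thm2a} to Theorem~\ref{thm:thm2} by approximation in the parameter. Concretely, for every $\beta\in(0,\alpha)$ I will construct an auxiliary sequence $(\tilde b_n)$ that coincides with $(b_n)$ on a tail, satisfies the \emph{global} hypothesis of Theorem~\ref{thm:thm2}, and shares the same exponential growth rate of $U_n$. Theorem~\ref{thm:thm2} will then give $\limsup U_n^{1/n}\le e^{2u(\beta)}$, and letting $\beta\nearrow\alpha$ (using that $u$ is decreasing on $(0,\infty)$ with $u(\infty):=0$, as recorded in Theorem~\ref{thm:thm2}) will yield $\limsup U_n^{1/n}\le e^{2u(\alpha)}$.

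\textbf{Construction.} From $\alpha=\liminf_n nf(n)>\beta$, pick $n_1\ge n_0$ with $b_{n-1}/b_n\le e^{-\beta/n}$ for all $n\ge n_1$, and set
\[
\tilde b_n:=\begin{cases} b_{n_1}\exp\!\bigl(-\beta(H_{n_1}-H_n)\bigr), & 0\le n\le n_1,\\ b_n, & n\ge n_1,\end{cases}\qquad H_n=\sum_{k=1}^n 1/k.
\]
A direct computation shows $\tilde b_{n-1}/\tilde b_n=e^{-\beta/n}$ on $[1,n_1]$ by construction and $\tilde b_{n-1}/\tilde b_n=b_{n-1}/b_n\le e^{-\beta/n}$ on $[n_1{+}1,\infty)$ by the choice of $n_1$. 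Hence $(\tilde b_n)$ satisfies the global hypothesis of Theorem~\ref{thm:thm2}, which gives $\limsup \tilde U_n^{1/n}\le e^{2u(\beta)}$.

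\textbf{From $\tilde U_n$ to $U_n$.} Since $b_n=\tilde b_n$ on $[n_1,\infty)$, the recursion \eqref{eq:u'} for $u_{n,k}$ and $\tilde u_{n,k}$ employs identical coefficients whenever a step $(m,k')\to(m{+}1,\cdot)$ satisfies $m-2k'\ge n_1$. Forward propagation of \eqref{eq:u'} from the base line $m=n_1$ therefore expresses both $u_{n,k}$ and $\tilde u_{n,k}$, in the ``small-$k$'' regime $k\le(n-n_1)/2$, as identical non-negative linear combinations of the initial values at $m=n_1$. Setting $A:=\max_{0\le j\le n_1/2} u_{n_1,j}/\tilde u_{n_1,j}<\infty$ (all $\tilde u_{n_1,j}$ being positive by iterated positivity of \eqref{eq:u'}) yields $u_{n,k}\le A\,\tilde u_{n,k}$ throughout this regime. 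The complementary ``boundary'' indices $k>(n-n_1)/2$ form a band of width at most $n_1/2$; they are controlled directly from \eqref{eq:u'}, where at most $O(n_1)$ of the recursion steps along any backward trajectory use mismatched coefficients (each contributing a bounded constant factor $R:=\sup_{0\le m<n_1}(b_{m}/\tilde b_{m})\vee(\tilde b_m/b_m)$), so the boundary contribution to $U_n=\sum_k u_{n,k}^2$ is at most polynomial in $n$ times $\sigma_{n,k_n}^2$. Combining gives $U_n\le (A^2+o(1))\,\tilde U_n$ on the exponential scale, whence $\limsup U_n^{1/n}\le \limsup \tilde U_n^{1/n}\le e^{2u(\beta)}$. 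Taking $\beta\nearrow\alpha$ finishes the proof.

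\textbf{Main obstacle.} The delicate point is the boundary analysis: for $k$ close to $n/2$ the recursion \eqref{eq:u'} inevitably reaches indices in the modified initial segment, so the clean linear-map identification between $u_{n,k}$ and $\tilde u_{n,k}$ breaks down. The key quantitative input is that, although there are many such boundary indices in absolute terms, each is reachable only by paths incurring at most $O(n_1)$ mismatched recursion steps, so the accumulated discrepancy is at worst a polynomial-in-$n$ factor that vanishes after taking $n$-th roots.
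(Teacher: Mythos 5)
Your approach (modify the \emph{head} $[0,n_1)$ of $(b_n)$ so that $(\tilde b_n)$ obeys the global hypothesis of Theorem~\ref{thm:thm2}, then compare $U_n$ with $\tilde U_n$ at the level of the coefficients $u_{n,k}$) is structurally different from the paper's, which modifies the \emph{tail}: it keeps $\tilde b_n=b_n$ for $n\le N$ and sets $\tilde b_n=b_N\exp[\beta(1/(N+1)+\dots+1/n)]$ for $n>N$, so that $b_{n-1}/b_n\le\tilde b_{n-1}/\tilde b_n$ for every $n\ge1$ and Proposition~\ref{thm:comparison} gives $U_n\le\tilde U_n$ outright, with the rest handled by two applications of the shift invariance in Proposition~\ref{thm:shiftU}, whose hypothesis $\lim\tilde b_n^{1/n}=1$ holds precisely because of the polynomial tail. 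Since your head modification does not preserve the ratio inequality on $[1,n_1]$, Proposition~\ref{thm:comparison} is unavailable and you must argue on $u_{n,k}$ directly, and this is where the proposal breaks.

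In \eqref{eq:u'} the step from $(m,k')$ to $(m+1,k')$ carries weight $b_{m-2k'}/b_m$ and the step to $(m+1,k'+1)$ carries weight $b_{m-1-2k'}/b_m$; the numerator index is (up to a unit) the diagonal $m-2k'$ of the source, which under forward propagation performs a $\pm 1$ walk (up on ``stay'', down on ``advance''). A path from $(n_1,0)$ (diagonal $n_1$) to $(n,k)$ with $n-2k\ge n_1$ can still visit diagonals far below $n_1$: the very first advance step already uses $b_{n_1-1}/b_{n_1}$, and a path taking all $k$ advance steps first reaches diagonal $n_1-k$. On such excursions the numerators lie in the modified range, so the path weights differ for $(b_n)$ and $(\tilde b_n)$, and $u_{n,k}$, $\tilde u_{n,k}$ are \emph{not} the same nonnegative linear combination of the data at $m=n_1$ even in the ``small-$k$'' regime; thus $u_{n,k}\le A\,\tilde u_{n,k}$ does not follow. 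The boundary estimate is wrong for the same reason: the number of below-$n_1$ steps on a path to $(n,k)$ with $k$ near $n/2$ scales with $n$, not with $n_1$, so the discrepancy factor is $R^{O(n)}$, which does not disappear after $n$-th roots. Finally, the natural repair of invoking Proposition~\ref{thm:shiftU} directly on $(b_n)$ (using $\tilde b_n^{(n_1)}=b_n^{(n_1)}$) is blocked because that lemma needs $\lim b_n^{1/n}=1$, which the hypotheses of Theorem~\ref{thm:thm2a} do not imply --- the ratio condition is only a lower bound on $b_n$ and the sequence may grow super-polynomially. The paper's tail modification sidesteps all of this: the inequality $U_n\le\tilde U_n$ is immediate, and the shift-lemma hypotheses only need to be checked for the polynomially growing auxiliary sequences $\tilde b$, $\widehat b$, not for $(b_n)$ itself.
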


\begin{proof}Let $0<\beta<\alpha$ be arbitrary. There exists $N\ge n_0$ such that
\begin{gather}\label{eq:hpb2}
nf(n)\ge \beta,\qquad n\ge N.
\end{gather}
Combining \eqref{eq:hpb1} and \eqref{eq:hpb2}, we see that $(b_n)$ is eventually strictly increasing to infinity. By replacing $N$ by a larger integer if necessary, we may assume that
\begin{gather*}
\max\{b_n \colon 0\le n\le N-1\}<b_{N},
\end{gather*}
see the proof of Theorem~\ref{thm:evtlogconvex}. Defining
\begin{gather*}
\tilde{b}_n:=\begin{cases} b_n,& 0\le n\le N, \\
\displaystyle b_N\exp\left[\beta\left(\frac{1}{N+1}+\frac{1}{N+2}+\dots +\frac{1}{n}\right)\right],& n>N,
\end{cases}
\end{gather*}
we have
\begin{gather*}
\frac{b_{n-1}}{b_n}\le \frac{\tilde{b}_{n-1}}{\tilde{b}_n},\qquad n\ge 1,
\end{gather*}
hence $U_n\le \tilde{U}_n$ by Proposition~\ref{thm:comparison}.

We see that $\tilde{b}_n\le \tilde{b}_n^{(N)}:=\tilde{b}_{n+N}$ and $\tilde{b}_n^{1/n}\to 1$, so by Proposition~\ref{thm:shiftU} we get
\begin{gather}\label{eq:hpc1}
\limsup_n U_n^{1/n}\le \limsup_n \tilde{U}_n^{1/n}=\limsup_n\big(\tilde{U}_n^{(N)}\big)^{1/n}.
\end{gather}
Define now
\begin{gather*}
\widehat{b}_0 = b_N\exp\left[-\beta\left(1+\frac12+\dots +\frac{1}{N}\right)\right],\\
\widehat{b}_n = \widehat{b}_0\exp\left[\beta\left(1+\frac12+\ldots +\frac{1}{n}\right)\right],\qquad n\ge 1.
\end{gather*}
Then $\big(\widehat{b}_n\big)$ is strictly increasing and satisfies $\widehat{b}_{n-1}/\widehat{b}_{n}=\exp(-\beta/n)$, $n\ge 1$. Furthermore, the conditions of Proposition~\ref{thm:shiftU} are satisfied so
\begin{gather}\label{eq:hpc2}
\limsup_n \widehat{U}_n^{1/n}=\limsup_n\big(\widehat{U}_n^{(N)}\big)^{1/n}.
\end{gather}
Notice that $\widehat{b}_n^{(N)}=\tilde{b}_n^{(N)}$, so by \eqref{eq:hpc1}, \eqref{eq:hpc2} and Theorem~\ref{thm:thm2}
\begin{gather*}
\limsup_n U_n^{1/n}\le \limsup_n \widehat{U}_n^{1/n}\le {\rm e}^{2u(\beta)}.
\end{gather*}
Since $\beta<\alpha$ is arbitrary, we finally get \eqref{eq:rootUn1} by continuity.
\end{proof}

\begin{rem}\label{rem1}
The assumptions of Theorem~\ref{thm:thm2a} are satisfied for the following sequences, where it is assumed that $n$ is so large that the expressions are defined and positive:
\begin{align*}&b_n={\rm e}^{n^\gamma}, && 0<\gamma <1, && f(n)=\gamma n^{\gamma-1},&&\alpha=\infty,\\
& b_n={\rm e}^{(\log n)^\gamma}, &&\gamma >1,&& f(n)={\gamma\over n} \log ^{\gamma-1}n,&&\alpha=\infty,\\
& b_n={\rm e}^{\log n (\log\log n)^\gamma},\ &&\gamma >1, &&f(n)={\log \log n+\gamma\over n}(\log\log n)^{\gamma-1},&&\alpha=\infty,\\
& b_n=n^\gamma ,&& \gamma>0,&& f(n)={\gamma\over n},&& \alpha=\gamma .
\end{align*}
\end{rem}

\begin{rem}\label{rem3}Assume $(b_n)$ is increasing. Then by (\ref{eq:u'}) we get
\begin{gather*}
u_{n+1,k}\le u_{n,k}+u_{n,k-1},\qquad k\ge 0.
\end{gather*}
This implies, by an induction argument, that
\begin{gather*}
u_{n,k}\le {n\choose k}.
\end{gather*}
Thus by (\ref{eq:u1}) we get

\begin{gather}\label{eq:4^n}
U_n \le \sum_{k=0}^{[n/2]}{n\choose k}^2\le {2n\choose n}\le 4^n,
\end{gather}
hence
\begin{gather*}
 \limsup_n U_n^{1/n}\le 4.
\end{gather*}
This estimate is weaker than the one obtained in Theorem~\ref{thm:thm2a} for $\alpha >0.30873$. In fact, $\exp(2u(\alpha))\le 4$ for $u(\alpha)\le \log 2$ and $u(0.30872)>\log 2$, $u(0.30783)< \log 2$.

However, for bounded sequences $(b_n)$ the estimate \eqref{eq:4^n} cannot be improved, see Appendix~\ref{appendixA}.
\end{rem}

\section[Estimates of $c_n$ via $V_n$]{Estimates of $\boldsymbol{c_n}$ via $\boldsymbol{V_n}$}\label{section4}

In all of this section we consider symmetric moment problems given by a positive sequence~$(b_n)$, and it is tacitly assumed that the moment problem is indeterminate in order to be able to define~$c_n$ from~\eqref{eq:c_k}, hence $\sum 1/b_n<\infty$ by a classical result due to Carleman, cf.~\cite[p.~24]{Ak}.

For $k,l\ge 0$ define
\begin{gather}\label{eq:vkl}
v_{k,l}:= (-1)^l b_0b_1\cdots b_{k-1} b_{k,k+2l},
\end{gather}
where $b_{k,k+2l}$ is the coefficient to $x^k$ in $P_{k+2l}$, so $(-1)^lb_{k,k+2l}>0$ and hence $v_{k,l}>0$.

Observe that $v_{k,0}=1$ and since $b_{k,j}=0$ if $j<k$ it makes sense to define $v_{k,l}=0$ if $l<0$.

From \eqref{eq:P2n(0)} we know
\begin{gather}\label{eq:v0l}
P_{2l}(0)=b_{0,2l}=(-1)^lv_{0,l}=(-1)^l\prod_{j=1}^l \frac{b_{2j-2}}{b_{2j-1}}.
\end{gather}

Notice that by \eqref{eq:c_k} and \eqref{eq:Vn}
\begin{gather}\label{eq:V_n1}
V_n=b_0b_1\cdots b_{n-1} c_n=\left(\sum_{l=0}^\infty v_{n,l}^2\right)^{1/2}.
\end{gather}

\begin{lem}\label{thm:vkl}
The coefficients $v_{k,l}$ satisfy
\begin{gather}\label{eq:vkl1} v_{k+1,l}={b_k\over b_{k+2l}}v_{k,l}+ {b_{k+2l-1}\over b_{k+2l}}v_{k+1,l-1},\qquad k,l\ge 0.
\end{gather}
\end{lem}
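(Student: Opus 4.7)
The plan is to derive the recurrence for $v_{k,l}$ directly from the three-term recurrence \eqref{eq:recsym} satisfied by $(P_n)$ by comparing coefficients.

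Writing out $xP_n = b_n P_{n+1} + b_{n-1} P_{n-1}$ as a polynomial identity and extracting the coefficient of $x^{k+1}$ on both sides, one gets
\begin{gather*}
b_{k,n} = b_n\, b_{k+1,n+1} + b_{n-1}\, b_{k+1,n-1},
\end{gather*}
valid for all $k, n \ge 0$ (with $b_{i,j}=0$ for $i>j$). This is nothing but the standard fact that the transition matrix $\mathcal B$ intertwines multiplication by $x$ on polynomials with the Jacobi matrix acting on coefficients.

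Next I would specialise $n = k + 2l$, which is the correct parity choice: since $P_n$ contains only monomials of the parity of $n$, the coefficients $b_{k,k+2l}$, $b_{k+1,k+2l+1}$ and $b_{k+1,k+2l-1}$ are the only non-vanishing ones in the relation, giving
\begin{gather*}
b_{k,k+2l} = b_{k+2l}\, b_{k+1,k+2l+1} + b_{k+2l-1}\, b_{k+1,k+2l-1}.
\end{gather*}

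Now translate to $v_{k,l}$ via \eqref{eq:vkl}: $b_{k,k+2l} = (-1)^l v_{k,l}/(b_0 \cdots b_{k-1})$, $b_{k+1,k+2l+1} = (-1)^l v_{k+1,l}/(b_0 \cdots b_k)$, and $b_{k+1,k+2l-1} = (-1)^{l-1} v_{k+1,l-1}/(b_0 \cdots b_k)$. Substituting and multiplying both sides by $(-1)^l b_0 b_1 \cdots b_k$ yields
\begin{gather*}
b_k v_{k,l} = b_{k+2l} v_{k+1,l} - b_{k+2l-1} v_{k+1,l-1},
\end{gather*}
which rearranges to \eqref{eq:vkl1}. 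The boundary case $l=0$ is consistent with the convention $v_{k+1,-1}=0$, since it reduces to $v_{k+1,0} = v_{k,0} = 1$.

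There is no substantive obstacle here: the argument is purely algebraic, the only small thing to check is the sign bookkeeping from the factor $(-1)^l$ in the definition of $v_{k,l}$, which is precisely what converts the minus sign in $b_k v_{k,l} = b_{k+2l} v_{k+1,l} - b_{k+2l-1} v_{k+1,l-1}$ into the plus sign in the stated recurrence.
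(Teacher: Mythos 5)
Your proof is correct and takes essentially the same approach as the paper's: equate the coefficient of $x^{k+1}$ in the three-term recurrence $xP_{k+2l}=b_{k+2l}P_{k+2l+1}+b_{k+2l-1}P_{k+2l-1}$, then translate to $v_{k,l}$ via the definition and track the sign $(-1)^l$. The sign bookkeeping and the $l=0$ boundary check are both right.
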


\begin{proof}We have
\begin{gather*}
x P_{k+2l}=b_{k+2l}P_{k+2l+1}+b_{k+2l-1}P_{k+2l-1}.
\end{gather*}
By equating the coefficients of $x^{k+1}$ we obtain
\begin{gather*}
b_{k,k+2l}=b_{k+2l}b_{k+1,k+2l+1}+ b_{k+2l-1}b_{k+1,k+2l-1}.
\end{gather*}
Multiplying the last identity by $(-1)^lb_0b_1\cdots b_{k-1}$ leads to
\begin{gather*}
 v_{k,l}={b_{k+2l}\over b_k}v_{k+1,l}- { b_{k+2l-1}\over b_k}v_{k+1,l-1}.
\end{gather*}
 This gives (\ref{eq:vkl1}).
\end{proof}

The following result follows easily from \eqref{eq:v0l} and \eqref{eq:vkl1}.

\begin{prop}\label{thm:compV} Assume $(b_n)$ and $\big(\tilde{b}_n\big)$ are positive sequences satisfying
\begin{gather*}
{b_{n-1}\over b_{n}} \le {\tilde{b}_{n-1}\over \tilde{b}_{n}} ,\qquad n\ge 1.
\end{gather*}
 Then $v_{k,l} \le \tilde{v}_{k,l} $ and $V_k \le \tilde{V}_k$.
\end{prop}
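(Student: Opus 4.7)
The plan is a nested induction on $k$ and $l$ using the recursion~\eqref{eq:vkl1}, exploiting positivity of the $v_{k,l}$ together with the fact that the coefficients appearing in~\eqref{eq:vkl1}, namely $b_k/b_{k+2l}$ and $b_{k+2l-1}/b_{k+2l}$, are monotone in the required direction.

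First I would upgrade the hypothesis to a telescoped form. For any integers $0\le m<n$,
$$\frac{b_m}{b_n}=\prod_{j=m+1}^n\frac{b_{j-1}}{b_j}\le\prod_{j=m+1}^n\frac{\tilde b_{j-1}}{\tilde b_j}=\frac{\tilde b_m}{\tilde b_n}.$$
In particular, for all $k\ge 0$ and $l\ge 1$,
$$\frac{b_k}{b_{k+2l}}\le\frac{\tilde b_k}{\tilde b_{k+2l}},\qquad \frac{b_{k+2l-1}}{b_{k+2l}}\le\frac{\tilde b_{k+2l-1}}{\tilde b_{k+2l}}.$$

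For the base case $k=0$, the explicit formula~\eqref{eq:v0l} gives $v_{0,l}=\prod_{j=1}^l b_{2j-2}/b_{2j-1}$. Each factor is a ratio of the above type (with $m=2j-2$, $n=2j-1$), hence $v_{0,l}\le\tilde v_{0,l}$ for every $l\ge 0$. For the inductive step, assume $v_{k,l}\le\tilde v_{k,l}$ for every $l\ge 0$. I would then prove $v_{k+1,l}\le\tilde v_{k+1,l}$ by a secondary induction on $l$, starting from the trivial identity $v_{k+1,0}=1=\tilde v_{k+1,0}$. For $l\ge 1$, applying~\eqref{eq:vkl1} and using that all four quantities $v_{k,l}$, $v_{k+1,l-1}$, $b_k/b_{k+2l}$, $b_{k+2l-1}/b_{k+2l}$ are non-negative and dominated by their tildes gives
$$v_{k+1,l}=\frac{b_k}{b_{k+2l}}v_{k,l}+\frac{b_{k+2l-1}}{b_{k+2l}}v_{k+1,l-1}\le\frac{\tilde b_k}{\tilde b_{k+2l}}\tilde v_{k,l}+\frac{\tilde b_{k+2l-1}}{\tilde b_{k+2l}}\tilde v_{k+1,l-1}=\tilde v_{k+1,l}.$$
The conclusion $V_k\le\tilde V_k$ then follows by summing $v_{k,l}^2\le\tilde v_{k,l}^2$ over $l$ and invoking~\eqref{eq:V_n1}.

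There is no real obstacle to this argument; it is a straightforward positivity/monotonicity observation made possible by the non-negative-coefficient recursion for $(v_{k,l})$. The only point requiring care is to order the double induction correctly, so that when $v_{k+1,l}$ is bounded via~\eqref{eq:vkl1}, both $v_{k,l}$ (from the outer hypothesis) and $v_{k+1,l-1}$ (from the inner hypothesis) are already controlled.
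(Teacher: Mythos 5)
Your proof is correct and follows essentially the approach the paper intends: the paper merely states that the result "follows easily from \eqref{eq:v0l} and \eqref{eq:vkl1}", and your nested induction (outer on $k$ anchored at the explicit formula \eqref{eq:v0l}, inner on $l$ via the nonnegative-coefficient recursion \eqref{eq:vkl1}, with the telescoped ratio inequality) is precisely the argument that fills in that claim.
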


For $n_0\in\mathbb N$ the shifted sequence $b_n^{(n_0)}:=b_{n+n_0}$ also leads to an indeterminate moment problem, cf.~\cite[p.~28]{Ak}, and we shall consider the associated quantities $v_{k,l}^{(n_0)}$, $V_k^{(n_0)}$.

\begin{prop}\label{thm:shiftV1} The expressions above satisfy
\begin{gather}\label{eq:shiftV1}
v_{k,l}^{(1)}\le v_{k+1,l},\qquad V_k^{(1)}\le V_{k+1},\qquad k,l\ge 0,\\
\label{eq:shiftV2}
v_{k+1,l}\le L v_{k,l}^{(1)},\qquad V_{k+1}\le L V_k^{(1)},\qquad k,l\ge 0,
\end{gather}
where $L:=\sum\limits_{n=0}^\infty P_n^2(0)$. For $n_0\in\mathbb N$ we similarly have
\begin{gather}\label{eq:shiftV1a}
v_{k,l}^{(n_0)}\le v_{k+n_0,l}\le L(n_0) v_{k,l}^{(n_0)},\qquad V_k^{(n_0)}\le V_{k+n_0}\le L(n_0) V_k^{(n_0)},
\end{gather}
for a suitable constant $L(n_0)>0$. Furthermore,
\begin{gather} \label{eq:shiftV3}
\limsup_{k\to\infty} V_k^{1/k}=\limsup_{k\to\infty} \big(V_k^{(n_0)}\big)^{1/k}.
\end{gather}
\end{prop}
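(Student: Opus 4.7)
The plan hinges on the observation that, after a shift in~$k$, the recurrence \eqref{eq:vkl1} for $v_{k,l}$ coincides with the corresponding recurrence for $v_{k,l}^{(1)}$. Setting $w_{k,l}:=v_{k+1,l}$ in \eqref{eq:vkl1} and using that $b_n^{(1)}=b_{n+1}$ yields
\begin{gather*}
w_{k+1,l}=\frac{b_{k+1}}{b_{k+2l+1}}w_{k,l}+\frac{b_{k+2l}}{b_{k+2l+1}}w_{k+1,l-1},
\end{gather*}
which is exactly the recurrence satisfied by $v_{k,l}^{(1)}$. Moreover $w_{k,0}=1=v_{k,0}^{(1)}$, and the recurrence coefficients are non-negative. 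Consequently, any sandwich $c_1\,v_{k,l}^{(1)}\le w_{k,l}\le c_2\,v_{k,l}^{(1)}$ (with constants independent of~$l$) that is established at $k=0$ automatically propagates to every $k\ge 0$ by a straightforward double induction, outer on $k$ and inner on $l$, with the trivial base $l=0$.

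The base case $k=0$ is the core computation. Iterating \eqref{eq:vkl1} with $k=0$ and using $v_{1,0}=1$ gives
\begin{gather*}
v_{1,l}=\sum_{m=0}^{l}\frac{b_0}{b_{2m}}\,v_{0,m}\prod_{j=m+1}^{l}\frac{b_{2j-1}}{b_{2j}}.
\end{gather*}
Using the explicit product formulas $v_{0,m}=\prod_{j=1}^{m}b_{2j-2}/b_{2j-1}$ and $v_{0,l}^{(1)}=\prod_{j=1}^{l}b_{2j-1}/b_{2j}$ (coming from \eqref{eq:v0l} applied to the original problem and to its first shift), a direct algebraic check shows that each summand equals $v_{0,l}^{(1)}\,v_{0,m}^2$; hence
\begin{gather*}
v_{1,l}=v_{0,l}^{(1)}\sum_{m=0}^{l}v_{0,m}^2=v_{0,l}^{(1)}\sum_{m=0}^{l}P_{2m}^2(0).
\end{gather*}
Since $P_{2m+1}(0)=0$ by symmetry, this is a partial sum of $L=\sum_n P_n^2(0)$, so $1\le\sum_{m=0}^{l}v_{0,m}^2\le L$, giving $v_{0,l}^{(1)}\le v_{1,l}\le L\,v_{0,l}^{(1)}$. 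Combined with the propagation principle above, this yields $v_{k,l}^{(1)}\le v_{k+1,l}\le L\,v_{k,l}^{(1)}$ for all $k,l\ge 0$; squaring, summing over~$l$, and invoking~\eqref{eq:V_n1} then produces the $V$-versions of \eqref{eq:shiftV1} and \eqref{eq:shiftV2}.

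To obtain \eqref{eq:shiftV1a} I iterate the $n_0=1$ case through the successive shifted problems $(b_n^{(j)})_{n\ge 0}$ for $j=0,1,\ldots,n_0-1$, each of which is itself an indeterminate symmetric moment problem. Applying the already-established inequality at level~$j$ gives $v_{k,l}^{(j+1)}\le v_{k+1,l}^{(j)}\le L^{(j)}\,v_{k,l}^{(j+1)}$, where $L^{(j)}:=\sum_n\bigl(P_n^{(j)}(0)\bigr)^2<\infty$, and chaining $n_0$ such estimates produces the asserted bounds with $L(n_0):=\prod_{j=0}^{n_0-1}L^{(j)}$. Finally, \eqref{eq:shiftV3} follows by taking $k$-th roots in $V_k^{(n_0)}\le V_{k+n_0}\le L(n_0)\,V_k^{(n_0)}$: the factor $L(n_0)^{1/k}\to 1$, and the general identity $\limsup_k a_{k+n_0}^{1/k}=\limsup_k a_k^{1/k}$ (valid for any positive sequence) transfers the equality from $V_{k+n_0}$ back to $V_k$. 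The main obstacle is the explicit evaluation of $v_{1,l}$ and the combinatorial verification that identifies its coefficient sum with $v_{0,l}^{(1)}\sum_m P_{2m}^2(0)$; once that identity is in hand, the propagation, iteration, and limsup steps are routine.
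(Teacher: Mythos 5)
Your proposal is correct and follows essentially the same route as the paper: both exploit that $v_{k+2,l}$ and $v_{k+1,l}^{(1)}$ obey the identical non-negative recurrence, reduce everything to the base case $k=0$, and there identify $v_{1,l}/v_{0,l}^{(1)}$ with the partial sum $\sum_{m=0}^l P_{2m}^2(0)$, which is pinched between $1$ and $L$. The only cosmetic difference is that you unroll the $k=0$ recurrence into an explicit sum and simplify each term directly, whereas the paper proves the lower bound separately and obtains the upper bound via the telescoping quantity $t_l=v_{1,l}/v_{0,l}^{(1)}$; the resulting identity is the same, and your propagation (double induction on $k$ then $l$) and the paper's (single induction on $k+l$) are interchangeable.
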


\begin{proof} From \eqref{eq:vkl1} we get
\begin{gather}\label{eq:shiftvkl1}
 v_{k+1,l}^{(1)}=\frac{b_{k+1}}{b_{k+2l+1}}v_{k,l}^{(1)}+ \frac{b_{k+2l}}{b_{k+2l+1}}v_{k+1,l-1}^{(1)},\qquad k,l\ge 0,\\
 \label{eq:vkl1+}
v_{k+2,l}=\frac{b_{k+1}}{b_{k+2l+1}}v_{k+1,l}+ \frac{b_{k+2l}}{b_{k+2l+1}}v_{k+2,l-1},\qquad k,l\ge 0.
\end{gather}

On the other hand
\begin{gather}\label{eq:v1l}
v_{1,l}=\frac{b_0}{b_{2l}} v_{0,l}+\frac{b_{2l-1}}{ b_{2l}}v_{1,l-1}\ge \frac{b_{2l-1}}{ b_{2l}}v_{1,l-1}.
\end{gather}
Iterating this we get using $v_{1,0}=1$
\begin{gather}\label{eq:v0l(1)}
v_{1,l}\ge \prod_{j=1}^l \frac{b_{2j-1}}{b_{2j}}=v_{0,l}^{(1)},
\end{gather}
where the last equality follows from \eqref{eq:v0l}. This proves the left inequality in~\eqref{eq:shiftV1} for $k=0$ and all $l$, and it is clear for $l=0$ and all~$k$. In particular it is true for $k+l\le 1$. Assume that it holds for $k+l\le n$ for some $n\ge 1$. To prove it for $k+l=n+1$, it suffices to prove $v_{k+1,l}^{(1)}\le v_{k+2,l}$ when $l\ge 1$, but this follows immediately by~\eqref{eq:shiftvkl1} and~\eqref{eq:vkl1+} and the induction hypothesis. The right inequality in~\eqref{eq:shiftV1} is an immediate consequence of the left inequality.

Defining $t_l=v_{1,l}/v_{0,l}^{(1)}$, $l\ge 0$ we get by \eqref{eq:v0l(1)}
\begin{gather*}
t_l=v_{1,l}\prod_{j=1}^l \frac{b_{2j}}{b_{2j-1}},\qquad l\ge 1,
\end{gather*}
so by \eqref{eq:v1l} and \eqref{eq:v0l}
\begin{gather*}
t_l=\left(\frac{b_0}{b_{2l}}v_{0,l}+\frac{b_{2l-1}}{b_{2l}}v_{1,l-1}\right)\prod_{j=1}^l \frac{b_{2j}}{b_{2j-1}}=\frac{b_0}{b_{2l}}\prod_{j=1}^l\frac{b_{2j-2}b_{2j}}{b_{2j-1}^2} + t_{l-1}.
\end{gather*}
We then get for $l\ge 1$
\begin{gather*}
t_l=t_0+\sum_{m=1}^l \frac{b_0}{b_{2m}}\prod_{j=1}^m \frac{b_{2j-2}b_{2j}}{b_{2j-1}^2} =1+\sum_{m=1}^l P_{2m}^2(0)\le \sum_{n=0}^\infty P_n^2(0)=L,
\end{gather*}
hence $v_{1,l}\le L v_{0,l}^{(1)}$ for all $l$. By induction as above we then get the left side of~\eqref{eq:shiftV2}, and the right side follows.

The inequalities \eqref{eq:shiftV1a} easily follow by iteration with a suitable constant $L(n_0)>0$ independent of~$k$, and \eqref{eq:shiftV3} follows.
\end{proof}

\begin{thm}\label{thm:V_nlogcv} Assume $(b_n)$ is $q$-increasing, cf.\ Definition~{\rm \ref{thm:defqinc}}. The quantity $V_n$ from \eqref{eq:Vn} satisfies
\begin{gather}\label{eq:V_nlogcv}
V_n\le\frac{1}{\big(q^2;q^2\big)_\infty\sqrt{1-q^2}}.
\end{gather}
This holds in particular if $(b_n)$ is log-convex and strictly increasing with $q=b_0/b_1<1$.
\end{thm}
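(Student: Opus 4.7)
The plan is to mimic the structure of the proof of Theorem~\ref{thm:logconvex}, but now applied to the triangular array $(v_{k,l})$ controlled by the recurrence of Lemma~\ref{thm:vkl}. Since $(b_n)$ is $q$-increasing with $n_0 = 1$, we have $b_{n-1}/b_n \le q$ for all $n \ge 1$, which telescopes to
\[
\frac{b_k}{b_{k+2l}} \le q^{2l}, \qquad \frac{b_{k+2l-1}}{b_{k+2l}} \le q.
\]
Substituting into the recurrence \eqref{eq:vkl1} yields the clean inequality
\[
v_{k+1,l} \le q^{2l} v_{k,l} + q \, v_{k+1,l-1}.
\]

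The key step is to prove by induction on $k+l$ the bound
\[
v_{k,l} \le \frac{q^l}{(q^2;q^2)_l}, \qquad k, l \ge 0.
\]
The base case $l=0$ is immediate since $v_{k,0} = 1$, and for $k=0$ the formula \eqref{eq:v0l} together with $b_{2j-2}/b_{2j-1} \le q$ gives $v_{0,l} \le q^l \le q^l/(q^2;q^2)_l$. For the inductive step, assuming the estimate holds at $(k,l)$ and $(k+1, l-1)$, the displayed recurrence inequality gives
\[
v_{k+1,l} \le q^{2l} \cdot \frac{q^l}{(q^2;q^2)_l} + q \cdot \frac{q^{l-1}}{(q^2;q^2)_{l-1}} = \frac{q^l}{(q^2;q^2)_l}\bigl[q^{2l} + (1-q^{2l})\bigr] = \frac{q^l}{(q^2;q^2)_l},
\]
closing the induction. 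This is the analogue of the telescoping identity that made Theorem~\ref{thm:logconvex} work; I do not anticipate any real obstacle here beyond bookkeeping.

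With the pointwise bound in hand, I would conclude from \eqref{eq:V_n1} and the trivial estimate $(q^2;q^2)_l \ge (q^2;q^2)_\infty$ that
\[
V_n^2 = \sum_{l=0}^\infty v_{n,l}^2 \le \frac{1}{(q^2;q^2)_\infty^2} \sum_{l=0}^\infty q^{2l} = \frac{1}{(q^2;q^2)_\infty^2 \, (1-q^2)},
\]
which is precisely \eqref{eq:V_nlogcv}. For the last sentence, if $(b_n)$ is log-convex and strictly increasing, then the ratios $b_{n-1}/b_n$ form an increasing sequence bounded above by $b_0/b_1 = q < 1$, so $(b_n)$ is $q$-increasing with $n_0 = 1$ and the main bound applies directly.
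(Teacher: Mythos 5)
Your argument is essentially identical to the paper's: the same recurrence inequality $v_{k+1,l}\le q^{2l}v_{k,l}+q\,v_{k+1,l-1}$, the same inductive claim $v_{k,l}\le q^l/(q^2;q^2)_l$ proved by induction on $k+l$ with the same base cases, and the same summation step via \eqref{eq:V_n1}. One small slip in your final remark: under log-convexity, $b_n^2\le b_{n-1}b_{n+1}$ gives $b_n/b_{n+1}\le b_{n-1}/b_n$, so the ratios $b_{n-1}/b_n$ form a \emph{non-increasing} sequence (not an increasing one as you wrote); the stated conclusion that they are all bounded by $b_0/b_1=q$ remains correct, so the main proof is unaffected.
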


\begin{proof} Note that the moment problem is indeterminate by Proposition~\ref{thm:qinc-indet}. From~\eqref{eq:vkl1} and \eqref{eq:logcv-q} we obtain
\begin{gather}\label{eq:vkl1q}
v_{k+1,l}\le q^{2l} v_{k,l}+qv_{k+1,l-1}.
\end{gather}

We claim that
\begin{gather}\label{eq:vklq}
v_{k,l}\le q^l\prod_{j=1}^l \big(1-q^{2j}\big)^{-1}=\frac{q^l}{\big(q^2;q^2\big)_l},\qquad k,l\ge 0,
\end{gather}
which is certainly true for $l=0$ since $v_{k,0}=1$ and for $k=0$ since by \eqref{eq:v0l}
\begin{gather*}
v_{0,l}\le q^l\le \frac{q^l}{\big(q^2;q^2\big)_l}.
\end{gather*}

 We shall proceed by induction in $k+l$, and as already observed \eqref{eq:vklq} holds when $k+l\le 1$.

Suppose \eqref{eq:vklq} holds for $k+l\le n$ for some $n\ge 1$ and we shall prove it for $k+l=n+1$. From \eqref{eq:vkl1q} and the induction hypothesis we get for $l\ge 1$
\begin{gather*}
v_{k+1,l} \le q^{2l}v_{k,l}+q\, v_{k+1,l-1} \le \frac{q^{3l}}{\big(q^2;q^2\big)_{l}}+\frac{q^{l}}{\big(q^2;q^2\big)_{l-1}}=\frac{q^{l}}{\big(q^2;q^2\big)_{l}}.
\end{gather*}

From \eqref{eq:vklq} we get
\begin{gather*}
v_{k,l}\le \frac{q^l}{\big(q^2;q^2\big)_\infty}
\end{gather*}
and finally
\begin{gather*}
V_n^2=\sum_{l=0}^\infty v_{n,l}^2\le \big(q^2;q^2\big)_\infty^{-2}\sum_{l=0}^\infty q^{2l},
\end{gather*}
leading to \eqref{eq:V_nlogcv}.
\end{proof}

The boundedness of $(V_n)$ also holds in a slightly more general case.

\begin{thm}\label{thm:Vevtlogconv} Assume $(b_n)$ is eventually $q$-increasing, cf.\ Definition~{\rm \ref{thm:defqinc}}. The sequence $(V_n)$ from \eqref{eq:Vn} is bounded.
\end{thm}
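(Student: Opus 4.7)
The plan is to mimic the strategy used to prove the bounded analog Theorem~\ref{thm:evtlogconvex} for $U_n$, but using the shift tool Proposition~\ref{thm:shiftV1} instead of the pointwise comparison Proposition~\ref{thm:increasofbn}. The idea is that after shifting the index by $n_0$, the eventually $q$-increasing property upgrades to the full $q$-increasing property of Definition~\ref{thm:defqinc}, to which Theorem~\ref{thm:V_nlogcv} applies directly.

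First, I would write down the hypothesis: there exist $0<q<1$ and $n_0\in\mathbb N$ with $b_{n-1}/b_n\le q$ for all $n\ge n_0$. I would then consider the shifted sequence $b_n^{(n_0)}:=b_{n+n_0}$. For any $n\ge 1$ one has $b_{n-1}^{(n_0)}/b_n^{(n_0)}=b_{n+n_0-1}/b_{n+n_0}\le q$, since $n+n_0\ge n_0+1>n_0$. Thus $(b_n^{(n_0)})$ is $q$-increasing in the strict sense (with $n_0=1$ dropped). By Proposition~\ref{thm:qinc-indet}(i) both the original and the shifted moment problems are indeterminate, so the associated quantities $V_k$ and $V_k^{(n_0)}$ are well defined.

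Second, I apply Theorem~\ref{thm:V_nlogcv} to the shifted sequence to obtain the uniform bound
\begin{equation*}
V_k^{(n_0)}\le \frac{1}{\bigl(q^2;q^2\bigr)_\infty\sqrt{1-q^2}},\qquad k\ge 0.
\end{equation*}
Third, I invoke the right-hand inequality of \eqref{eq:shiftV1a} in Proposition~\ref{thm:shiftV1}, namely $V_{k+n_0}\le L(n_0)V_k^{(n_0)}$, to transfer this bound to the tail of the original sequence:
\begin{equation*}
V_{k+n_0}\le \frac{L(n_0)}{\bigl(q^2;q^2\bigr)_\infty\sqrt{1-q^2}},\qquad k\ge 0.
\end{equation*}
Combining this with the fact that the finitely many initial values $V_0,V_1,\ldots,V_{n_0-1}$ are finite (since the original problem is indeterminate and hence each $c_n$ is defined), the sequence $(V_n)$ is bounded.

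There is no real obstacle: the work was already done in Proposition~\ref{thm:shiftV1}, which packages the nontrivial consistency between $V_k$ and $V_k^{(n_0)}$ via the auxiliary constant $L(n_0)$. The only thing to verify carefully is that the shifted sequence satisfies the full $q$-increasing hypothesis of Theorem~\ref{thm:V_nlogcv}, which is immediate from the index translation.
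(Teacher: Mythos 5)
Your proof is correct and follows essentially the same route as the paper: shift by $n_0$ so that $\big(b_n^{(n_0)}\big)$ is $q$-increasing in the strict sense, apply Theorem~\ref{thm:V_nlogcv} to bound $\big(V_k^{(n_0)}\big)$, and then transfer the bound back to $(V_n)$ via the inequality $V_{k+n_0}\le L(n_0)V_k^{(n_0)}$ from \eqref{eq:shiftV1a}. The only difference is that you spell out the index-translation check and the treatment of the finitely many initial terms, which the paper leaves implicit.
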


\begin{proof} Note that the moment problem is indeterminate by Proposition~\ref{thm:qinc-indet}. We assume that \eqref{eq:qinc} holds. Then the sequence $\big(b_n^{(n_0)}\big)$ is $q$-increasing, so $\big(V_n^{(n_0)}\big)$ is bounded by Theorem~\ref{thm:V_nlogcv}. The boundedness of $(V_n)$ follows from \eqref{eq:shiftV1a}.
\end{proof}

Combining Theorem~\ref{thm:evtlogconvex} and Theorem~\ref{thm:Vevtlogconv} we get:

\begin{cor}\label{thm:VnsqrtUn} Assume $(b_n)$ is eventually $q$-increasing. Then $c_n\sqrt{s_{2n}}=V_n\sqrt{U_n}$ is a bounded sequence.
\end{cor}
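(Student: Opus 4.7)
The statement is essentially a direct corollary, so the plan is just to combine the two boundedness results cleanly and verify the algebraic identity $c_n\sqrt{s_{2n}} = V_n\sqrt{U_n}$.

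First, I would verify the identity. By the definitions in \eqref{eq:Un} and \eqref{eq:Vn}, we have $V_n = b_0 b_1 \cdots b_{n-1}\, c_n$ and $U_n = s_{2n}/(b_0 b_1 \cdots b_{n-1})^2$, so
\begin{gather*}
V_n\sqrt{U_n} = b_0 b_1 \cdots b_{n-1}\, c_n \cdot \frac{\sqrt{s_{2n}}}{b_0 b_1 \cdots b_{n-1}} = c_n \sqrt{s_{2n}},
\end{gather*}
which holds for $n \ge 1$, and is immediate for $n = 0$ since $V_0 = c_0$ and $U_0 = s_0 = 1$.

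Second, I would invoke the two preceding results. Since $(b_n)$ is eventually $q$-increasing, Theorem~\ref{thm:evtlogconvex} asserts that $(U_n)$ is bounded, and Theorem~\ref{thm:Vevtlogconv} asserts that $(V_n)$ is bounded (note that the latter implicitly uses that the moment problem is indeterminate, which is guaranteed by Proposition~\ref{thm:qinc-indet}(i) so that $c_n$ is well-defined). The product of two bounded sequences is bounded, hence $V_n\sqrt{U_n}$ is bounded, and by the identity above so is $c_n\sqrt{s_{2n}}$.

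There is no main obstacle; the corollary is a packaging statement whose work has already been done in Theorems~\ref{thm:evtlogconvex} and~\ref{thm:Vevtlogconv}. The only thing worth flagging is the consistency with Lemma~\ref{thm:Un}(ii), which gives the lower bound $V_n\sqrt{U_n} \ge 1$: the corollary thus says that under the $q$-increasing hypothesis the sequence $c_n\sqrt{s_{2n}}$ lies in a bounded interval $[1,M]$ for some $M$, a fact used implicitly in Remark~\ref{thm:c-sqrt} to explain why the naive bound $|s_{l+m}|\le\sqrt{s_{2l}}\sqrt{s_{2m}}$ cannot directly yield summability of $\sum c_l\sqrt{s_{2l}}$.
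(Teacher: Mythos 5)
Your proof is correct and matches the paper's approach exactly: the corollary is stated immediately after the sentence ``Combining Theorem~\ref{thm:evtlogconvex} and Theorem~\ref{thm:Vevtlogconv} we get,'' and your argument does precisely that combination, together with the (trivial but worth stating) verification of the identity $c_n\sqrt{s_{2n}}=V_n\sqrt{U_n}$ from the definitions \eqref{eq:Un} and \eqref{eq:Vn}.
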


We now turn to the log-concave case.

\begin{prop}\label{thm:aV_k} Assume $(b_n)$ is log-concave and strictly increasing. Then
\begin{gather*}
V_k\le V_{k+1}, \qquad c_k\le b_kc_{k+1}, \qquad k\ge 0.
\end{gather*}
\end{prop}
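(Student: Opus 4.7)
The plan is to establish the pointwise comparison
\begin{gather*}
v_{k,l}\le v_{k+1,l},\qquad k,l\ge 0,
\end{gather*}
by induction on $k+l$. Once this is done, $V_k\le V_{k+1}$ follows at once from~\eqref{eq:V_n1}, and dividing the definition of $V_k$ through by $b_0\cdots b_{k-1}$ gives $c_k\le b_kc_{k+1}$. So the only real content lies in the pointwise comparison.

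For the base cases, $v_{k,0}=v_{k+1,0}=1$ so the claim holds when $l=0$, and when $k=0$ one writes $v_{1,l}=\frac{b_0}{b_{2l}}v_{0,l}+\frac{b_{2l-1}}{b_{2l}}v_{1,l-1}$, uses the induction hypothesis $v_{1,l-1}\ge v_{0,l-1}=\tfrac{b_{2l-1}}{b_{2l-2}}v_{0,l}$ from~\eqref{eq:v0l}, and invokes log-concavity $b_{2l-1}^2\ge b_{2l-2}b_{2l}$ to obtain
\begin{gather*}
v_{1,l}\ge \left(\frac{b_0}{b_{2l}}+\frac{b_{2l-1}^2}{b_{2l-2}b_{2l}}\right)v_{0,l}\ge \left(1+\frac{b_0}{b_{2l}}\right)v_{0,l}\ge v_{0,l}.
\end{gather*}

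For the inductive step with $k,l\ge 1$, the idea is to use the recurrence \eqref{eq:vkl1} twice. First apply it to $v_{k+1,l}$ and use the hypothesis $v_{k+1,l-1}\ge v_{k,l-1}$ to get a lower bound for $v_{k+1,l}$ in terms of $v_{k,l}$ and $v_{k,l-1}$. Then rewrite the recurrence \eqref{eq:vkl1} at index $(k-1,l)\to(k,l)$ as
\begin{gather*}
\frac{b_{k+2l-2}}{b_{k+2l-1}}v_{k,l-1}=v_{k,l}-\frac{b_{k-1}}{b_{k+2l-1}}v_{k-1,l},
\end{gather*}
and substitute. After collecting terms, the desired inequality $v_{k+1,l}\ge v_{k,l}$ reduces to showing
\begin{gather*}
b_{k}b_{k+2l-2}\,v_{k,l}\ge b_{k-1}b_{k+2l-1}\,v_{k-1,l}.
\end{gather*}
This is where log-concavity enters decisively: the sequence $b_{n+1}/b_n$ is non-increasing, so telescoping gives $b_k/b_{k-1}\ge b_{k+2l-1}/b_{k+2l-2}$, i.e.\ $b_kb_{k+2l-2}\ge b_{k-1}b_{k+2l-1}$. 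Combined with $v_{k,l}\ge v_{k-1,l}$, which is covered by the inductive hypothesis at level $(k-1)+l=k+l-1$, the required estimate follows. The intermediate use of log-concavity in the bound $b_{k+2l-1}^2\ge b_{k+2l-2}b_{k+2l}$ is what allows the coefficient multiplying $v_{k,l}$ to reach the value $1$ after the substitution.

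The main obstacle is bookkeeping: one must check that each appeal to the induction hypothesis is made at a strictly smaller value of $k+l$, and that every log-concavity inequality is used in the correct direction (i.e., with the larger index on the squared term). The two occurrences of log-concavity needed, namely $b_{k+2l-1}^2\ge b_{k+2l-2}b_{k+2l}$ and the monotonicity of $b_{n+1}/b_n$, together with strict positivity $b_{k-1}>0$ to rule out the degenerate case, are precisely what the hypothesis provides. No further structural input is required.
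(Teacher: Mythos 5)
Your proof is correct, and the base cases (the trivial case $l=0$, and the case $k=0$ handled by a separate induction on $l$ that uses~\eqref{eq:v0l} and the log-concavity inequality $b_{2l-1}^2\ge b_{2l-2}b_{2l}$) coincide exactly with those in the paper. The inductive step for $k,l\ge 1$ is where you take a genuinely different route. The paper compares the recurrences for $v_{k+1,l}$ and $v_{k+2,l}$ \emph{termwise}: log-concavity gives $\frac{b_k}{b_{k+2l}}\le\frac{b_{k+1}}{b_{k+2l+1}}$ and $\frac{b_{k+2l-1}}{b_{k+2l}}\le\frac{b_{k+2l}}{b_{k+2l+1}}$, the induction hypothesis gives $v_{k,l}\le v_{k+1,l}$ and $v_{k+1,l-1}\le v_{k+2,l-1}$, and multiplying and adding these four facts yields $v_{k+1,l}\le v_{k+2,l}$ in one line, since the right-hand side is exactly the recurrence for $v_{k+2,l}$. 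You instead eliminate $v_{k,l-1}$ by substituting the recurrence one level down, use $b_{k+2l-1}^2\ge b_{k+2l-2}b_{k+2l}$ to push the coefficient of $v_{k,l}$ past $1$, and reduce the claim to $b_kb_{k+2l-2}v_{k,l}\ge b_{k-1}b_{k+2l-1}v_{k-1,l}$, finishing with the cross-ratio consequence of log-concavity together with the induction hypothesis $v_{k-1,l}\le v_{k,l}$. Both arguments invoke the induction hypothesis twice and log-concavity twice, both use only levels $<k+l$, and both are valid; the paper's termwise comparison is shorter and cleaner to verify, whereas your elimination step requires a bit more algebra but has the appeal of isolating a single scalar inequality as the crux of the induction.
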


 \begin{proof} We start by showing that
\begin{gather}\label{eq:v01}
v_{0,1} < v_{1,1}.
\end{gather}
Indeed, as
\begin{gather*}
P_2(x)={1\over b_0b_1}x^2-{b_0\over b_1},\qquad P_3(x)={1\over b_0b_1b_2}x^3-{b_0^2+b_1^2\over b_0b_1b_2}x,
\end{gather*}
 we have{\samepage
 \begin{gather*}
v_{0,1}= {b_0\over b_1},\qquad v_{1,1}={b_0^2+b_1^2\over b_1 b_2},
\end{gather*}
and using $b_0/b_1\le b_1/b_2$ we get \eqref{eq:v01}.}

 We are going to show that
 \begin{gather}\label{eq:avk}
v_{k,l}\le v_{k+1,l},
 \end{gather}
 by induction with respect to $k+l$. It is certainly true for $l=0$ and for $k=0$, $l=1$ by \eqref{eq:v01}, and it therefore holds for $k+l\le 1$.

We first show \eqref{eq:avk} for $k=0$, i.e.,
\begin{gather*}
v_{0,l}\le v_{1,l},\qquad l\ge 0,
\end{gather*}
and we use induction in $l$. Assuming it holds for $l-1\ge 0$ we get by \eqref{eq:vkl1}
\begin{gather*}
v_{1,l} = \frac{b_0}{b_{2l}}v_{0,l}+\frac{b_{2l-1}}{b_{2l}}v_{1,l-1} \ge \frac{b_0}{b_{2l}}v_{0,l}+\frac{b_{2l-1}}{b_{2l}}v_{0,l-1}
= v_{0,l}\left(\frac{b_0}{b_{2l}}+\frac{b_{2l-1}}{b_{2l}}\frac{b_{2l-1}}{b_{2l-2}} \right)> v_{0,l},
\end{gather*}
where we have used \eqref{eq:v0l} and $b_{2l-1}^2\ge b_{2l}b_{2l-2}$.

Assume now that \eqref{eq:avk} holds for $k+l\le n$ for some $n\ge 1$. By log-concavity
\begin{gather*}
\frac{b_n}{b_{n+l}}\le\frac{b_{n+1}}{b_{n+l+1}},\qquad n,l\ge 0,
\end{gather*}
and therefore for $l\ge 1$
\begin{gather*}
 v_{k+1,l} = \frac{b_{k}}{b_{k+2l}} v_{k,l}+ \frac{b_{k+2l-1}}{b_{k+2l}} v_{k+1,l-1} \le \frac{b_{k+1}}{b_{k+2l+1}}v_{k+1,l}+ \frac{b_{k+2l}}{b_{k+2l+1}}v_{k+2,l-1} =v_{k+2,l}.
\end{gather*}
 Finally
\begin{gather*}
V_k=\left(\sum_{l=0}^\infty v_{k,l}^2\right)^{1/2} \le \left(\sum_{l=0}^\infty v_{k+1,l}^2\right)^{1/2}=V_{k+1},
\end{gather*}
hence $c_k\le b_kc_{k+1}$.
 \end{proof}

\begin{thm}\label{thm:vkl-logcc} Assume $(b_n)$ is log-concave and strictly increasing. Set $b_{-1}=0$. The quanti\-ties~$v_{k,l}$ from \eqref{eq:vkl} satisfy
\begin{gather*}
v_{k,l}\le \prod_{j=1}^l {b_{k+2j-2}\over b_{k+2j-1}-b_{k-1}},\qquad k, l\ge 0.
\end{gather*}
 \end{thm}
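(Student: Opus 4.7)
Write $\pi_{k,l}:=\prod_{j=1}^l \tfrac{b_{k+2j-2}}{b_{k+2j-1}-b_{k-1}}$ for the candidate upper bound (empty product $=1$). I would prove $v_{k,l}\le\pi_{k,l}$ by induction on $k+l$. The base cases are automatic: for $l=0$ both sides are $1$; for $k=0$, the explicit formula~\eqref{eq:v0l} gives $v_{0,l}=\prod_{j=1}^l b_{2j-2}/b_{2j-1}$, which is exactly $\pi_{0,l}$ since $b_{-1}=0$. So the content is in the induction step for $k,l\ge 1$.

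The technical backbone is the following ``shift'' inequality derived from log-concavity: for $k\ge 1$ and $j\ge 1$,
\begin{gather*}
\frac{b_{k+2j-2}}{b_{k+2j-1}-b_{k-1}}\le \frac{b_{k+2j-1}}{b_{k+2j}-b_k}. \qquad (*)
\end{gather*}
Cross-multiplying and simplifying, $(*)$ becomes $b_{k+2j-1}^2-b_{k+2j-2}b_{k+2j}\ge b_{k+2j-1}b_{k-1}-b_{k+2j-2}b_k$. The left side is non-negative by log-concavity, while the right side can be rewritten as $b_{k-1}b_{k+2j-2}\bigl(b_{k+2j-1}/b_{k+2j-2}-b_k/b_{k-1}\bigr)$; since log-concavity is equivalent to the ratio $b_{m+1}/b_m$ being non-increasing, and $k+2j-2\ge k-1$, this factor is $\le 0$, so the right side is $\le 0$ and $(*)$ holds. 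Taking the product of these factor-wise inequalities over $j=1,\dots,l$ yields the monotonicity $\pi_{k-1,l}\le \pi_{k,l}$.

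For the induction step, fix $k,l\ge 1$ and apply the recurrence~\eqref{eq:vkl1} in the form (replacing $k$ by $k-1$)
\begin{gather*}
v_{k,l}=\frac{b_{k-1}}{b_{k+2l-1}}v_{k-1,l}+\frac{b_{k+2l-2}}{b_{k+2l-1}}v_{k,l-1}.
\end{gather*}
Both $v_{k-1,l}$ and $v_{k,l-1}$ have index sum $n$, so the induction hypothesis plus the monotonicity $\pi_{k-1,l}\le\pi_{k,l}$ gives
\begin{gather*}
v_{k,l}\le \frac{b_{k-1}}{b_{k+2l-1}}\pi_{k,l}+\frac{b_{k+2l-2}}{b_{k+2l-1}}\pi_{k,l-1}.
\end{gather*}
Now $\pi_{k,l}=\pi_{k,l-1}\cdot\frac{b_{k+2l-2}}{b_{k+2l-1}-b_{k-1}}$, so $\tfrac{b_{k+2l-2}}{b_{k+2l-1}}\pi_{k,l-1}=\pi_{k,l}\cdot\tfrac{b_{k+2l-1}-b_{k-1}}{b_{k+2l-1}}$. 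Substituting, the bracket telescopes to $1$ and we obtain $v_{k,l}\le\pi_{k,l}$, closing the induction.

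The only genuine obstacle is establishing $(*)$: one must extract both pieces of log-concavity simultaneously (as the inequality $b_{m+1}^2\ge b_{m-1}b_{m+1}$ for the ``Wronskian'' term, and as monotonicity of consecutive ratios for the boundary correction term involving $b_{k-1}$ and $b_k$). Once $(*)$ is in hand, the arithmetic of the induction step is essentially forced—the telescoping between $\pi_{k,l}$ and $\pi_{k,l-1}$ matches the recurrence coefficient exactly.
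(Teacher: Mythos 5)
Your argument is correct and reproduces the paper's proof: the same induction on $k+l$, the same use of~\eqref{eq:vkl1} with the index shifted so that both $v_{k-1,l}$ and $v_{k,l-1}$ fall under the inductive hypothesis, the same monotonicity $\pi_{k-1,l}\le\pi_{k,l}$ (the paper's~\eqref{eq:inc-k}), and the same telescoping identity (the paper's~\eqref{eq:vhat-l}). The one thing you add is an explicit proof of the factor-wise inequality~$(*)$, which the paper asserts simply ``by log-concavity''; a small point of care is that your $(*)$ as stated for $k\ge 1$ yields $\pi_{k,l}\le\pi_{k+1,l}$ only for $k\ge 1$, whereas the induction step at $k=1$ needs $\pi_{0,l}\le\pi_{1,l}$, i.e.\ the $k=0$ instance of~$(*)$ --- there the ratio rewriting $b_{k-1}b_{k+2j-2}\bigl(b_{k+2j-1}/b_{k+2j-2}-b_k/b_{k-1}\bigr)$ degenerates because $b_{-1}=0$, but the cross-multiplied right-hand side is then $b_{2j-1}b_{-1}-b_{2j-2}b_0=-b_{2j-2}b_0\le 0$ directly, so the inequality still holds and the gap is only cosmetic.
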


 \begin{proof} Define
 \begin{gather*}
w_{k,l}:= \prod_{j=1}^l {b_{k+2j-2}\over b_{k+2j-1}-b_{k-1}},\qquad k,l\ge 0,
\end{gather*}
where as usual empty products are equal to 1. We are going to show that
 \begin{gather}\label{eq:v-hatv}
v_{k,l}\le w_{k,l}.
\end{gather}
 Clearly $v_{k,0}= w_{k,0}=1$. Moreover, by \eqref{eq:v0l}
\begin{gather*}
v_{0,l}=w_{0,l}=\prod_{j=1}^l {b_{2j-2}\over b_{2j-1}},\qquad l\ge 1.
\end{gather*}
Therefore \eqref{eq:v-hatv} is valid for $l=0$ and all $k$ and for $k=0$ and all $l$. Observe that by log-concavity we have
 \begin{gather}\label{eq:inc-k}
 w_{k,l}\le w_{k+1,l}.
\end{gather}
We also have
\begin{gather}\label{eq:vhat-l}
{w_{k+1,l-1}\over w_{k+1,l}}= { b_{k+2l}-b_{k}\over b_{k+2l-1}}.
\end{gather}
Assume \eqref{eq:v-hatv} is valid for $k+l\le n, n\ge 1$ and let us prove it for $k+l=n+1$. We may assume $k,l\ge 1$ and write $k=k'+1$. By the induction hypothesis, in view of \eqref{eq:vkl1}, \eqref{eq:inc-k} and \eqref{eq:vhat-l}, we get
 \begin{gather*}
 v_{k,l} = v_{k'+1,l}\le {b_{k'}\over b_{k'+2l}}w_{k',l}+{b_{k'+2l-1}\over b_{k'+2l}}w_{k'+1,l-1}\\
\hphantom{v_{k,l}}{} \le {b_{k'}\over b_{k'+2l}}w_{k'+1,l}+{b_{k'+2l-1}\over b_{k'+2l}}w_{k'+1,l-1}=w_{k'+1,l}=w_{k,l}.\tag*{\qed}
 \end{gather*}\renewcommand{\qed}{}
 \end{proof}

The previous theorem can be extended to the eventually log-concave case.

\begin{thm}\label{thm:vkl-evtlogcc} Assume $(b_n)$ is log-concave and strictly increasing for $n\ge n_0$. Then there exists $L(n_0)\ge 1$ such that the quantities $v_{k,l}$ from \eqref{eq:vkl} satisfy
\begin{gather*}
v_{k,l}\le L(n_0)\prod_{j=1}^l\frac{b_{k+2j-2}}{b_{k+2j-1}-b_{k-1}},\qquad k\ge n_0, \qquad l\ge 0.
\end{gather*}
\end{thm}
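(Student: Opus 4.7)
The plan is to reduce the statement to the already-proved non-eventual case (Theorem~\ref{thm:vkl-logcc}) by applying it to the shifted recurrence sequence $b_n^{(n_0)}:=b_{n+n_0}$ and then transferring the resulting estimates back to $v_{k,l}$ via the comparison in Proposition~\ref{thm:shiftV1}. The hypothesis that $(b_n)$ is log-concave and strictly increasing for $n\ge n_0$ translates, for the shifted sequence, into the conditions $(b_n^{(n_0)})^2\le b_{n-1}^{(n_0)}b_{n+1}^{(n_0)}$ and $b_{n-1}^{(n_0)}<b_n^{(n_0)}$ for all $n\ge 1$, i.e., the shifted sequence is log-concave and strictly increasing in the sense of Definition~\ref{thm:deflogcvcc} with $n_0=1$. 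So Theorem~\ref{thm:vkl-logcc} applies directly to $(b_n^{(n_0)})$, with the convention $b_{-1}^{(n_0)}=0$.

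Applying that theorem yields, for all $k'\ge 0$ and $l\ge 0$,
\begin{gather*}
v_{k',l}^{(n_0)}\le \prod_{j=1}^l \frac{b_{k'+2j-2}^{(n_0)}}{b_{k'+2j-1}^{(n_0)}-b_{k'-1}^{(n_0)}},
\end{gather*}
which for $k'\ge 1$ becomes $\prod_{j=1}^l b_{k'+n_0+2j-2}/(b_{k'+n_0+2j-1}-b_{k'+n_0-1})$. Now I invoke the right-hand inequality in \eqref{eq:shiftV1a} of Proposition~\ref{thm:shiftV1}, which supplies a constant $L(n_0)\ge 1$ with $v_{k+n_0,l}\le L(n_0)\,v_{k,l}^{(n_0)}$ for all $k,l\ge 0$. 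Setting $k=k'+n_0$ with $k'\ge 1$ yields
\begin{gather*}
v_{k,l}\le L(n_0)\prod_{j=1}^l \frac{b_{k+2j-2}}{b_{k+2j-1}-b_{k-1}},\qquad k\ge n_0+1.
\end{gather*}

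The only remaining point is the boundary case $k=n_0$ (i.e., $k'=0$), where the conclusion of Theorem~\ref{thm:vkl-logcc} for the shifted sequence reads $v_{0,l}^{(n_0)}\le \prod_{j=1}^l b_{n_0+2j-2}/b_{n_0+2j-1}$ because of the convention $b_{-1}^{(n_0)}=0$. Since $b_{n_0+2j-1}-b_{n_0-1}\le b_{n_0+2j-1}$, this bound is dominated by the target product with $b_{k-1}=b_{n_0-1}$ subtracted in the denominator, so the inequality for $k=n_0$ follows from $v_{n_0,l}\le L(n_0)v_{0,l}^{(n_0)}$ as well. I do not expect any genuine obstacle here: the entire argument is a bookkeeping reduction, and the only mildly delicate point is correctly handling the convention $b_{-1}=0$ at the boundary index $k=n_0$, which I have taken care of above.
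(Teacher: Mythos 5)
Your proof is correct and follows exactly the same route as the paper's: apply Theorem~\ref{thm:vkl-logcc} to the shifted sequence $\big(b_n^{(n_0)}\big)$ and then transfer back via the right-hand inequality $v_{k+n_0,l}\le L(n_0)v_{k,l}^{(n_0)}$ from~\eqref{eq:shiftV1a}. One cosmetic slip: in recasting the hypothesis you wrote $\big(b_n^{(n_0)}\big)^2\le b_{n-1}^{(n_0)}b_{n+1}^{(n_0)}$, which is the log-convex direction; the intended log-concavity inequality should read $\big(b_n^{(n_0)}\big)^2\ge b_{n-1}^{(n_0)}b_{n+1}^{(n_0)}$, as your surrounding prose makes clear you meant. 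Your careful treatment of the boundary case $k=n_0$ (where the shifted-sequence convention $b_{-1}^{(n_0)}=0$ yields the slightly smaller denominator $b_{n_0+2j-1}$, which is then dominated by $b_{n_0+2j-1}-b_{n_0-1}$) is a point the paper's proof elides; your write-up is, if anything, slightly more precise there.
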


\begin{proof} The sequence $\big(b_n^{(n_0)}\big)$ is log-concave and strictly increasing, so by Theorem~\ref{thm:vkl-logcc}
\begin{gather*}
v_{k,l}^{(n_0)}\le \prod_{j=1}^l {b_{k+n_0+2j-2}\over b_{k+n_0+2j-1}-b_{k+n_0-1}},\qquad k, l\ge 0,
\end{gather*}
and since $v_{k+n_0,l}\le L(n_0)v_{k,l}^{(n_0)}$ by \eqref{eq:shiftV1a}, the assertion follows.
\end{proof}

\begin{thm}\label{thm:genlogcc} Assume $(b_n)$ satisfies
\begin{gather*}
{b_{n-1}\over b_n}\le {\rm e}^{-\beta/n},\qquad n\ge 1, \qquad \beta>1.
\end{gather*}
The quantities $V_n$ from \eqref{eq:Vn} satisfy
\begin{gather}\label{eq:Vn-logcc1}
\limsup_{n} V_n^{1/n}\le {\rm e}^{v(\beta)/2},
\end{gather}
where
\begin{gather}\label{eq:taubeta}
v(\beta):=\int_1^\infty\log\big(\big[1-x^{-\beta}\big]^{-1}\big){\rm d}x.
\end{gather}
The function $v(\beta)$ is positive and decreasing with $\lim\limits_{\beta\to1}v(\beta)=\infty$ and $\lim\limits_{\beta\to\infty}v(\beta)=0$.
\end{thm}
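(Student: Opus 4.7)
The strategy is to bound $V_n$ by comparison with the explicit log-concave sequence $\widehat{b}_n = (n+1)^\beta$. The hypothesis $b_{n-1}/b_n \le e^{-\beta/n}$ together with the elementary inequality $e^{1/n} \ge 1+1/n$ gives
$$\frac{b_{n-1}}{b_n} \le e^{-\beta/n} \le \left(\frac{n}{n+1}\right)^\beta = \frac{\widehat{b}_{n-1}}{\widehat{b}_n},\qquad n\ge 1,$$
so Proposition~\ref{thm:compV} yields $V_n \le \widehat{V}_n$. Since $\log(n+1)$ is concave in $n$, $(\widehat{b}_n)$ is log-concave and strictly increasing, so Theorem~\ref{thm:vkl-logcc} (with the convention $\widehat{b}_{-1}=0$) applies and gives
$$\widehat{v}_{n,l} \le \widehat{w}_{n,l} := \prod_{j=1}^l \frac{(n+2j-1)^\beta}{(n+2j)^\beta - n^\beta}.$$

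The next step is to bound each $\widehat{w}_{n,l}$ uniformly in $l$. Using $(n+2j-1)^\beta \le (n+2j)^\beta$ and setting $y_j = (n+2j)/n$ gives
$$\log \widehat{w}_{n,l} \le \sum_{j=1}^l -\log\bigl(1-y_j^{-\beta}\bigr),$$
a sum of positive terms that decrease in $j$. Since the function $f(t) = -\log\bigl(1-(1+2t/n)^{-\beta}\bigr)$ is positive and decreasing with $\int_0^\infty f(t)\,dt = \tfrac{n}{2}v(\beta)$ after the substitution $y = 1+2t/n$, the standard integral comparison $\sum_{j\ge 1} f(j) \le \int_0^\infty f(t)\,dt$ yields
$$\log \widehat{w}_{n,l} \le \sum_{j=1}^\infty -\log\bigl(1-y_j^{-\beta}\bigr) \le \frac{n}{2}\,v(\beta),$$
so $\widehat{w}_{n,l} \le e^{nv(\beta)/2}$ uniformly in $l$.

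What remains is to pass from this pointwise bound to a bound on $\widehat{V}_n^2 = \sum_l \widehat{v}_{n,l}^2$, and this is the main technical step. The uniform bound alone would give a divergent sum, but a closer look at the ratio
$$\frac{\widehat{w}_{n,l+1}}{\widehat{w}_{n,l}} = \frac{(n+2l+1)^\beta}{(n+2l+2)^\beta - n^\beta}$$
shows that for $l$ large the ratio is asymptotic to $1 - \beta/(2l) + (n/(2l))^\beta$, and because $\beta>1$ the negative term dominates past some threshold $l_0 = O(n^{\beta/(\beta-1)})$ (where the ratio first drops below $1$). Hence $\widehat{w}_{n,l}$ decays like $(l_0/l)^{\beta/2}$ for $l>l_0$, and
$$\sum_{l>l_0} \widehat{w}_{n,l}^2 \le \widehat{w}_{n,l_0}^2 \sum_{l>l_0}(l_0/l)^\beta = O(l_0)\,\widehat{w}_{n,l_0}^2,$$
by integral comparison (finite because $\beta>1$). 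The head $l\le l_0$ contributes at most $l_0\,e^{nv(\beta)}$, so $\widehat{V}_n^2 \le P(n)\,e^{nv(\beta)}$ for a polynomial $P$. Taking $n$-th roots, the polynomial factor disappears and \eqref{eq:Vn-logcc1} follows from $V_n\le \widehat{V}_n$.

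The stated properties of $v(\beta)$ are routine: positivity is immediate from $x>1$; monotonicity follows from $\partial_\beta[-\log(1-x^{-\beta})] = -x^{-\beta}\log x/(1-x^{-\beta}) \le 0$; the limit $v(\beta) \to 0$ as $\beta \to \infty$ follows by dominated convergence, uniformly dominated by $-\log(1-x^{-\beta_0})$ for any fixed $\beta_0>1$; and $v(\beta) \to \infty$ as $\beta \to 1^+$ from the tail $-\log(1-x^{-\beta}) \sim x^{-\beta}$ whose integral $\int_2^\infty x^{-\beta}\,dx = 2^{1-\beta}/(\beta-1)$ blows up. The main obstacle in the proof is the $l$-summation: the uniform bound on $\widehat{w}_{n,l}$ is essentially saturated at some large value of $l$, and its decay past this maximum is only algebraic, so one must split the sum into head and tail regions and exploit $\beta>1$ not only in the finiteness of $v(\beta)$ but also in the convergence of $\sum(l_0/l)^\beta$.
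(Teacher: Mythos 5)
Your overall strategy coincides with the paper's: dominate $(b_n)$ by an explicit log‑concave model, invoke Proposition~\ref{thm:compV} to reduce to the model, bound $\widehat v_{k,l}$ via Theorem~\ref{thm:vkl-logcc}, and then carefully sum the squares over~$l$ so that after taking $n$-th roots only the $e^{v(\beta)/2}$ factor survives. The differences are in execution. You compare with $\widehat b_n=(n+1)^\beta$, whereas the paper uses $\tilde b_n=b_0\exp(\beta H_n)$; since $\tilde b_{n-1}/\tilde b_n=e^{-\beta/n}$ exactly, the paper's dominated ratio already sits at the extremal value, but your choice is equally valid by $e^{-1/n}\le n/(n+1)$ and leads to the cleaner observation that $\log\widehat w_{n,l}\le\sum_{j\ge 1}f(j)\le\int_0^\infty f=\tfrac n2 v(\beta)$, a uniform bound in $l$ that the paper does not isolate. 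The paper instead sets $k=2u$, forms $\zeta_{u,l}$, splits the $l$-sum at $N=u^2$, and uses an explicit block decomposition to control the tail; you split at the point $l_0$ where $\widehat w_{n,l}$ stops increasing and argue algebraic decay past it. Both yield a polynomial-in-$n$ prefactor in front of $e^{nv(\beta)}$, which is all that is needed.

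One point in your argument needs tightening: the displayed inequality $\sum_{l>l_0}\widehat w_{n,l}^2\le\widehat w_{n,l_0}^2\sum_{l>l_0}(l_0/l)^\beta$ presumes $\widehat w_{n,l}\le\widehat w_{n,l_0}(l_0/l)^{\beta/2}$ for every $l>l_0$, which fails for $l$ just past $l_0$ because the local decay rate in $l$ ramps up from $0$ to $\beta/2$ only gradually (the $(n/(2l))^\beta$ term is still comparable to $\beta/(2l)$ there). To repair it, factor $\widehat w_{n,l}/\widehat w_{n,l_0}$ as $\prod\left(\tfrac{n+2j-1}{n+2j}\right)^\beta\cdot\prod\left(1-(n/(n+2j))^\beta\right)^{-1}$ over $l_0<j\le l$: the first product is $\le\big(\tfrac{n+2l_0+2}{n+2l+2}\big)^{\beta/2}$ by $\log(1-x)\le -x$ plus integral comparison, while the second is bounded by a constant depending only on $\beta$ because $\sum_{j>l_0}(n/(n+2j))^\beta=O(1)$ once $l_0\sim n^{\beta/(\beta-1)}$. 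This gives $\widehat w_{n,l}\le C_\beta\widehat w_{n,l_0}(l_0/l)^{\beta/2}$ with a constant independent of $n$, after which your tail estimate goes through unchanged. Your verification of the stated properties of $v$ is correct.
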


\begin{proof} Note that the moment problem is indeterminate by Proposition~\ref{thm:qinc-indet}.
Defining $\big(\tilde{b}_n\big)$ by
\begin{gather*}
\tilde{b}_0=b_0,\qquad \tilde{b}_n=b_0 {\rm e}^{\beta(1+1/2+\dots + 1/n)},\qquad n\ge 1,
\end{gather*}
then $\tilde{b}_{n-1}/\tilde{b}_n={\rm e}^{-\beta/n}$, so $\big(\tilde{b}_n\big)$ is strictly increasing, log-concave and
\begin{gather*}
\frac{b_{n-1}}{b_n}\le \frac{\tilde{b}_{n-1}}{\tilde{b}_n},
\end{gather*}
hence $V_n\le \tilde{V}_n$ by Proposition~\ref{thm:compV}.

By Theorem~\ref{thm:vkl-logcc}
\begin{gather*}
\tilde{v}_{k,l} \le \prod_{j=1}^l {\tilde{b}_{k+2j-2}\over \tilde{b}_{k+2j-1}-\tilde{b}_{k-1}}=
\prod_{j=1}^l \frac{\tilde{b}_{k+2j-2}/\tilde{b}_{k+2j-1}}{1-\tilde{b}_{k-1}/\tilde{b}_{k+2j-1}} \\
\hphantom{\tilde{v}_{k,l}}{} =
\prod_{j=1}^l {{\rm e}^{-{\beta\over k+2j-1}}\over 1-{\rm e}^{-\beta\sum\limits_{i=k}^{k+2j-1}{1\over i}}} =
{\rm e}^{-\beta \sum\limits_{j=1}^l {1\over k+2j-1}}
\prod_{j=1}^l \bigg( 1-{\rm e}^{-\beta\sum\limits_{i=k}^{k+2j-1}{1\over i}}\bigg )^{-1}.
\end{gather*}
We need the estimates
\begin{gather}\label{eq:hpd1}
\sum_{i=k}^{k+2j-1}{1\over i}\ge \int_k^{k+2j}{1\over x}{\rm d}x=\log(k+2j)-\log k
\end{gather}
and
\begin{gather*}
 \sum_{j=1}^l {1\over k+2j-1} = {1\over 2} \sum_{j=1}^l {1\over j+{k-1\over 2}}\ge
 {1\over 2} \int_{k+1\over 2}^{l+{k+1\over 2}}{1\over x}{\rm d}x = {1\over 2}[\log(k+2l+1)-\log(k+1)].
\end{gather*}
Thus, for $k,l\ge 1$,
\begin{gather*}
\tilde{v}_{k,l} \le \left ( {k+1\over k+2l+1}\right )^{\beta/2} \prod_{j=1}^l \left [1-\left ({k\over k+2j}\right )^\beta\right ]^{-1}\\
\hphantom{\tilde{v}_{k,l}}{} \le 2^{\beta/2}\left ( {k\over k+2l}\right )^{\beta/2} \prod_{j=1}^l \left [1-\left ({k\over k+2j}\right )^\beta\right ]^{-1}.
\end{gather*}

For $k=2u, u\in\mathbb N$ we then get
\begin{gather}\label{eq:sigma-kl}
\tilde{v}_{2u,l}\le \zeta_{u,l}:=2^{\beta/2} \left ( {u\over u+l}\right )^{\beta/2} \prod_{j=1}^l \left [1-\left ({u\over u+j}\right )^\beta\right ]^{-1}
\end{gather}
and for $k=2u+1$, $u\in\mathbb N$
\begin{gather}\label{eq:odd}
\tilde{v}_{2u+1,l}=\left(\frac{u+1}{u+l+1}\right)^{\beta/2}\prod_{j=1}^l \left [1-\left(\frac{2u+1}{2u+2j+1}\right)^{\beta}\right]^{-1}<\zeta_{u+1,l}.
\end{gather}

Next, for any $N\in\mathbb N$
\begin{gather*}
\sum_{l=1}^\infty \zeta_{u,l}^2 \le
2^\beta \sum_{l=1}^\infty \left ( {u\over u+l}\right )^\beta\prod_{j=1}^l \left [1-\left ({u\over u+j}\right )^\beta\right ]^{-2} \\
\hphantom{\sum_{l=1}^\infty \zeta_{u,l}^2}{} =
2^\beta \sum_{l=1}^N \left ( {u\over u+l}\right )^\beta\prod_{j=1}^l \left [1-\left ({u\over u+j}\right )^\beta\right ]^{-2} \\
\hphantom{\sum_{l=1}^\infty \zeta_{u,l}^2\le }{} + 2^\beta \prod_{j=1}^N \left [1-\left ({u\over u+j}\right )^\beta\right ]^{-2}
\sum_{l=N+1}^\infty \left ( {u\over u+l}\right )^\beta\prod_{j=N+1}^l \left [1-\left ({u\over u+j}\right )^\beta\right ]^{-2}\\
\hphantom{\sum_{l=1}^\infty \zeta_{u,l}^2}{}\le 2^\beta (N+S_N) \prod_{j=1}^N \left [1-\left ({u\over u+j}\right )^\beta\right ]^{-2},
\end{gather*}
where
\begin{gather*}
S_N:=\sum_{l=N+1}^\infty \left ( {u\over u+l}\right )^\beta\prod_{j=N+1}^l \left [1-\left ({u\over u+j}\right )^\beta\right ]^{-2}.
\end{gather*}

Let now $N=u n_u$, where $n_u\in\mathbb N$ will be chosen later and write
\begin{gather*}
S_N = \sum_{n=n_u+1}^\infty \sum_{l=(n-1)u+1}^{nu}\left ( {u\over u+l}\right )^\beta\prod_{j=N+1}^l \left [1-\left ({u\over u+j}\right )^\beta\right ]^{-2}\\
\hphantom{S_N}{} \le u\sum_{n=n_u+1}^\infty \frac{1}{n^\beta}\prod_{j=N+1}^{nu} \left [1-\left ({u\over u+j}\right )^\beta\right ]^{-2}\\
\hphantom{S_N}{}= u\sum_{n=n_u+1}^\infty \frac{1}{n^\beta}\prod_{m=n_u+1}^n\; \prod_{j=(m-1)u+1}^{mu} \left [1-\left ({u\over u+j}\right )^\beta\right ]^{-2}\\
\hphantom{S_N}{} \le u\sum_{n=n_u+1}^\infty \frac{1}{n^\beta}\prod_{m=n_u+1}^n \left[1-\frac{1}{m^\beta}\right]^{-2u}
<u\sum_{n=n_u+1}^\infty \frac{1}{n^\beta}\prod_{m=n_u+1}^\infty \left[1-\frac{1}{m^\beta}\right]^{-2u}.
\end{gather*}
We use the estimate
\begin{gather*}
\sum_{n=n_u+1}^\infty\frac{1}{n^\beta}<\int_{n_u}^\infty\frac{{\rm d}x}{x^\beta}=\frac{1}{(\beta-1)n_u^{\beta-1}}\le \frac{1}{\beta-1}
\end{gather*}
to get
\begin{gather*}
S_N<\frac{u}{\beta-1}\prod_{m=n_u+1}^\infty \left[1-\frac{1}{m^\beta}\right]^{-2u},
\end{gather*}
hence
\begin{gather*}
\sum_{l=1}^\infty \zeta_{u,l}^2 \le 2^\beta u\prod_{j=1}^{un_u}\left[1-\left(\frac{u}{u+j}\right)^\beta\right]^{-2}\left(n_u+\frac{1}{\beta-1}\prod_{m=n_u+1}^\infty\left[1-\frac{1}{m^\beta}\right]^{-2u}\right).
\end{gather*}
For $n_u=u$ we get
\begin{gather*}
\sum_{l=1}^\infty \zeta_{u,l}^2 \le \frac{2^\beta \beta}{\beta-1} u^2\prod_{j=1}^{u^2}\left[1-\left(\frac{u}{u+j}\right)^\beta\right]^{-2}\prod_{m=u+1}^\infty\left[1-\frac{1}{m^\beta}\right]^{-2u},
\end{gather*}
hence
\begin{gather*}
\limsup_{u\to\infty}\left(\sum_{l=1}^\infty \zeta_{u,l}^2\right)^{1/u}\le \limsup_{u\to\infty}
\prod_{j=1}^{u^2}\left[1-\left(\frac{u}{u+j}\right)^\beta\right]^{-2/u}
\end{gather*}
because
\begin{gather*}
\lim_{u\to\infty}\prod_{m=u+1}^\infty\left[1-\frac{1}{m^\beta}\right]^{-2}=1.
\end{gather*}

Furthermore, for $u\to\infty$ we have
\begin{gather*}
-\frac{2}{u}\sum_{j=1}^{u^2} \log\big[1-(1+j/u)^{-\beta}\big]\to -2 \int_1^\infty \log\big[1-x^{-\beta}\big]{\rm d}x,
\end{gather*}
hence
\begin{gather}\label{eq:hpc3}
\limsup_{u\to\infty}\left(\sum_{l=1}^\infty \zeta_{u,l}^2\right)^{1/u} \le {\rm e}^{2v(\beta)}.
\end{gather}
From \eqref{eq:V_n1}, \eqref{eq:sigma-kl} and \eqref{eq:odd} we get
\begin{gather*}
V_{2u}^{1/(2u)} \le \tilde{V}_{2u}^{1/(2u)}\le \left(1+\sum_{l=1}^\infty \zeta_{u,l}^2\right)^{1/(4u)},\\
V_{2u+1}^{1/(2u+1)} \le \tilde{V}_{2u+1}^{1/(2u+1)}\le \left(1+\sum_{l=1}^\infty \zeta_{u+1,l}^2\right)^{1/(4u+2)}.
\end{gather*}
We next use that
\begin{gather*}
\zeta_{u,1}=\left(\frac{2u}{u+1}\right)^{\beta/2}\left[1-\left(\frac{u}{u+1}\right)^\beta\right]^{-1}\to \infty \qquad \text{for}\quad u\to\infty,
\end{gather*}
so \eqref{eq:hpc3} implies that \eqref{eq:Vn-logcc1} holds. The last assertion is easy to prove.
\end{proof}

\begin{thm}\label{thm:thm2b} Assume $(b_n)$ satisfies
\begin{gather*}
\frac{b_{n-1}}{b_n}\le {\rm e}^{-f(n)},\qquad n\ge n_0\ge 1,
\end{gather*}
where $f(n)>0$ for $n\ge n_0$ and
\begin{gather*}
\liminf_{n} nf(n)=\alpha>1.
\end{gather*}
The symmetric moment problem given by $(b_n)$ is indeterminate of order $\le 1/\alpha$ and the quanti\-ty~$V_n$ from \eqref{eq:Vn} satisfies
\begin{gather}\label{eq:rootVn1}
\limsup_n V_n^{1/n}\le {\rm e}^{v(\alpha)/2},
\end{gather}
where $v(\alpha)$ is defined in \eqref{eq:taubeta} and $v(\infty):=0$.
\end{thm}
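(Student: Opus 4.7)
My plan is to mirror the proof of Theorem~\ref{thm:thm2a}, replacing the role of Theorem~\ref{thm:thm2} (the $U_n$ estimate under $b_{n-1}/b_n \le e^{-\beta/n}$) with Theorem~\ref{thm:genlogcc} (the analogous $V_n$ estimate). The indeterminacy and order statement will be immediate from Proposition~\ref{thm:qinc-indet}(ii), since for any fixed $\beta$ with $1 < \beta < \alpha$, the hypothesis $\liminf nf(n) = \alpha$ gives some $N \ge n_0$ with $f(n) \ge \beta/n$ for $n \ge N$, so $b_{n-1}/b_n \le e^{-\beta/n}$ eventually, forcing indeterminacy of order $\le 1/\beta$; letting $\beta \nearrow \alpha$ yields order $\le 1/\alpha$. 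In particular, $V_n$ is well-defined.

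For the main estimate \eqref{eq:rootVn1}, fix $1 < \beta < \alpha$ and choose $N \ge n_0$ as above. I would then build the same two auxiliary sequences used in Theorem~\ref{thm:thm2a}: first
\begin{gather*}
\tilde{b}_n := \begin{cases} b_n, & 0 \le n \le N, \\ b_N \exp\!\bigl[\beta\bigl(\tfrac{1}{N+1} + \cdots + \tfrac{1}{n}\bigr)\bigr], & n > N, \end{cases}
\end{gather*}
which satisfies $b_{n-1}/b_n \le \tilde{b}_{n-1}/\tilde{b}_n$ for all $n \ge 1$, so Proposition~\ref{thm:compV} gives $V_n \le \tilde{V}_n$. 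Second, I would define $\widehat{b}_n$ so that $\widehat{b}_{n-1}/\widehat{b}_n = e^{-\beta/n}$ for \emph{all} $n \ge 1$, with the same values as $\tilde{b}_n$ after the shift by $N$, that is
\begin{gather*}
\widehat{b}_0 = b_N \exp\!\bigl[-\beta\bigl(1+\tfrac12+\cdots+\tfrac1N\bigr)\bigr], \qquad \widehat{b}_n = \widehat{b}_0 \exp\!\bigl[\beta\bigl(1+\tfrac12+\cdots+\tfrac1n\bigr)\bigr],
\end{gather*}
so that $\widehat{b}_n^{(N)} = \tilde{b}_n^{(N)}$. Both $(\tilde{b}_n)$ and $(\widehat{b}_n)$ satisfy $\tilde{b}_n \le \tilde{b}_n^{(N)}$ and are eventually strictly increasing.

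The link between the sequences and their shifts is provided by Proposition~\ref{thm:shiftV1}, specifically by \eqref{eq:shiftV1a} and \eqref{eq:shiftV3}, which give
\begin{gather*}
\limsup_n \tilde{V}_n^{1/n} = \limsup_n \bigl(\tilde{V}_n^{(N)}\bigr)^{1/n} = \limsup_n \bigl(\widehat{V}_n^{(N)}\bigr)^{1/n} = \limsup_n \widehat{V}_n^{1/n}.
\end{gather*}
Applying Theorem~\ref{thm:genlogcc} to $(\widehat{b}_n)$, which satisfies $\widehat{b}_{n-1}/\widehat{b}_n = e^{-\beta/n}$ for all $n \ge 1$ with $\beta > 1$, yields $\limsup_n \widehat{V}_n^{1/n} \le e^{v(\beta)/2}$. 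Combining,
\begin{gather*}
\limsup_n V_n^{1/n} \le \limsup_n \tilde{V}_n^{1/n} \le e^{v(\beta)/2}.
\end{gather*}
Since $\beta \in (1,\alpha)$ was arbitrary and $v$ is continuous and decreasing, letting $\beta \nearrow \alpha$ gives \eqref{eq:rootVn1}, with the convention $v(\infty) = 0$ covering $\alpha = \infty$.

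The only delicate point is making sure the shift machinery of Proposition~\ref{thm:shiftV1} applies in the present indeterminate setting; this is guaranteed because the indeterminacy of the shifted problem is classical (as noted before Proposition~\ref{thm:shiftV1}) and is already inherited from the bound $\widehat{b}_{n-1}/\widehat{b}_n = e^{-\beta/n}$ with $\beta > 1$ via Proposition~\ref{thm:qinc-indet}(ii). No new analytic difficulty arises beyond that already overcome in the $U_n$ version, and the passage $\beta \to \alpha$ causes no trouble since $v$ extends continuously to $\infty$ with value $0$.
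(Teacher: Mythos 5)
Your proof is correct and follows essentially the same route as the paper: the same auxiliary sequences $\tilde b_n$ and $\widehat b_n$, the comparison via Proposition~\ref{thm:compV}, the shift identity \eqref{eq:shiftV3}, an application of Theorem~\ref{thm:genlogcc}, and the limiting argument $\beta\nearrow\alpha$ using continuity of $v$. The remark that $\tilde b_n\le\tilde b_n^{(N)}$ is harmless but unnecessary, since unlike Proposition~\ref{thm:shiftU} in the $U_n$ case, Proposition~\ref{thm:shiftV1} does not require it.
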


\begin{proof}Let $1<\beta<\alpha$ be arbitrary. There exists $N\ge n_0$ such that
\begin{gather*}
nf(n)\ge \beta,\qquad n\ge N,
\end{gather*}
and it follows by Proposition~\ref{thm:qinc-indet} that the moment problem is indeterminate of order $\le 1/\beta$, hence of order $\le 1/\alpha$ since $\beta$ can be arbitrarily close to~$\alpha$. Defining
\begin{gather*}
\tilde{b}_n=\begin{cases} b_n,& 0\le n\le N,\\
\displaystyle b_N\exp\left[\beta\left(\frac{1}{N+1}+\frac{1}{N+2}+\dots +\frac{1}{n}\right)\right],& n>N,
\end{cases}
\end{gather*}
we have
\begin{gather*}
\frac{b_{n-1}}{b_n}\le \frac{\tilde{b}_{n-1}}{\tilde{b}_n},\qquad n\ge 1,
\end{gather*}
hence $V_n\le \tilde{V}_n$ by Proposition~\ref{thm:compV}.

By \eqref{eq:shiftV3} we get
\begin{gather}\label{eq:hpc5}
\limsup_n V_n^{1/n}\le \limsup_n \tilde{V}_n^{1/n}=\limsup_n\big(\tilde{V}_n^{(N)}\big)^{1/n}.
\end{gather}
Define now
\begin{gather*}
\widehat{b}_0 = b_N\exp\left[-\beta\left(1+\frac12+\dots +\frac{1}{N}\right)\right],\\
\widehat{b}_n = \widehat{b}_0\exp\left[\beta\left(1+\frac12+\dots +\frac{1}{n}\right)\right],\qquad n\ge 1.
\end{gather*}
Then $(\widehat{b}_n)$ satisfies $\widehat{b}_{n-1}/\widehat{b}_{n}=\exp(-\beta/n),\;n\ge 1$, so by \eqref{eq:shiftV3} and Theorem~\ref{thm:genlogcc} we get
\begin{gather}\label{eq:hpc6}
\limsup_n\big(\widehat{V}_n^{(N)}\big)^{1/n}=\limsup_n \widehat{V}_n^{1/n}\le {\rm e}^{v(\beta)/2}.
\end{gather}
Using $\widehat{b}_n^{(N)}=\tilde{b}_n^{(N)}$, we see by \eqref{eq:hpc5} and \eqref{eq:hpc6} that
\begin{gather*}
\limsup_n V_n^{1/n}\le {\rm e}^{v(\beta)/2}.
\end{gather*}
Since $1<\beta<\alpha$ is arbitrary, we finally get \eqref{eq:rootVn1} by continuity.
\end{proof}

\section[Existence of the matrix product $\mathcal A\mathcal H$]{Existence of the matrix product $\boldsymbol{\mathcal A\mathcal H}$}\label{section5}

In all of this section we consider a symmetric indeterminate moment problem given by a positive sequence $(b_n)$ necessarily satisfying $\sum 1/b_n<\infty$, cf.\ the beginning of Section~\ref{section4}. We shall give sufficient conditions ensuring property~(cs*).

Since the odd moments vanish, \eqref{eq:suffAH1} takes the form
\begin{gather}\label{eq:series}
\sum_{n\ge j/2}^\infty c_{2n-j}s_{2n}<\infty\qquad\text{for all}\quad j\ge 0.
\end{gather}

Using the expressions \eqref{eq:Un} and \eqref{eq:Vn} for $U_n$ and $V_n$ we find for $n\ge j+1$
 \begin{gather}\label{eq:UnVn}
c_{2n-j}s_{2n}= V_{2n-j}U_n\frac{b_0^2b_1^2\cdots b_{n-1}^2}{b_0b_1\cdots b_{2n-j-1}} =V_{2n-j}U_n\frac{b_0b_1\cdots b_{n-1}}{b_nb_{n+1}\cdots b_{2n-j-1}}.
\end{gather}

\begin{thm}\label{thm:AH-logcv2}Assume $(b_n)$ is eventually $q$-increasing, cf.\ Definition~{\rm \ref{thm:defqinc}}. For $j\ge 0$ we have
\begin{gather*}
\lim_{n\to \infty} (c_{2n-j}s_{2n})^{1/n}=0.
\end{gather*}
In particular the series \eqref{eq:series} is convergent, so the moment problem has property {\rm (cs*)} of Definition~{\rm \ref{thm:defAH}}.
\end{thm}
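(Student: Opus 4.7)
The plan is to use the factorization \eqref{eq:UnVn} and control each of the three factors. By Theorem~\ref{thm:evtlogconvex} the sequence $(U_n)$ is bounded, and by Theorem~\ref{thm:Vevtlogconv} the sequence $(V_n)$ is bounded; hence $U_n V_{2n-j} \le M$ for some constant $M>0$ uniformly in $n$ (and in $j$ for fixed $j$). The whole problem therefore reduces to proving that the ``ratio of products''
\begin{gather*}
R_n(j):=\frac{b_0 b_1\cdots b_{n-1}}{b_n b_{n+1}\cdots b_{2n-j-1}}
\end{gather*}
decays super-geometrically, more precisely that $R_n(j)^{1/n}\to 0$.

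To estimate $R_n(j)$ I would pair up factors using the eventual $q$-increasing hypothesis. Fix $n_0$ and $0<q<1$ with $b_{n-1}/b_n\le q$ for $n\ge n_0$. For $n$ large and $k\ge 0$ with $j+k\ge n_0$ one iterates this to get
\begin{gather*}
\frac{b_{j+k}}{b_{n+k}}=\prod_{i=1}^{n-j}\frac{b_{j+k+i-1}}{b_{j+k+i}}\le q^{n-j}.
\end{gather*}
Pairing the $n-j$ numerator factors $b_j,b_{j+1},\dots,b_{n-1}$ with the $n-j$ denominator factors $b_n,\dots,b_{2n-j-1}$ and splitting off the finitely many pairs where $j+k<n_0$ (which contribute only a constant depending on $j$ and $n_0$), one obtains
\begin{gather*}
R_n(j)\le C(j)\,(b_0 b_1\cdots b_{j-1})\,q^{(n-j)(n-j-n_0)}
\end{gather*}
for a constant $C(j)$ independent of $n$. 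Since $q<1$, this gives $R_n(j)^{1/n}\to 0$.

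Combining the two estimates, we get
\begin{gather*}
(c_{2n-j}s_{2n})^{1/n}\le M^{1/n}R_n(j)^{1/n}\longrightarrow 0,
\end{gather*}
which is the first assertion. Convergence of the series in \eqref{eq:series} then follows from the root test, so property~(cs*) holds by Definition~\ref{thm:defAH}.

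The only non-routine step is the bookkeeping in the estimate of $R_n(j)$: one has to be careful that only finitely many pairs (those with $j+k<n_0$) are ``bad'', while all others contribute a factor $\le q^{n-j}$, so that the total estimate is of order $q^{n^2}$ up to a constant. Once this is in place, the boundedness of $U_n$ and $V_{2n-j}$ (which is the substantive content, proved in the previous two sections) immediately yields the claim.
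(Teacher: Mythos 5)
Your plan is correct and follows essentially the same route as the paper: reduce via \eqref{eq:UnVn} to bounding the product ratio $R_n(j)$, invoke Theorems~\ref{thm:evtlogconvex} and~\ref{thm:Vevtlogconv} for the boundedness of $(U_n)$ and $(V_n)$, and then show $R_n(j)$ decays like $q^{cn^2}$. The paper's bookkeeping is marginally cleaner (it first enlarges $n_0$ so that $b_m\ge 1$ for $m\ge n_0$, drops the tail factors $b_{2n-n_0},\dots,b_{2n-j-1}\ge 1$ from the denominator, and then pairs $b_{n_0+k}$ with $b_{n+k}$, giving directly $q^{(n-n_0)^2}$ with no ``bad'' pairs), and your exponent $(n-j)(n-j-n_0)$ should read $(n-j)(n-n_0)$ --- there are $n-n_0$ good pairs, each contributing $q^{\,n-j}$ --- but this slip is harmless since the conclusion $R_n(j)^{1/n}\to 0$ survives either way.
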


\begin{proof}By Theorem~\ref{thm:evtlogconvex} and Theorem~\ref{thm:Vevtlogconv} $(U_n)$, $(V_n)$ are bounded. By increasing~$n_0$ from \eqref{eq:qinc} if necessary, we may assume that $b_n\ge 1$ for $n\ge n_0$ and that $n_0>j$. For suitable $C>0$ and $n>n_0$ we find
\begin{gather*}
c_{2n-j}s_{2n} \le C{b_0b_1\cdots b_{n-1}\over b_nb_{n+1}\cdots b_{2n-j-1}} =Cb_0b_1\cdots b_{n_0-1}{b_{n_0}b_{n_0+1}\cdots b_{n-1}\over b_nb_{n+1}\cdots b_{2n-j-1}}\\
\hphantom{c_{2n-j}s_{2n}}{} \le C b_0b_1\cdots b_{n_0-1}{b_{n_0}b_{n_0+1}\cdots b_{n-1}\over b_nb_{n+1}\cdots b_{2n-n_0-1} }\le C b_0b_1\cdots b_{n_0-1}q^{(n-n_0)^2},
\end{gather*}
and therefore
\begin{gather*}
\lim_{n\to\infty}(c_{2n-j}s_{2n})^{1/n}=0.\tag*{\qed}
\end{gather*}\renewcommand{\qed}{}
\end{proof}

Now we turn to a case which includes eventually log-concave sequences $(b_n)$ of sufficient growth.

\begin{thm}\label{thm:evtlogconcave} Assume $(b_n)$ satisfies
\begin{gather*}
{b_{n-1}\over b_{n}} \le {\rm e}^{-f(n)}, \qquad n\ge n_0\ge 1,
\end{gather*}
 where $f(n)>0$ for $n\ge n_0$ and
 \begin{gather*}
\alpha=\liminf_{n} nf(n)>1.
\end{gather*}
For $j\ge 0$ we then have
\begin{gather*}
\limsup_{n\to\infty}(c_{2n-j}s_{2n})^{1/n} \le
 k(\alpha):= \begin{cases} 4^{-\alpha}\exp(2u(\alpha)+v(\alpha)), & 1<\alpha <\infty,\\
0, & \alpha=\infty,
\end{cases}
\end{gather*}
where $u(\alpha)$, $v(\alpha)$ are defined in \eqref{eq:mbeta}, \eqref{eq:taubeta}. In particular the series {\rm (\ref{eq:series})} is convergent when $k(\alpha)<1$, and in this case the moment problem has property {\rm (cs*)} of Definition~{\rm \ref{thm:defAH}}.
\end{thm}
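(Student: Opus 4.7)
The plan is to decompose $c_{2n-j}s_{2n}$ via identity \eqref{eq:UnVn} into three factors and bound the $n$-th root of each factor separately. Writing $c_{2n-j}s_{2n}=V_{2n-j}U_n R_n$ with
\[
R_n:=\frac{b_0b_1\cdots b_{n-1}}{b_nb_{n+1}\cdots b_{2n-j-1}},
\]
Theorem~\ref{thm:thm2a} furnishes $\limsup_n U_n^{1/n}\le {\rm e}^{2u(\alpha)}$, while Theorem~\ref{thm:thm2b}, combined with $(2n-j)/n\to 2$, gives $\limsup_n V_{2n-j}^{1/n}\le {\rm e}^{v(\alpha)}$. The task therefore reduces to establishing the ratio estimate
\[
\limsup_{n\to\infty} R_n^{1/n}\le 4^{-\alpha},
\]
read as $R_n^{1/n}\to 0$ when $\alpha=\infty$; once this is in hand, the three limsups multiply to $k(\alpha)$.

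To control $R_n$ I would fix an arbitrary $1<\beta<\alpha$ and reuse the comparison device from the proofs of Theorems~\ref{thm:thm2a} and~\ref{thm:thm2b}. Choose $N\ge n_0$ so that $nf(n)\ge\beta$ for $n\ge N$ and set
\[
\tilde b_n=\begin{cases} b_n, & 0\le n\le N,\\[0.3ex]
b_N\exp\!\bigl[\beta\bigl(\tfrac{1}{N+1}+\tfrac{1}{N+2}+\dots+\tfrac{1}{n}\bigr)\bigr], & n>N.
\end{cases}
\]
Then $b_{n-1}/b_n\le\tilde b_{n-1}/\tilde b_n$ for every $n\ge 1$, so by telescoping $b_k/b_m\le\tilde b_k/\tilde b_m$ whenever $k\le m$. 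Factoring the numerator of $R_n$ as $b_0\cdots b_{j-1}$ times $b_j\cdots b_{n-1}$ and pairing the second factor termwise with the denominator yields $R_n\le K(j)\tilde R_n$, where $K(j)$ is independent of $n$ and $\tilde R_n$ is the analogous ratio formed from $\tilde b_n$.

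Since $\sum_{k=N+1}^n 1/k=\log n+O(1)$ we have $\tilde b_n\sim Cn^\beta$, so a direct Stirling computation gives
\[
\log\tilde R_n=\beta\Bigl(\sum_{k=1}^{n-1}\log k-\sum_{k=n}^{2n-j-1}\log k\Bigr)+O(n)=-2\beta n\log 2+o(n),
\]
whence $\tilde R_n^{1/n}\to 4^{-\beta}$. Letting $\beta\uparrow\alpha$ and invoking continuity of $u$, $v$ and of $\beta\mapsto 4^{-\beta}$ delivers the required limsup bound. The final assertion of the theorem then follows from the root test applied to the series~\eqref{eq:series} together with Lemma~\ref{thm:suffAH}.

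The main obstacle I expect is the ratio estimate itself: the numerator and denominator of $R_n$ contain different numbers of factors ($n$ versus $n-j$) and their indices are interleaved, so the pairing demands bookkeeping; moreover, the comparison sequence $\tilde b_n$ only controls $b_n$ up to a multiplicative constant coming from initial data, so only the asymptotic value $4^{-\beta}$ is accessible, which is why passing to the limit $\beta\uparrow\alpha$ is essential and why the sharper constants have to be absorbed into $K(j)$.
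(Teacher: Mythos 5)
Your proposal is correct and follows essentially the same route as the paper: the same decomposition $c_{2n-j}s_{2n}=V_{2n-j}U_n R_n$ from \eqref{eq:UnVn}, the same appeal to Theorems~\ref{thm:thm2a} and~\ref{thm:thm2b} for the $U_n$ and $V_n$ factors, and the same target $\limsup_n R_n^{1/n}\le 4^{-\alpha}$. The only difference is in the bookkeeping of the ratio estimate: the paper bounds $b_{n_1+k-1}/b_{n+k-1}$ directly and evaluates the resulting log-sum by a Riemann-sum integral, whereas you pass through the comparison sequence $\tilde b_n$ and invoke Stirling — a cosmetic rather than substantive variation.
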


\begin{proof}Let $j\ge 0$ be given and assume $1<\beta<\alpha$. By assumptions the sequence $(b_n)$ is strictly increasing for $n\ge n_0$ and tends to infinity. Thus, there exists $n_1\ge \max(n_0,j+1)$ such that
\begin{gather}\label{beta1}
b_n\ge 1,\qquad f(n)\ge {\beta \over n},\qquad n\ge n_1
\end{gather}
and for $n>n_1$ we then have
\begin{gather*}
 { b_0b_1\cdots b_{n-1}\over b_nb_{n+1}\cdots b_{2n-j-1}}\le b_0b_1\cdots b_{n_1-1}{b_{n_1}b_{n_1+1}\cdots b_{n-1}\over b_nb_{n+1}\cdots b_{2n-n_1-1}}.
\end{gather*}
By assumptions and by (\ref{beta1}) we get for $k\ge 1$
\begin{gather*}
{b_{n_1+k-1}\over b_{n+k-1}} \le {\rm e}^{-\sum\limits_{i=n_1+k}^{n+k-1} f(i)}\le {\rm e}^{-\beta \sum\limits_{i=n_1+k}^{n+k-1}{1\over i}} \le {\rm e}^{-\beta [\log (n+k)-\log (n_1+k)]},
\end{gather*}
and we have used \eqref{eq:hpd1} to get the last inequality.

Therefore we have
\begin{gather*}
 {b_{n_1}b_{n_1+1}\cdots b_{n-1}\over b_nb_{n+1}\cdots b_{2n-n_1-1}}\le {\rm e}^{-\beta \sum\limits_{k=1}^{n-n_1} [\log(n+k)-\log(n_1+k)]}.
\end{gather*}
As the function $x\longmapsto\log(n+x)-\log(n_1+x)$ is decreasing for $x>0,$ we get
\begin{gather*}
\sum_{k=1}^{n-n_1} [\log(n+k)-\log(n_1+k)]\ge \int_1^{n-n_1+1}[\log (n+x)-\log(n_1+x)]{\rm d}x\\
\hphantom{\sum_{k=1}^{n-n_1} [\log(n+k)-\log(n_1+k)]}{}=\int_{n+1}^{2n-n_1+1}\log x{\rm d}x -\int_{n_1+1}^{n+1}\log x{\rm d}x = 2n\log 2+{\rm o}(n).
\end{gather*}
Summarizing we obtain
\begin{gather*}
\limsup_{n\to\infty}\left ({ b_0b_1\cdots b_{n-1}\over b_nb_{n+1}\cdots b_{2n-j-1}}\right )^{1/n}\le 4^{-\beta}.
\end{gather*}
Since $1<\beta<\alpha$ is arbitrary we get
\begin{gather}\label{eq:factor}
\limsup_{n\to\infty}\left ({ b_0b_1\cdots b_{n-1}\over b_nb_{n+1}\cdots b_{2n-j-1}}\right )^{1/n}\le 4^{-\alpha}.
\end{gather}
Finally, by juxtaposing \eqref{eq:UnVn}, \eqref{eq:factor}, \eqref{eq:rootUn1} and \eqref{eq:rootVn1}, we get
the conclusion.
\end{proof}

\begin{rem}\label{thm:rem4} The function \eqref{eq:ka}, i.e.,
\begin{gather*}
 k(\alpha)=4^{-\alpha}\exp(2u(\alpha)+v(\alpha))
\end{gather*}
is decreasing for $\alpha>1$. Using the software Maple we get
\begin{gather*}
 k(1.68745)\approx 1.00001,\qquad k(1.68746)\approx 0,99997.
\end{gather*}
Thus the series \eqref{eq:series} is convergent for $\alpha\ge 1.68746$.

Examples with $\alpha=\infty$ are given in Remark~\ref{rem1}. For $b_n=(n+1)^c$ we have $\alpha=c$, so property (cs*) is satisfied for $c\ge 1.68746$.

In Section~\ref{section7} we will analyze the case $b_n=(n+1)^c$ by different methods and show that(cs*) actually holds for $c>3/2$, but (cs) does not hold for $1<c<3/2$.
\end{rem}

\begin{rem}Chen--Ismail \cite{Ch:I} considered the symmetrized version of the birth-and-death process with quartic rates studied in~\cite{B:V}. In~\cite{Ch:I} we have
 \begin{gather*}
b_n=2(n+1)\sqrt{(2n+1)(2n+3)},\qquad n\ge 0,
\end{gather*}
so Theorem~\ref{thm:evtlogconcave} holds with
\begin{gather*}
f(n)=\log(1+1/n)+\frac12\log\left(\frac{2n+3}{2n-1}\right),\qquad n\ge 1,
\end{gather*}
and therefore $\alpha:=\lim\limits_{n\to\infty} nf(n)=2$. This shows that (cs*) holds in this case.

More generally, consider the symmetrized version of the birth-and-death process with polynomial rates of degree $p\ge 3$, where $(b_n)$ is given in \cite[Proposition~3.2]{B:S3}. Take the special case $d_j=e_j-p/2$, $j=1,\ldots,p$ so that $(b_n)$ is eventually log-concave by \cite[Proposition~6.2]{B:S3} and hence eventually strictly increasing by Lemma~\ref{thm:lemBS2}. In this case it is easy to prove that Theorem~\ref{thm:evtlogconcave} yields $\alpha=p/2$, and therefore (cs*) holds when $p\ge 4$.

In the next section we shall look at the cubic case $p=3$.
\end{rem}

\section{The symmetrized cubic case}\label{section6}

In a series of papers Valent and coauthors have studied birth-and-death processes with polynomial rates leading to indeterminate Stieltjes moment problems, if the polynomial degree is at least three, see \cite{Va98} and references therein. In~\cite{Va98} Valent formulated conjectures about order and type of these Stieltjes problems. The conjecture about order was settled in Romanov~\cite{Ro}. Another proof was given in~\cite{B:S3} together with expressions for the type using multi-zeta values. These formulas have lead to a proof of the Valent conjecture about type in Bochkov \cite{Bo}.

Valent's conjectures were based on concrete calculation of order and type in a quartic case~\cite{B:V} and a cubic case~\cite{Va98}. In the latter Valent considered the Stieltjes moment problem associated with the birth-and-death process with the cubic rates
\begin{gather}\label{eq:cubicrates}
\lambda_n=(3n+1)(3n+2)^2,\qquad \mu_n=(3n)^2(3n+1),
\end{gather}
leading to the Stieltjes moment problem with recurrence coefficients from \eqref{eq:rec} given by
\begin{gather*}
a_n=\lambda_n+\mu_n,\qquad b_n=\sqrt{\lambda_n\mu_{n+1}},\qquad n\ge 0.
\end{gather*}

The purpose of this section is to estimate the moments in this cubic case. Using these estimates it was possible to prove that property (cs) does not hold for the corresponding symmetric moment problem. This was a surprise to us because (cs) turned out to be true in all examples, we had been able to calculate so far.

In order to distinguish the quantities associated with this special case from the general quantities, we shall equip them with a star $*$.

The recurrence coefficients from \eqref{eq:recsym} are
\begin{gather}\label{eq:cubicsym}
b_{2n}^{*}=\sqrt{\lambda_n}=\sqrt{3n+1}(3n+2),\qquad b_{2n-1}^{*}=\sqrt{\mu_n}=3n\sqrt{3n+1},
\end{gather}
cf.~\cite[p.~343]{B:S3}.

\begin{thm}\label{thm:AHnotex} For the symmetric moment problem given by $(b_n^*)$ and corresponding to the Stieltjes problem with rates~\eqref{eq:cubicrates} we have
\begin{gather*}
\sum_{n=0}^\infty c_{2n}^*s_{2n}^*=\infty,
\end{gather*}
so property {\rm (cs)} does not hold. The series
\begin{gather*}
\sum_{l=0}^\infty a_{2k,2l}^* s_{2l+2m}^*
\end{gather*}
is not absolutely convergent for $k\ge 0$, $m\ge 1$, so property {\rm (ac)} does not hold.
\end{thm}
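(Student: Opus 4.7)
The plan is to work from the explicit formulas for the moments $s_{2n}^*$ and for the entries of the reproducing-kernel matrix $\mathcal{A}^*$ that Section~\ref{section6} has set up prior to this theorem. The underlying mechanism is that the cubic case has $b_n^* \asymp n^{3/2}$, which is exactly the borderline exponent $c = 3/2$ flagged in Remark~\ref{thm:rem4}; the general upper bounds from Sections~\ref{section3}--\ref{section5} are not conclusive at this value, so one must use sharper asymptotics specific to Valent's cubic process. I would prove the two claims in parallel, since both reduce to showing that a certain product of a kernel entry and a moment fails to decay.

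For the first claim, I would start from the identity \eqref{eq:UnVn} with $j=0$, i.e.\ $c_{2n}^* s_{2n}^* = V_{2n}^* U_n^* \cdot (b_0^*\cdots b_{n-1}^*)/(b_n^*\cdots b_{2n-1}^*)$. Using \eqref{eq:cubicsym} and Stirling, the last factor is asymptotically of order $n^{3/4}\cdot 8^{-n}$. Combining this with the explicit formula for $a_{2n,2n}^*$ (equivalently $c_{2n}^* = \sqrt{a_{2n,2n}^*}$) produced earlier in Section~\ref{section6} gives a matching lower bound $V_{2n}^* \ge \mathrm{const}\cdot 8^n / n^{3/4}$, so that $c_{2n}^* s_{2n}^*$ is bounded below by a positive constant for all $n$ large. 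Since the general term of $\sum c_{2n}^* s_{2n}^*$ does not tend to zero, the series diverges and (cs) fails. Notice that this is consistent with Valent's analysis that the symmetric cubic problem has order $\rho = 2/3$, matching the $1/c$-formula with $c = 3/2$.

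For the second claim, the key observation is the identification $a_{2k,2l}^* = a_{k,l}^{(t)}$ from \eqref{eq:repksym}, where $a_{k,l}^{(t)}$ are the reproducing-kernel coefficients of the underlying Stieltjes cubic problem with moments $t_n = s_{2n}^*$. Combined with the Stieltjes sign pattern $a_{k,l}^{(t)} = (-1)^{k+l}|a_{k,l}^{(t)}|$ noted just before Proposition~\ref{thm:stieltjesAH}, this gives
$$\sum_{l=0}^\infty |a_{2k,2l}^*|\, s_{2l+2m}^* \;=\; \sum_{l=0}^\infty |a_{k,l}^{(t)}|\, t_{l+m}.$$
Using the explicit formulas from the section, one estimates $|a_{k,l}^{(t)}|$ from below for fixed $k$ and large $l$, obtaining roughly $|a_{k,l}^{(t)}| \ge \mathrm{const}_k / t_l$ up to polynomial factors, in parallel with the first claim. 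The shift $m \ge 1$ is crucial: it makes the ratio $t_{l+m}/t_l$ grow factorial-cubically in $l$, which no polynomial correction in $|a_{k,l}^{(t)}|$ can absorb, forcing divergence of the absolute series. The $m=0$ case is excluded precisely because for $k = 0$ the conditionally convergent series would collapse to $\delta_{k,0}$ against the true inverse identity.

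The main obstacle is the sharp asymptotic bookkeeping: both $s_{2n}^*$ and $c_{2n}^*$ carry factorial-cubic contributions (essentially of order $27^n(n!)^3$ and its reciprocal respectively) that must cancel in the product, leaving a polynomial correction whose exponent decides summability. This step requires the Valent-type explicit formulas rather than the softer bounds of Sections~\ref{section3}--\ref{section4}, and it is exactly because $c = 3/2$ is the critical value that the cancellations are tight enough to make divergence both possible and forced. A minor technical point is to ensure the lower bound for $|a_{k,l}^{(t)}|$ is uniform enough in $l$ (for fixed $k$) to combine with the $t_{l+m}$ growth; this is handled by reading off the $l$-asymptotics directly from Valent's explicit expression for the reproducing kernel.
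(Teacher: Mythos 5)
Your overall strategy coincides with the paper's: both parts of the theorem rest on the explicit formula for $s_{2n}^*$ obtained from Valent's density (Proposition~\ref{thm:s2n}, estimates~\eqref{eq:final}) and on the explicit kernel-coefficient formulas obtained by applying Lemma~\ref{thm:kernel} to $B_s$, $D_s$ (giving~\eqref{eq:even} and, on the diagonal, \eqref{eq:c2k}). Your handling of the second claim --- rewriting $\sum_l |a_{2k,2l}^*|s_{2l+2m}^*$ as a series of positive terms via the Stieltjes sign pattern and then reading off that the terms grow like a positive power of the summation index when $m\ge 1$ --- is in substance what the paper does.

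The first claim, however, contains a genuine error. You factor $c_{2n}^*s_{2n}^*=V_{2n}^*U_n^*\cdot\frac{b_0^*\cdots b_{n-1}^*}{b_n^*\cdots b_{2n-1}^*}$ and assert $V_{2n}^*\ge \mathrm{const}\cdot 8^n/n^{3/4}$, implicitly pairing this with only $U_n^*\ge 1$. But by~\eqref{eq:Vkcub} one has $\lim_n\root{n}\of{V_n^*}=\theta^{3/2}$, so $V_{2n}^*$ grows like $a^n$ with $a=\theta^3\approx 5.5$, strictly slower than $8^n$; the missing factor $(8/a)^n$ sits in $U_n^*$ (see~\eqref{eq:Uncub}), so it is the product $V_{2n}^*U_n^*$ that grows like $8^n$. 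Carrying the estimates through carefully --- equivalently, multiplying the paper's two-sided bounds~\eqref{eq:final} and~\eqref{eq:c2k} directly, as the paper does --- yields only $L_1 n^{-1}<c_{2n}^*s_{2n}^*<L_2 n^2$, not a lower bound by a positive constant. Thus your stated reason for divergence (``the general term does not tend to zero'') is not established by the available estimates; what they give is a harmonic-series comparison from the lower bound of order $n^{-1}$. The strategy is the paper's and the conclusion is true, but this step needs correcting.
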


The proof will be given after Proposition~\ref{thm:ck*}.

\subsection[Estimate of the moments $s_{2n}^*$]{Estimate of the moments $\boldsymbol{s_{2n}^*}$}
In accordance with \cite{Va98} define
\begin{gather}\label{eq:theta}
\theta:=\int_0^1\frac{{\rm d}u}{(1-u^3)^{2/3}}=\frac13 B(1/3,1/3)=\frac{\Gamma^3(1/3)}{2\pi\sqrt{3}},\qquad a:=\theta^3,
\end{gather}
where $B$ is the beta-function. Note that
\begin{gather*}
 \theta\approx 1.76663,\qquad a\approx 5.51370,\qquad \Gamma(1/3)\approx 2.67893.
\end{gather*}
The Stieltjes problem given by \eqref{eq:cubicrates} is indeterminate and Valent found the $B$ and $D$ functions from the Nevanlinna matrix. To give his results let
\begin{gather*}
\varphi_0(z)=\sum_{n=0}^\infty \frac{(-1)^n}{(3n)!}z^n,\qquad \varphi_2(z)=\sum_{n=0}^\infty \frac{(-1)^n}{(3n+2)!}z^{n}.
\end{gather*}

They are entire functions of order 1/3 and type 1.

Valent's result is that
\begin{gather}\label{eq:BDcubic}
D(z)=c z\varphi_2(az),\qquad B(z)-\frac{1}{t_0}D(z)=-\varphi_0(az),
\end{gather}
where
\begin{gather*}
c=\frac{\sqrt{3}}{(2\pi)^3}\Gamma(1/3)^6\approx 2.58105,
\end{gather*}
and $t_0$ is a certain negative constant defined as
\begin{gather*}
t_0=\lim_{x\to-\infty}D(x)/B(x),
\end{gather*}
cf.~\cite[Lemma~2.2.1]{B:V}, where it was called~$\alpha$. Valent deduced from~\eqref{eq:BDcubic} that the order and type of the Stieltjes problem are~$1/3$ and~$\theta$.

Associated with the constant $t+i\gamma$, where $t\in\mathbb{R}$, $\gamma>0$, there is a solution to the moment problem with the density on $\mathbb{R}$, cf.~\cite{B:C} and \cite[equation~(2.17)]{B:V},
\begin{gather*}
\nu_{t+{\rm i}\gamma}(x)=\frac{\gamma}{\pi}\frac{1}{(tB(x)-D(x))^2+\gamma^2B^2(x)}.
\end{gather*}
We now use
\begin{gather*}
t=\frac{t_0}{1+t_0^2},\qquad \gamma=\frac{t_0^2}{1+t_0^2}
\end{gather*}
leading to
\begin{gather*}
\nu_{t+{\rm i}\gamma}(x)=\frac{1}{\pi}\frac{1}{D^2(x)+(B(x)-D(x)/t_0)^2},
\end{gather*}
hence to a density for the Stieltjes problem with moments
\begin{gather*}
s_n=\frac{1}{\pi}\int_{-\infty}^\infty \frac{x^n}{\varphi_0^2(ax)+c^2 x^2\varphi_2^2(ax)}{\rm d}x,\qquad n\ge 0.
\end{gather*}

Let $B_s$, $D_s$ denote the functions from the Nevanlinna matrix in the symmetrized case. Then we know (cf., e.g.,~\cite[p.~48]{Be95})
\begin{gather*}
B_s(z)=B\big(z^2\big)-\frac{1}{t_0}D\big(z^2\big)=-\varphi_0\big(az^2\big),\qquad D_s(z)=(1/z)D\big(z^2\big)=c z\varphi_2\big(az^2\big).
\end{gather*}
This gives the symmetric density
\begin{gather*}
\big(\pi\big(B_s^2(x)+D_s^2(x)\big)\big)^{-1}=\frac{1}{\pi}\big(\varphi_0^2\big(ax^2\big)+c^2x^2\varphi_2^2\big(ax^2\big)\big)^{-1}
\end{gather*}
for the symmetric problem with even moments $s_{2n}^*$ and odd moments~0, i.e.,
\begin{gather}
s_{2n}^* = \frac{1}{\pi}\int_{-\infty}^\infty\frac{x^{2n}}{\varphi_0^2\big(ax^2\big)+c^2x^2\varphi_2^2\big(ax^2\big)}{\rm d}x
 = \frac{a^{-n-1/2}}{\pi}\int_0^\infty\frac{t^{n-1/2}}{\varphi_0^2(t)+(c^2/a)t\varphi_2^2(t)}{\rm d}t.\label{eq:s2n}
\end{gather}

Using \eqref{eq:s2n} we shall prove the following estimates of $s_{2n}^*$.

\begin{prop}\label{thm:s2n} There exist constants $K_i>0$, $i=1,2$, such that
\begin{gather}\label{eq:final}
K_1a^{-n}\Gamma(3n+3/2)<s_{2n}^*<K_2 a^{-n}n\Gamma(3n+3/2),\qquad n\ge 1,
\end{gather}
where $a$ is given by \eqref{eq:theta}. In particular
\begin{gather}\label{eq:final1}
\lim_{n\to\infty}\frac{\root{n}\of {s_{2n}^*}}{n^3}=\frac{27}{a{\rm e}^3}=\left(\frac{3}{{\rm e}\theta}\right)^3
\end{gather}
and
\begin{gather}\label{eq:Uncub}
\lim_{n\to\infty} \root{n}\of {U_n^*}=\left(\frac{2}{\theta}\right)^3.
\end{gather}
\end{prop}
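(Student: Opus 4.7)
\emph{Proof plan.} My strategy is to analyse the integral representation \eqref{eq:s2n} by a Laplace-type asymptotic argument, starting from a closed-form decomposition of $\varphi_0$ and $\varphi_2$ on the positive real axis via cube roots of unity. With $\omega = e^{2\pi{\rm i}/3}$, the standard identity
\begin{gather*}
\sum_{m=0}^\infty \frac{z^m}{(3m+k)!} = \frac{1}{3 z^{k/3}}\sum_{j=0}^{2}\omega^{-jk}e^{\omega^j z^{1/3}}
\end{gather*}
applied with $z\mapsto -t$ for $t>0$ (so that $(-t)^{1/3} = -t^{1/3}$) yields, with $\tau := t^{1/3}$ and $\xi := \sqrt{3}\tau/2$, the exact formulas
\begin{gather*}
\varphi_0(t) = \tfrac{1}{3}e^{-\tau} + \tfrac{2}{3}e^{\tau/2}\cos\xi, \qquad
\varphi_2(t) = \tfrac{1}{3\tau^2}\bigl(e^{-\tau} + 2e^{\tau/2}\cos(\xi - 2\pi/3)\bigr).
\end{gather*}
The substitution $t = u^3$ turns \eqref{eq:s2n} into $s_{2n}^* = (3a^{-n-1/2}/\pi)\int_0^\infty u^{3n+1/2}/F(u)\,{\rm d}u$, and inserting the two formulas above into $F(u) := \varphi_0^2(u^3) + (c^2/a)u^3\varphi_2^2(u^3)$ gives, now with $\xi := \sqrt{3}u/2$ and $r := c^2/(au)$,
\begin{gather*}
F(u) = \tfrac{4}{9}e^{u}\bigl[\cos^2\xi + r\cos^2(\xi - 2\pi/3)\bigr] + O\bigl(e^{u/2}\bigr),\qquad u\to\infty.
\end{gather*}

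The next step is a two-sided control of the bracketed trigonometric factor. The identity $\cos^2\xi + \cos^2(\xi-2\pi/3) + \cos^2(\xi+2\pi/3) = 3/2$ forces $\cos^2\xi + \cos^2(\xi-2\pi/3) \ge 1/2$ for every $\xi$, whence for all $0 < r \le 1$,
\begin{gather*}
\tfrac{r}{2} \,\le\, \cos^2\xi + r\cos^2(\xi-2\pi/3) \,\le\, 1+r.
\end{gather*}
Multiplying by $\tfrac{4}{9}e^u$ and observing that the main lower-bound term $(2c^2/(9au))e^u$ dominates the $O(e^{u/2})$ error for $u$ large, one obtains the sandwich $c_1 u^{-1}e^u \,\le\, F(u) \,\le\, c_2 e^u$ valid for $u \ge u_0$; the integral over $[0,u_0]$ is uniformly bounded in $n$ and hence negligible.

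Plugging these bounds into the integral for $s_{2n}^*$ and using $\int_0^\infty u^{3n+\alpha}e^{-u}\,{\rm d}u = \Gamma(3n+\alpha+1)$ gives $c_1' a^{-n}\Gamma(3n+3/2) \,\le\, s_{2n}^* \,\le\, c_2' a^{-n}\Gamma(3n+5/2)$, and since $\Gamma(3n+5/2) = (3n+3/2)\Gamma(3n+3/2)$ this is exactly \eqref{eq:final}. Equation \eqref{eq:final1} follows at once from Stirling's formula for $\Gamma(3n+3/2)$. For \eqref{eq:Uncub}, I read off from \eqref{eq:cubicsym} that $(b_{2k}^*)^2 = (3k+1)(3k+2)^2$ and $(b_{2k-1}^*)^2 = 9k^2(3k+1)$ are both asymptotic to $27k^3$; a direct Stirling computation then gives $\bigl((b_0^*\cdots b_{n-1}^*)^2\bigr)^{1/n}/n^3 \to 27/(8e^3)$, and dividing \eqref{eq:final1} by this limit yields $\lim\limits_n \root{n}\of{U_n^*} = 8/a = (2/\theta)^3$.

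The hard part is the lower bound on $F(u)$ near the zeros of $\cos\xi$: there the leading $\tfrac{4}{9}e^u\cos^2\xi$-term vanishes and $F(u)$ is controlled only by the smaller term $(4r/9)e^u = O(e^u/u)$. This is precisely what creates the factor-of-$n$ asymmetry between the two sides of \eqref{eq:final}, and the uniform trigonometric lower bound $\cos^2\xi + r\cos^2(\xi-2\pi/3) \ge r/2$, whose proof crucially relies on the non-coincidence of the zeros of $\cos\xi$ and $\cos(\xi-2\pi/3)$, is the estimate that makes the whole Laplace argument tight.
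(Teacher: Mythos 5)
Your proof is correct and follows essentially the same route as the paper: both decompose $\varphi_0$, $\varphi_2$ via cube roots of unity, substitute $t=u^3$ into \eqref{eq:s2n}, and control the denominator through the trigonometric bound $\cos^2\xi+\cos^2(\xi-2\pi/3)\ge 1/2$ (the paper states the equivalent $\cos^2\xi+\cos^2(\xi+\pi/3)\ge 1/2$ without the clean three-cosine-identity derivation you give). Two harmless slips: the error term in your expansion of $F(u)$ is $O\big(e^{-u/2}\big)$ (from cross terms $e^{-u}\cdot e^{u/2}$), not $O\big(e^{u/2}\big)$, and the contribution from $[0,u_0]$ grows geometrically in $n$ once $u_0>1$ (you need $u_0\ge c^2/a\approx 1.21$ for $r\le 1$), rather than being ``uniformly bounded in $n$'' -- but both are dominated by $\Gamma(3n+\cdot)$, so the conclusion stands.
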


\begin{proof}In \cite{G:L:V} some interesting formulas are given about the trigonometric functions of order~3. These functions are defined as
\begin{gather*}
\sigma_l(u)=\sum_{n=0}^\infty (-1)^n\frac{u^{3n+l}}{(3n+l)!},\qquad l=0,1,2.
\end{gather*}
With ${\rm j}={\rm e}^{{\rm i}\pi/3}$ we have
\begin{gather*}
\sigma_0(u) = \frac13\big({\rm e}^{-u}+{\rm e}^{{\rm j}u}+{\rm e}^{\overline{{\rm j}}u}\big)=\frac13\left({\rm e}^{-u}+2\cos\left(\frac{\sqrt{3}}{2}u\right){\rm e}^{u/2}\right)=\varphi_0\big(u^3\big),\\
\sigma_2(u) = \frac13\big({\rm e}^{-u} -{\rm j} {\rm e}^{ju}- \overline{{\rm j}}{\rm e}^{\overline{{\rm j}}u}\big)=\frac13\left({\rm e}^{-u}-2\cos\left(\frac{\sqrt{3}}{2}u+\pi/3\right){\rm e}^{u/2}\right) = u^2\varphi_2\big(u^3\big),
\end{gather*}
and there is a similar expression for $\sigma_1(u)$, which we do not need.

Substituting $t=u^3$ in \eqref{eq:s2n} we get
\begin{gather*}
s_{2n}^*=\frac{3a^{-n-1/2}}{\pi}\int_0^\infty\frac{u^{3n+1/2}}{\sigma_0^2(u)+\big(c^2/(au)\big)\sigma_2^2(u)}{\rm d}u
\end{gather*}
or
\begin{gather*}
s_{2n}^*=\frac{27a^{-n-1/2}}{\pi}\int_0^\infty\frac{u^{3n+1/2}{\rm e}^{-u}}{N(u)}{\rm d}u
\end{gather*}
with
\begin{gather*}
N(u)=\left({\rm e}^{-3u/2}+2\cos\left(\frac{\sqrt{3}}{2}u\right)\right)^2+\frac{c^2}{au}
\left({\rm e}^{-3u/2}-2\cos\left(\frac{\sqrt{3}}{2}u+\pi/3\right)\right)^2.
\end{gather*}
One can prove that $N(u)\le 9$ for $u\ge 0$. (The constant comes from the software Maple. That~$N(u)$ is bounded for $u\ge 0$ is however elementary, since the first term is bounded by~9, and using that
\begin{gather*}
{\rm e}^{-3u/2}-2\cos\left(\frac{\sqrt{3}}{2}u+\pi/3\right)
\end{gather*}
vanishes for $u=0$, also the second term is bounded.)

Furthermore,
\begin{gather*}
N(u)\ge k_1:=\big({\rm e}^{-3/2}+2\cos\big(\sqrt{3}/2\big)\big)^2\approx 2.30690,\qquad 0\le u\le 1,
\end{gather*}
and using that $c^2/a\approx 1.20823>1$, we get for $u\ge 1$:
\begin{gather*}
uN(u) > \left({\rm e}^{-3u/2}+2\cos\left(\frac{\sqrt{3}}{2}u\right)\right)^2+ \left({\rm e}^{-3u/2}-2\cos\left(\frac{\sqrt{3}}{2}u+\pi/3\right)\right)^2\\
\hphantom{uN(u)}{} = 2{\rm e}^{-3u}+4\cos^2\left(\frac{\sqrt{3}}{2}u\right)+4\cos^2\left(\frac{\sqrt{3}}{2}u+\pi/3\right)\\
\hphantom{uN(u)>}{} + 4{\rm e}^{-3u/2}\left(\cos\left(\frac{\sqrt{3}}{2}u\right)-\cos\left(\frac{\sqrt{3}}{2}u+\pi/3\right)\right)\\
\hphantom{uN(u)}{} \ge f(u):=2{\rm e}^{-3u}-8{\rm e}^{-3u/2}+2\ge f(1),
\end{gather*}
where we have used that $\cos^2(x)+\cos^2(x+\pi/3)\ge 1/2$ and that $f(u)$ is increasing (for $u\ge 0$). This shows that
\begin{gather*}
uN(u)\ge k_2:=f(1)\approx 0.31453, \qquad u\ge 1.
\end{gather*}

We then get
\begin{gather*}
 \frac{3a^{-n-1/2}}{\pi}\Gamma(3n+3/2)<s_{2n}^*<\frac{27a^{-n-1/2}}{\pi}\left(\int_0^1
\frac{u^{3n+1/2}{\rm e}^{-u}}{k_1}{\rm d}u +\int_1^\infty\frac{u^{3n+3/2}{\rm e}^{-u}}{k_2}{\rm d}u\right)\\
\hphantom{\frac{3a^{-n-1/2}}{\pi}\Gamma(3n+3/2)<s_{2n}^*}{} <\frac{27a^{-n-1/2}}{\pi}\left(\frac{1}{k_1}+\frac{1}{k_2}\Gamma(3n+5/2)\right),
\end{gather*}
which shows \eqref{eq:final}.

Note that by Stirling's formula
\begin{gather}\label{eq:Stirlingdelta}
\Gamma(3n+\delta)\sim 3^{\delta-1/2} \sqrt{2\pi}\big(27/{\rm e}^3\big)^n n^{3n+\delta-1/2},
\end{gather}
so \eqref{eq:final} leads to \eqref{eq:final1}.

From \eqref{eq:cubicsym} we get
\begin{gather}\label{eq:prodsym}
b_0^{*}b_1^{*}\cdots b_{2n}^{*}=\Gamma(3n+2),\qquad b_0^{*}b_1^{*}\cdots b_{2n-1}^{*}=\sqrt{3n+1}\Gamma(3n+1),
\end{gather}
which together with \eqref{eq:final1} gives \eqref{eq:Uncub}.
\end{proof}

\subsection[The matrix $\mathcal A$ in the symmetrized cubic case]{The matrix $\boldsymbol{\mathcal A}$ in the symmetrized cubic case}

Let $(S_n)$ denote the orthonormal polynomials in the symmetrized cubic case. They are given by~\eqref{eq:recsym(s)}, with ${}^sb_n=b_n^*$ from~\eqref{eq:cubicsym}.

 It is known that, cf.~\eqref{eq:repksym},
\begin{gather*}
K_s(z,w) = \sum_{n=0}^\infty S_n(z)S_n(w)=\frac{D_s(z)B_s(w)-B_s(z)D_s(w)}{w-z} = \sum_{k,l=0}^\infty a_{k,l}^{(s)}z^kw^l,
\end{gather*}
where the third expression can be found in \cite[equation~(1.8)]{Be95}.

We shall find explicit formulas for $a_{k,l}^{(s)}$ based on the following lemma.

\begin{lem}\label{thm:kernel} Let $f$, $g$ be entire functions with power series
\begin{gather*}
f(z)=\sum_{n=0}^\infty \alpha_nz^n,\qquad g(z)=\sum_{n=0}^\infty \beta_nz^n.
\end{gather*}
Then the entire function of two variables
\begin{gather*}
L(z,w)=\frac{f(z)g(w)-f(w)g(z)}{z-w}
\end{gather*}
has the power series
\begin{gather*}
L(z,w)=\sum_{k,l=0}^\infty \gamma_{k,l}z^kw^l,
\end{gather*}
where
\begin{gather}\label{eq:ker}
\gamma_{k,l}=\sum_{j=0}^l (\alpha_{j+k+1}\beta_{l-j}-\beta_{j+k+1}\alpha_{l-j}).
\end{gather}
\end{lem}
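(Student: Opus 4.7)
The identity to prove is a coefficient identity for the function $L(z,w)=(f(z)g(w)-f(w)g(z))/(z-w)$, which is entire on $\mathbb{C}^2$ because the numerator vanishes on the diagonal $z=w$ and is therefore divisible by $z-w$ in the ring of entire functions of two variables. My plan is to compare coefficients in the polynomial identity $(z-w)L(z,w)=f(z)g(w)-f(w)g(z)$, extract a simple first-order recurrence for the unknown Taylor coefficients of $L$, and then verify by induction on $l$ that the claimed expression solves that recurrence.

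First, I would expand the right-hand side. A direct multiplication of the power series followed by swapping the dummy indices in the second term gives
\begin{equation*}
f(z)g(w)-f(w)g(z)=\sum_{m,n\ge 0}(\alpha_m\beta_n-\alpha_n\beta_m)z^m w^n.
\end{equation*}
Writing $L(z,w)=\sum_{k,l\ge 0}\gamma_{k,l}z^k w^l$ with unknown coefficients, multiplying this series by $z-w$, and matching the coefficient of $z^k w^l$ on both sides yields the recurrence
\begin{equation*}
\gamma_{k-1,l}-\gamma_{k,l-1}=\alpha_k\beta_l-\alpha_l\beta_k,\qquad k,l\ge 0,
\end{equation*}
with the convention $\gamma_{-1,l}=\gamma_{k,-1}=0$. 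Specializing to $l=0$ (or equivalently reading off $L(z,0)=(f(z)g(0)-f(0)g(z))/z$ directly) gives the boundary values $\gamma_{k,0}=\alpha_{k+1}\beta_0-\alpha_0\beta_{k+1}$, which agree with the claimed formula for $l=0$.

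Next I would check by induction on $l$ that the proposed expression satisfies the recurrence. Re-indexing $j\mapsto i-1$ in the formula at level $l$ rewrites it as $\gamma_{k,l}=\sum_{i=1}^{l+1}(\alpha_{k+i}\beta_{l+1-i}-\alpha_{l+1-i}\beta_{k+i})$, while the candidate for $\gamma_{k-1,l+1}$ has the same summand range extended to $i=0$; subtracting leaves exactly the $i=0$ term $\alpha_k\beta_{l+1}-\alpha_{l+1}\beta_k$, which is precisely the required recurrence at level $l+1$. Since the boundary values together with the first-order recurrence determine $(\gamma_{k,l})$ uniquely, this identifies them with the claimed expression.

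No substantive obstacle is expected; the only mild point to be careful about is the justification that $L$ is genuinely entire so that its Taylor coefficients are unambiguously defined, but this is immediate from the divisibility of $f(z)g(w)-f(w)g(z)$ by $z-w$. The rest is a bookkeeping induction.
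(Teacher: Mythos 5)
Your proof is correct, and it takes a genuinely different route from the paper's. The paper proceeds constructively: it splits $L(z,w)=\frac{f(z)-f(w)}{z-w}\,g(w)-f(w)\,\frac{g(z)-g(w)}{z-w}$, expands the divided difference $\frac{f(z)-f(w)}{z-w}=\sum_{j,k\ge 0}\alpha_{j+k+1}z^kw^j$, multiplies through by $g(w)$, collects the coefficient of $z^kw^l$, and appeals to the $f\leftrightarrow g$ antisymmetry to assemble the second half of the sum. You instead work with the cleared-denominator identity $(z-w)L=f(z)g(w)-f(w)g(z)$, extract the first-order recurrence $\gamma_{k-1,l}-\gamma_{k,l-1}=\alpha_k\beta_l-\alpha_l\beta_k$ with the vanishing boundary $\gamma_{-1,l}=\gamma_{k,-1}=0$, and then verify by the reindexing $j\mapsto i-1$ that the closed form satisfies this recurrence; uniqueness along antidiagonals (consistent because $\sum_{k}(\alpha_k\beta_{m+1-k}-\alpha_{m+1-k}\beta_k)=0$ by antisymmetry) finishes the argument. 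The paper's method derives the formula without guessing it, which is natural when one has to discover~\eqref{eq:ker}; your method is a shorter verification once the formula is in hand, at the small cost of invoking a uniqueness argument for the recurrence. Both are fully rigorous.
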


\begin{rem}\label{rem5}By construction $L(z,w)=L(w,z)$ and therefore $\gamma_{k,l}=\gamma_{l,k}$, but this is not obvious from~\eqref{eq:ker}.
\end{rem}

\begin{proof}We have
\begin{gather*}
L(z,w)=\frac{f(z)-f(w)}{z-w} g(w)-f(w)\frac{g(z)-g(w)}{z-w}
\end{gather*}
and
\begin{gather*}
\frac{f(z)-f(w)}{z-w} = \sum_{n=1}^\infty\alpha_n\frac{z^n-w^n}{z-w}=\sum_{n=1}^\infty\alpha_n \sum_{j=0}^{n-1}z^{n-1-j}w^j\\
\hphantom{\frac{f(z)-f(w)}{z-w}}{} = \sum_{j=0}^\infty w^j\sum_{n=j+1}^\infty \alpha_n z^{n-1-j}=\sum_{j,k=0}^\infty\alpha_{j+k+1}z^kw^j.
\end{gather*}
Therefore,
\begin{gather*}
\frac{f(z)-f(w)}{z-w} g(w) = \sum_{m=0}^\infty \beta_mw^m\sum_{j,k=0}^\infty\alpha_{j+k+1}z^kw^j = \sum_{k,l=0}^\infty z^kw^l\left(\sum_{j=0}^l \alpha_{j+k+1}\beta_{l-j}\right),
\end{gather*}
and by symmetry we get \eqref{eq:ker}.
\end{proof}

Write
\begin{gather*}
f(z)=B_s(z)=\sum_{n=0}^\infty\alpha_n z^n,\qquad \alpha_{2n}=\frac{-(-a)^{n}}{(3n)!},\qquad \alpha_{2n+1}=0,\\
g(z)=D_s(z)= \sum_{n=0}^\infty\beta_nz^n,\qquad \beta_{2n}=0,\qquad \beta_{2n+1}=\frac{c(-a)^n}{(3n+2)!}.
\end{gather*}
Then $K_s(z,w)=L(z,w)$ from Lemma~\ref{thm:kernel}, and we get
\begin{gather*}
a_{k,l}^{(s)}=a_{k,l}^*=\sum_{j=0}^l (\alpha_{j+k+1}\beta_{l-j}-\beta_{j+k+1}\alpha_{l-j}).
\end{gather*}

In particular $a_{0,0}^*=-\alpha_0\beta_1=c/2$ and for $k\ge 1$
\begin{gather*}
a_{k,k}^*=\sum_{j=0}^k(\alpha_{j+k+1}\beta_{k-j}-\beta_{j+k+1}\alpha_{k-j}).
\end{gather*}
Let us first focus on $k$ even ($\ge 2$):
\begin{gather*}
a_{2k,2k}^* = \sum_{j=0}^{2k}(\alpha_{j+2k+1}\beta_{2k-j}-\beta_{j+2k+1}\alpha_{2k-j}) = \sum_{j=0}^{k-1} \alpha_{2j+2k+2}\beta_{2k-2j-1}-\sum_{j=0}^{k}\beta_{2j+2k+1}\alpha_{2k-2j}\\
\hphantom{a_{2k,2k}^*}{} = c\sum_{j=0}^{k-1} \frac{-(-a)^{j+k+1}}{(3j+3k+3)!}\frac{(-a)^{k-j-1}}{(3k-3j-1)!} -c\sum_{j=0}^{k} \frac{(-a)^{j+k}}{(3k+3j+2)!}\frac{-(-a)^{k-j}}{(3k-3j)!}\\
\hphantom{a_{2k,2k}^*}{}=c a^{2k}\left(\frac{1}{(6k+2)!}+\sum_{j=0}^{k-1}\frac{6j+3}{(3k+3j+3)!(3k-3j)!}\right),
\end{gather*}
hence
\begin{gather}\label{eq:even}
a_{2k,2k}^*=\frac{ca^{2k}}{(6k+3)!}\sum_{j=0}^k (6j+3)\binom{6k+3}{3k-3j}.
\end{gather}

For $k$ odd we similarly get
\begin{gather*}
a_{2k-1,2k-1}^*=\frac{6ca^{2k-1}k}{(6k+1)!}\sum_{j=0}^{k-1} (6j+3)\binom{6k+1}{3k-3j-1}.
\end{gather*}

Using the following estimates
\begin{gather*}
3\binom{6k+3}{3k} < \sum_{j=0}^k (6j+3)\binom{6k+3}{3k-3j} < \binom{6k+3}{3k}\sum_{j=0}^k (6j+3)=3(k+1)^2\binom{6k+3}{3k},
\end{gather*}
we get for suitable $C_i>0$, $i=1,2$,
\begin{gather}\label{eq:c2k}
C_1 \big(a {\rm e}^3/27\big)^k k^{-3k-2} <c_{2k}^*=\sqrt{a_{2k,2k}^*}< C_2 \big(a {\rm e}^3/27\big)^k k^{-3k}.
\end{gather}
From \eqref{eq:c2k} and similar inequalities for $c_{2k-1}^*=\sqrt{a_{2k-1,2k-1}^*}$ together with~\eqref{eq:prodsym}, we get

\begin{prop}\label{thm:ck*}
\begin{gather}\label{eq:ck*}
\lim_{n\to\infty} n^{3/2}\root{n}\of{c_{n}^*}=\left(\frac{2{\rm e}\theta}{3} \right)^{3/2}.
\end{gather}
and
\begin{gather}\label{eq:Vkcub}
\lim_{n\to\infty} \root{n} \of{V_n^*}=\theta^{3/2}.
\end{gather}
\end{prop}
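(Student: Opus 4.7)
The estimate \eqref{eq:c2k} together with its counterpart for $c_{2k-1}^{*}$, already derived just before the proposition, gives two-sided bounds of the form
\begin{gather*}
C_1'\big(ae^3/27\big)^{k-1/2}k^{-3k+1}\le c_{2k-1}^{*}\le C_2'\big(ae^3/27\big)^{k-1/2}k^{-3k+2},
\end{gather*}
so the first step is to rewrite these bounds using $a=\theta^3$: since $(ae^3/27)^{1/2}=(\theta e/3)^{3/2}$, both $c_{2k}^{*1/(2k)}$ and $c_{2k-1}^{*1/(2k-1)}$ are squeezed between expressions that are asymptotic to $(\theta e/3)^{3/2}\,k^{-3/2}$, because the polynomial prefactors $k^{\pm O(1)}$ and the constants $C_i$, $C_i'$ contribute a factor tending to $1$ when raised to the $1/k$ power. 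Combining the even and odd cases and replacing $k$ by $\lfloor n/2\rfloor$ will yield
\begin{gather*}
n^{3/2}\root{n}\of{c_n^{*}}\longrightarrow 2^{3/2}(\theta e/3)^{3/2}=(2e\theta/3)^{3/2},
\end{gather*}
which is \eqref{eq:ck*}. The only care needed is that $n^{3/2}=(2k)^{3/2}=2^{3/2}k^{3/2}$ in the even case and similarly in the odd case, which accounts for the factor~$2^{3/2}$.

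For \eqref{eq:Vkcub} I combine the above with \eqref{eq:prodsym}. Writing $V_{2n+1}^{*}=\Gamma(3n+2)\,c_{2n+1}^{*}$ and $V_{2n}^{*}=\sqrt{3n+1}\,\Gamma(3n+1)\,c_{2n}^{*}$, and using the Stirling asymptotics \eqref{eq:Stirlingdelta}
\begin{gather*}
\Gamma(3n+\delta)\sim 3^{\delta-1/2}\sqrt{2\pi}\,(27/e^3)^n n^{3n+\delta-1/2},
\end{gather*}
a direct computation shows that $[\Gamma(3n+\delta)]^{1/(2n+\varepsilon)}\sim (27/e^3)^{1/2}\,n^{3/2}$ for any fixed $\delta$ and $\varepsilon\in\{0,1\}$; the key algebraic identity here is $(3n+\delta-1/2)/(2n+\varepsilon)-3/2\to 0$, so the $n^{3/2}$ leading behaviour is exact in the limit. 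Multiplying by $c_n^{*1/n}\sim (e\theta/3)^{3/2}n^{-3/2}$ (obtained from step one) gives
\begin{gather*}
\root{n}\of{V_n^{*}}\longrightarrow (27/e^3)^{1/2}(e\theta/3)^{3/2}=\frac{3^{3/2}}{e^{3/2}}\cdot\frac{e^{3/2}\theta^{3/2}}{3^{3/2}}=\theta^{3/2},
\end{gather*}
which is \eqref{eq:Vkcub}.

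The main point requiring a little care is that the expressions $c_n^{*}$ and $\Gamma(3n+\delta)$ contain factors of the form $n^{cn}$ whose $n$-th roots are not constants but grow like $n^c$, so one must track the $n^{3/2}$ prefactor precisely; fortunately it cancels exactly between the $\Gamma$-factor and $c_n^{*}$ in $V_n^{*}$, which is what produces the clean limit $\theta^{3/2}$. I expect no genuine obstacle: once \eqref{eq:c2k} (and its odd analogue) and \eqref{eq:prodsym} are in hand the proof is a careful but routine application of Stirling.
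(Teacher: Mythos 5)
Your proposal is correct and takes exactly the approach the paper intends but leaves to the reader: combine the two-sided bounds \eqref{eq:c2k} (and their odd-index analogue) with \eqref{eq:prodsym} and Stirling, noting that all constants and polynomial prefactors wash out upon taking $n$-th roots. The only inaccuracy is cosmetic: your odd-index bounds write the exponential base as $(ae^3/27)^{k-1/2}$ and carry the polynomial exponents $k^{-3k+1}$, $k^{-3k+2}$, whereas a careful Stirling computation gives $a^{k-1/2}(e^3/27)^k k^{-3k\pm 1/2}$ up to constants; but since both discrepancies are absorbed into constants and bounded powers of $k$, they vanish in the $1/(2k-1)$-th root, so your argument is unaffected.
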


Equation \eqref{eq:ck*} is a sharpening of equation~\eqref{eq:otcn}, by replacing $\limsup$ by a limit. Note that $\rho=2/3$ is the order and $\tau=\theta$ is the type of the symmetric moment problem. Equation \eqref{eq:Vkcub} is a sharpening of \eqref{eq:rootVn1} applied to $b_n^{*}$ given by~\eqref{eq:cubicsym}.

\begin{proof}[Proof of Theorem~\ref{thm:AHnotex}] Putting \eqref{eq:final} and \eqref{eq:c2k} together and using \eqref{eq:Stirlingdelta}, we get
\begin{gather*}
 L_1n^{-1} < s_{2n}^*c_{2n}^* <L_2 n^2
\end{gather*}
for suitable $L_i>0$, $i=1,2$.

This shows divergence of $\sum s_{2n}^*c_{2n}^*$, which is the first part of Theorem~\ref{thm:AHnotex}.

We shall next show that most of the series involved in defining $\mathcal A\mathcal H$ are not absolutely convergent.
We know that $a_{2k,2l+1}^*=0$ and equation~\eqref{eq:even} can easily be generalized to
\begin{gather*}
a_{2k,2l}^*=\frac{c(-a)^{k+l}}{(3k+3l+3)!}\sum_{j=0}^l \binom{3k+3l+3}{3l-3j}(3k-3l+6j+3).
\end{gather*}

By symmetry we can assume that $k\ge l$, so all terms in the sum are positive. We then get
\begin{gather*}
(3(k-l)+3)\binom{3k+3l+3}{3l} < \sum_{j=0}^l (3(k-l)+6j+3)\binom{3k+3l+3}{3l-3j}\\
\hphantom{(3(k-l)+3)\binom{3k+3l+3}{3l}}{} < 3(k+1)(l+1)\binom{3k+3l+3}{3l},
\end{gather*}
because
\begin{gather*}
\binom{3k+3l+3}{3l-3j} \le \binom{3k+3l+3}{3l}, \qquad 0\le j\le l\le k,
\end{gather*}
hence
\begin{gather*}
c a^{k+l}\frac{3(k-l)+3}{(3k+3)!(3l)!}< |a_{2k,2l}^*|<c a^{k+l}\frac{3(k+1)(l+1)}{(3k+3)!(3l)!}.
\end{gather*}
If we look at $\sum_k |a_{2k,2l}^*|s_{2k+2m}^*$, for each fixed $l$, $m$, it is enough to examine convergence of the series for $k>l$, i.e., of the terms
\begin{gather*}
\frac{a^k k}{(3k+2)!}s_{2k+2m}^*.
\end{gather*}
Using \eqref{eq:final}, we get that for $k$ sufficiently large
\begin{gather*}
M_1k^{3m-3/2} < \frac{a^k k}{(3k+2)!}s_{2k+2m}^* < M_2k^{3m-1/2}
\end{gather*}
for constants $M_1$, $M_2$ depending on $l$, $m$ but independent of~$k$. This shows that the series $\sum_k |a_{2k,2l}^*|s_{2k+2m}^*$ diverges for $m\ge 1$, but we do not get information about convergence/diver\-gen\-ce when $m=0$.

We conclude that most of the series involved in the calculation of $\mathcal A\mathcal H$ are not absolutely convergent.
\end{proof}

\section[The symmetric moment problem with $b_n=c^c(n+1)^c$, $c>0$]{The symmetric moment problem with $\boldsymbol{b_n=c^c(n+1)^c}$, $\boldsymbol{c>0}$}\label{section7}

The sequence $b_n=b_n(c)=c^c(n+1)^c$ is log-concave for $c>0$. By the Carleman condition the moment problem is determinate for $0<c\le 1$, and for $c>1$ it is indeterminate of order $\rho=1/c$, cf.~\cite[Theorem~4.11]{B:S2}. The factor $c^c$ does not influence the order of the moment problem, but it is convenient when $c=3/2$ for comparison with the symmetrized cubic case.

The corresponding orthonormal polynomials $P_n(c;x)$ do not seem to be known except in the cases $c=1/2$ and $c=1$, where they are scaled versions of the Hermite polynomials and Meixner--Pollaczek polynomials. In a precise manner
\begin{gather*}
P_n(1/2;x)=(2^nn!)^{-1/2}H_n\big(x/\sqrt{2}\big),\qquad P_n(1;x)=P_n^{(1/2)}(x/2;\pi/2),
\end{gather*}
where $H_n$ are the Hermite polynomials and $P_n^{(\lambda)}(x;\varphi)$ the Meixner--Pollaczek polynomials in the notation of \cite{K:S}.

\begin{defn}\label{thm:type-c} The type of the symmetric indeterminate moment problem with $b_n=c^c(n+1)^c$, $c>1$ is denoted by $\tau_c$.
\end{defn}

By \cite[Proposition~2.3]{B:S3} we know that $b_n=(n+1)^c$ leads to a moment problem with order~$1/c$ and type
\begin{gather*}
\frac{\tau_c}{(c^{-c})^{1/c}}=c\tau_c.
\end{gather*}

For real $c>1$ and $n\ge 1$ define the multi-zeta value\footnote{Observe that the inequalities between the indices $k_j$ are alternating between $\le$ and $<$: $k_{2j-1}\le k_{2j}<k_{2j+1}\le k_{2j+2}$.}
\begin{gather*}
\gamma_n(c)=\sum_{1\le k_1\le k_2<\cdots<k_{2n-1}\le k_{2n}}\left(k_1k_2\cdots k_{2n-1}k_{2n}\right)^{-c}.
\end{gather*}
Next define the function
\begin{gather*}
G_c(z)=\sum_{n=1}^\infty \gamma_n(c) z^n.
\end{gather*}

By \cite[Remark 1.6]{B:S3} it is known that the order of $G_c$ is $1/(2c)$ and by \cite[Theorem 1.11]{B:S3} we get $c\tau_c=T_c/2$, where $T_c$ is the type of $G_{c}$ given by
\begin{gather*}
T_c=\frac{2c}{e}\limsup_{n\to\infty}\big(n(\gamma_n(c))^{1/(2cn)}\big),
\end{gather*}
based on \cite{Le}. (Unfortunately there is a misprint in equation~(18) in \cite[p.~339]{B:S3}.) By Bochkov's proof in~\cite{Bo} of the Valent conjecture about type follows that
\begin{gather*}
T_c=B\left(\frac{1}{2c},1-\frac{1}{c}\right),\qquad \tau_c=\int_0^1\frac{{\rm d}t}{\big(1-t^{2c}\big)^{1/c}}.
\end{gather*}

We shall first compare quantities defined in terms of $b_n=b_n(3/2)=(3/2)^{3/2}(n+1)^{3/2}$ with quantities in Section~\ref{section6} for the symmetrized cubic case, where $b_n=b_n^*$ is defined in~\eqref{eq:cubicsym}.

\begin{lem}\label{thm:3/2-cub}Let $s_{2n},c_n$ denote the quantities corresponding to $b_n= b_n(3/2)$. Then
\begin{gather}\label{eq:3/2-s2n}
\lim_{n\to\infty}\frac{\root{n}\of{s_{2n}}}{n^3}=\left(\frac{3}{{\rm e}\theta}\right)^3,
\end{gather}
$($i.e., the same value as in \eqref{eq:final1}$)$ and
\begin{gather}\label{eq:3/2-cn}
\limsup_{n\to\infty} n^{3/2}\root{n}\of{c_n}=\limsup_{n\to\infty} (2n)^{3/2}\root{2n}\of{c_{2n}}=\left(\frac{2 {\rm e} \theta}{3}\right)^{3/2},
\end{gather}
$($i.e., the same value as in \eqref{eq:ck*}$)$. In particular
\begin{gather}\label{eq:3/2-cn-sn}
\limsup_{n\to\infty}\root{n} \of{c_{2n}s_{2n}}=1.
\end{gather}
\end{lem}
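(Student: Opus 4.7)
The plan rests on the asymptotic equivalence $b_n^*/b_n\to 1$ between the cubic coefficients $b_n^*$ of~\eqref{eq:cubicsym} and $b_n=b_n(3/2)=(3/2)^{3/2}(n+1)^{3/2}$ (both are $\sim 3^{3/2}(n/2)^{3/2}$), which lets us transfer the already-established cubic asymptotics \eqref{eq:final1} and \eqref{eq:Uncub} to the $(3/2)$-case. I will prove \eqref{eq:3/2-s2n}, then \eqref{eq:3/2-cn}, then \eqref{eq:3/2-cn-sn}, in that order.

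For \eqref{eq:3/2-s2n}, fix $\epsilon>0$ and choose $N=N(\epsilon)$ so that $(1-\epsilon)b_n\le b_n^*\le(1+\epsilon)b_n$ for all $n\ge N$; after passing to the shifted sequences this two-sided bound holds for every $n\ge 0$. Since scaling all recurrence coefficients by $\lambda$ multiplies $s_{2n}$ by $\lambda^{2n}$, Proposition~\ref{thm:increasofbn} gives
\begin{gather*}
(1-\epsilon)^{2n}s_{2n}^{(N)}\le s_{2n}^{*(N)}\le(1+\epsilon)^{2n}s_{2n}^{(N)},
\end{gather*}
and the analogous two-sided bound on $(b_N\cdots b_{N+n-1})^2$ yields
\begin{gather*}
\bigl(\tfrac{1-\epsilon}{1+\epsilon}\bigr)^{2n}U_n^{(N)}\le U_n^{*(N)}\le\bigl(\tfrac{1+\epsilon}{1-\epsilon}\bigr)^{2n}U_n^{(N)}.
\end{gather*}
The proof of Proposition~\ref{thm:shiftU} is symmetric in $\liminf$ and $\limsup$ (its hypothesis $b_n^{1/n}\to 1$ holds for both sequences), so combined with $\lim U_n^{*1/n}=(2/\theta)^3$ from \eqref{eq:Uncub} and $\epsilon\to 0$ via $N\to\infty$, the sandwich forces $\lim U_n^{1/n}=(2/\theta)^3$. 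Since $(b_0\cdots b_{n-1})^{2/n}=(3/2)^3(n!)^{3/n}\sim(3n/(2{\rm e}))^3$ by Stirling, \eqref{eq:3/2-s2n} follows.

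For \eqref{eq:3/2-cn}, the $(3/2)$-problem is indeterminate of order $\rho=2/3$ (as $(b_n)$ is log-concave with $\mathcal E(b_n)=2/3$, cf.~\cite[Theorem~4.11]{B:S2}) and, by the Bochkov formula recalled in the section preamble together with~\eqref{eq:theta}, of type $\tau_{3/2}=\int_0^1{\rm d}t/(1-t^3)^{2/3}=\theta$. Theorem~\ref{thm:otcn} therefore gives $\limsup_n n^{3/2}c_n^{1/n}=({\rm e}\rho\tau)^{1/\rho}=(2{\rm e}\theta/3)^{3/2}$. For the even-index equality I apply Proposition~\ref{thm:aV_k} (available since $(b_n)$ is log-concave and strictly increasing) to obtain $c_{2n-1}\le b_{2n-1}c_{2n}$; since $b_{2n-1}^{1/(2n-1)}\to 1$ and $c_{2n}^{1/(2n-1)}/c_{2n}^{1/(2n)}=c_{2n}^{1/((2n-1)(2n))}\to 1$ (using $|\log c_{2n}|=O(n\log n)$, which follows from \eqref{eq:3/2-s2n} via the lower bound $c_n\ge 1/\sqrt{s_{2n}}$ of Lemma~\ref{thm:Un}(ii)), the odd-index $\limsup$ is dominated by the even one, so the full $\limsup$ is attained along even $n$.

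Finally \eqref{eq:3/2-cn-sn} follows by factoring $c_{2n}^{1/n}s_{2n}^{1/n}=[(2n)^3c_{2n}^{1/n}]\cdot[s_{2n}^{1/n}/(2n)^3]$: the first bracket has $\limsup(2{\rm e}\theta/3)^3$ by the even part of \eqref{eq:3/2-cn}, the second converges to $(3/({\rm e}\theta))^3/8$ by \eqref{eq:3/2-s2n}, and their product equals $1$. The main technical hurdle is the upgrade, in the proof of \eqref{eq:3/2-s2n}, from a pointwise $\epsilon$-sandwich to a genuine \emph{limit} for $U_n^{1/n}$; this relies both on the $\liminf/\limsup$-symmetry implicit in the proof of Proposition~\ref{thm:shiftU} and on the freedom to send $\epsilon\to 0$ by taking $N\to\infty$.
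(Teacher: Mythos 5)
Your proof is correct, but for \eqref{eq:3/2-s2n} it takes a genuinely different and somewhat heavier route than the paper. You start from the asymptotic $b_n^*/b_n(3/2)\to 1$ and build an $\epsilon$-sandwich of the form $(1-\epsilon)b_n\le b_n^*\le(1+\epsilon)b_n$ for $n\ge N(\epsilon)$, pass to the $N$-shifted sequences, use the scaling and monotonicity of $s_{2n}$ to get $\big(\tfrac{1-\epsilon}{1+\epsilon}\big)^{2n}U_n^{(N)}\le U_n^{*(N)}\le\big(\tfrac{1+\epsilon}{1-\epsilon}\big)^{2n}U_n^{(N)}$, and then invoke the shift-invariance of $\limsup U_n^{1/n}$ (and, as you rightly note, its $\liminf$ analogue, which is implicit but not stated in Proposition~\ref{thm:shiftU}) plus $\epsilon\to 0$ to squeeze out $\lim U_n^{1/n}=(2/\theta)^3$. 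The paper avoids all this by first observing the exact interlacing $b_n(3/2)\le b_n^*\le b_{n+1}(3/2)$, proven by an elementary algebraic verification (inequalities \eqref{eq:3bsa}--\eqref{eq:3bsb}), and then feeding it into Proposition~\ref{thm:increasofbn}, \eqref{eq:shiftU1} and \eqref{eq:Un+1} to get the one-step two-sided bound $s_{2n}\le s_{2n}^*\le 4(n+1)^3s_{2n}$, from which \eqref{eq:3/2-s2n} follows immediately from \eqref{eq:final1}. The paper's route is shorter and self-contained; yours is more generic (it would work for any two sequences of symmetric recurrence coefficients that are asymptotic to one another) but has to rely on the unstated $\liminf$-version of Proposition~\ref{thm:shiftU} and a limiting argument in $\epsilon$. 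For \eqref{eq:3/2-cn} you and the paper both go through Theorem~\ref{thm:otcn} together with $\tau_{3/2}=\theta$ (you via the Bochkov formula in the preamble, the paper via~\cite[Remark~1.10]{B:S3}) and Proposition~\ref{thm:aV_k}; your argument for the even/odd $\limsup$ comparison, using the growth bound $|\log c_{2n}|=O(n\log n)$ from Lemma~\ref{thm:Un}(ii) and \eqref{eq:3/2-s2n}, is a careful spelling-out of the step the paper dismisses as ``easy to see''. The derivation of \eqref{eq:3/2-cn-sn} is essentially identical to the paper's.
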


\begin{proof} First note that
\begin{gather}\label{eq:3bs}
b_n(3/2)\le b_n^* \le b_{n+1}(3/2),
\end{gather}
i.e.,
\begin{gather}\label{eq:3bsa}
(3/2)^{3/2}(2n+1)^{3/2} \le \sqrt{3n+1}(3n+2) \le (3/2)^{3/2}(2n+2)^{3/2},\\
(3/2)^{3/2}(2n)^{3/2} \le 3n\sqrt{3n+1} \le (3/2)^{3/2}(2n+1)^{3/2}\label{eq:3bsb}.
\end{gather}
The two inequalities to the right are obvious, and so is the inequality to the left in \eqref{eq:3bsb}. The inequality to the left in \eqref{eq:3bsa} is equivalent to
\begin{gather*}
(n+1/2)^3 \le (n+1/3)(n+2/3)^2.
\end{gather*}
Setting $x=n+1/2$ the inequality is reduced to
\begin{gather*}
x^3\le (x-1/6)(x+1/6)^2= x^3+ \frac{1}{36}\big(\big(2x^2-x\big)+\big(4x^2-1/6\big)\big),
\end{gather*}
which is valid for $x\ge 1/2$.

By \eqref{eq:3bs} we get from Proposition~\ref{thm:increasofbn}, \eqref{eq:shiftU1} and~\eqref{eq:Un+1}
\begin{gather*}
s_{2n}\le s_{2n}^* \le s_{2n}^{(1)} \le \left(\frac23\right)^3 s_{2n+2}\le 4(n+1)^3 s_{2n}.
\end{gather*}
It follows that
\begin{gather*}
\frac{\root{n}\of{s_{2n}}}{n^3} \le \frac{\root{n}\of{s_{2n}^*}}{n^3} \le \big(4(n+1)^3\big)^{1/n}\frac{\root{n}\of{s_{2n}}}{n^3}
\end{gather*}
and \eqref{eq:3/2-s2n} follows from \eqref{eq:final1}.

The function $\sum\limits_{n=0}^\infty c_nz^n$ has the same order and type as the moment problem for $(b_n(3/2))$ by Theorem~3.1 in~\cite{B:S2}, so by~\eqref{eq:otcn}
\begin{gather*}
\limsup_{n\to\infty} n^{3/2} \root{n}\of{c_n} = \left(\frac{2 {\rm e}\tau_{3/2}} {3}\right)^{3/2}.
\end{gather*}
 However, by Remark~1.10 in~\cite{B:S3} we have $\tau_{3/2}=\theta$, where $\theta$ is the type of the cubic Stieltjes case of Section~\ref{section6}, and the type is unchanged when we consider the symmetrized version, cf.\ Proposition~3.1 in~\cite{B:S3}.

Therefore
\begin{gather*}
\limsup_{n\to\infty} n^{3/2}\root{n}\of{c_n} = \left(\frac{2{\rm e}\theta}{3}\right)^{3/2}= \limsup_{n\to\infty} n^{3/2}\root{n}\of{c_n^*},
\end{gather*}
where the last equality sign comes from \eqref{eq:ck*}.

By Proposition~\ref{thm:aV_k} we have $c_k\le b_k(3/2)c_{k+1}$, and from this it is easy to see that
\begin{gather*}
\limsup_{n\to\infty}(2n)^{3/2}\root{2n}\of{c_{2n}} = \limsup_{n\to\infty}(2n+1)^{3/2}\root{2n+1}\of{c_{2n+1}},
\end{gather*}
and \eqref{eq:3/2-cn} follows.
\end{proof}

\begin{thm}\label{thm:c>3/2}Define $b_n(c)=c^c(n+1)^c$, $c>1$ and let $s_{2n}(c)$, $c_n(c)$ denote the correspon\-ding quantities. Then the series $\sum c_{2n}(c) s_{2n}(c)$ is convergent for $c>3/2$ and divergent for \smash{$1<c<3/2$}, i.e., property {\rm(cs)} holds for $c>3/2$, but not for $1<c<3/2$.
\end{thm}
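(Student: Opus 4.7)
My starting point is the identity from~\eqref{eq:UnVn} with $j=0$,
\begin{equation*}
c_{2n}(c)\,s_{2n}(c) \;=\; V_{2n}(c)\,U_n(c)\,\frac{b_0(c)\cdots b_{n-1}(c)}{b_n(c)\cdots b_{2n-1}(c)}\;=\;V_{2n}(c)\,U_n(c)\,\binom{2n}{n}^{-c},
\end{equation*}
where the constant factor $c^{c}$ in $b_n(c)=c^c(n+1)^c$ cancels from the product ratio. By Stirling, $\binom{2n}{n}^{3/2-c}\sim C\,n^{(c-3/2)/2}\,4^{(3/2-c)n}$, which will decay (resp.\ grow) exponentially for $c>3/2$ (resp.\ $c<3/2$).

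The strategy is to exploit monotonicity in the parameter $c$. Since $b_{n-1}(c)/b_n(c)=(n/(n+1))^c$ is strictly decreasing in $c$, Propositions~\ref{thm:comparison} and~\ref{thm:compV} yield that both $U_n(c)$ and $V_{2n}(c)$ are non-increasing functions of $c$ on $(1,\infty)$; the same is trivially true for $\binom{2n}{n}^{-c}$. Comparing with the boundary value $c=3/2$ therefore gives
\begin{equation*}
c_{2n}(c)\,s_{2n}(c)\;\le\;c_{2n}(3/2)\,s_{2n}(3/2)\,\binom{2n}{n}^{3/2-c}\qquad(c>3/2),
\end{equation*}
and the reverse inequality for $1<c<3/2$. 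The theorem is thereby reduced to controlling the single boundary sequence $c_{2n}(3/2)\,s_{2n}(3/2)$, and for this Lemma~\ref{thm:3/2-cub} already supplies the crucial information $\limsup_n\sqrt[n]{c_{2n}(3/2)\,s_{2n}(3/2)}=1$.

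For $c>3/2$ I use the upper side of this limsup: for any $\varepsilon>0$, eventually $c_{2n}(3/2)\,s_{2n}(3/2)\le(1+\varepsilon)^n$. Picking $\varepsilon$ so small that $\rho:=(1+\varepsilon)\cdot 4^{3/2-c}<1$ makes the upper comparison read $c_{2n}(c)\,s_{2n}(c)\le C\,n^{(c-3/2)/2}\,\rho^n$, whence $\sum_n c_{2n}(c)\,s_{2n}(c)<\infty$ and (cs) holds. For $1<c<3/2$ I use the lower side: the limsup equality produces a subsequence $(n_k)$ along which $c_{2n_k}(3/2)\,s_{2n_k}(3/2)\ge(1-\varepsilon)^{n_k}$ for any fixed $\varepsilon>0$, and the reverse comparison then gives
\begin{equation*}
c_{2n_k}(c)\,s_{2n_k}(c)\;\ge\;C\,n_k^{(c-3/2)/2}\,\bigl((1-\varepsilon)\cdot 4^{3/2-c}\bigr)^{n_k}.
\end{equation*}
Since $4^{3/2-c}>1$ for $c<3/2$, I can choose $\varepsilon$ small enough that $(1-\varepsilon)\cdot 4^{3/2-c}>1$; the right-hand side then grows exponentially along $(n_k)$, forcing $\sum_n c_{2n}(c)\,s_{2n}(c)=\infty$ and failure of (cs).

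The main obstacle lies in the divergence half: only limsup information about the boundary sequence is available, so the lower estimate has to be harvested along a subsequence rather than pointwise. This is harmless on the open interval $(1,3/2)$ because the exponential base $4^{3/2-c}$ stays strictly above $1$ there, leaving room to absorb the $(1-\varepsilon)$; but it is precisely this mechanism that fails at the endpoint $c=3/2$, where the method gives no information. A further dependence worth noting is that the critical input $\limsup\sqrt[n]{c_{2n}(3/2)\,s_{2n}(3/2)}=1$ comes from Lemma~\ref{thm:3/2-cub}, whose proof rests on the explicit cubic-case analysis of Section~\ref{section6} via Valent's closed-form expressions for the Nevanlinna functions.
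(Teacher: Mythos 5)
Your proof is correct and takes essentially the same route as the paper: the identity $c_{2n}(c)s_{2n}(c)\binom{2n}{n}^c = V_{2n}(c)U_n(c)$, monotonicity of $U_n$ and $V_n$ in the parameter $c$ via Propositions~\ref{thm:comparison} and~\ref{thm:compV}, the boundary information $\limsup_n\sqrt[n]{c_{2n}(3/2)s_{2n}(3/2)}=1$ from Lemma~\ref{thm:3/2-cub}, and Stirling's $\binom{2n}{n}^{1/n}\to 4$. The paper condenses the final step into the single statement that $\limsup_n\sqrt[n]{c_{2n}(c)s_{2n}(c)}$ is $\le 4^{3/2-c}<1$ or $\ge 4^{3/2-c}>1$, whereas you unfold the implicit $\varepsilon$-and-subsequence content of that limsup, but the two are the same argument.
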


\begin{proof} From \eqref{eq:UnVn} we get
\begin{gather*}
c_{2n}(c)s_{2n}(c)\binom{2n}{n}^c=V_{2n}(c)U_n(c),
\end{gather*}
hence
\begin{gather*}
\frac{c_{2n}(c)s_{2n}(c)}{c_{2n}(3/2)s_{2n}(3/2)}=\frac{V_{2n}(c)U_n(c)}{V_{2n}(3/2)U_n(3/2)}\binom{2n}{n}^{3/2-c}.
\end{gather*}
By Propositions~\ref{thm:comparison} and~\ref{thm:compV} $U_n(c)$ and $V_n(c)$ are decreasing in~$c$, hence
\begin{gather*}
\frac{c_{2n}(c)s_{2n}(c)}{c_{2n}(3/2)s_{2n}(3/2)}\ \begin{cases}
\le \binom{2n}{n}^{3/2-c} & \text{for} \ 3/2 < c,\\
\ge \binom{2n}{n}^{3/2-c} & \text{for} \ 1 < c<3/2.
\end{cases}
\end{gather*}

By Stirling's formula
\begin{gather*}
\lim_{n\to\infty} \binom{2n}{n}^{1/n}=4,
\end{gather*}
so by \eqref{eq:3/2-cn-sn} we get
\begin{gather*}
\limsup_{n\to\infty}\root{n} \of{c_{2n}(c)s_{2n}(c)} \ \begin{cases} \le 4^{3/2-c} < 1& \text{for} \ 3/2 < c, \\
\ge 4^{3/2-c} >1& \text{for} \ 1 < c < 3/2.
\end{cases}\tag*{\qed}
\end{gather*}\renewcommand{\qed}{}
\end{proof}

\begin{thm}\label{thm:AH-c>3/2} The symmetric moment problem with $b_n(c)=c^c(n+1)^c$ has property~{\rm (cs*)} of Definition~{\rm \ref{thm:defAH}} when $c>3/2$.
\end{thm}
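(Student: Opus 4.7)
The plan is to reduce property~(cs*) for the sequence $b_n(c)=c^c(n+1)^c$ to the already-established benchmark case $c=3/2$, following the comparison strategy of Theorem~\ref{thm:c>3/2}. Symmetry of the moment problem shows that verifying (cs*) amounts to verifying convergence of the series $\sum_{n\ge j/2} c_{2n-j}(c)s_{2n}(c)$ for every integer $j\ge 0$ (the cases $j$ even and $j$ odd arise respectively from $m$ even and $m$ odd in Definition~\ref{thm:defAH}).

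My first step would be to apply the identity \eqref{eq:UnVn} together with the explicit evaluations $b_0(c)\cdots b_{n-1}(c)=c^{cn}(n!)^c$ and $b_n(c)\cdots b_{2n-j-1}(c)=c^{c(n-j)}((2n-j)!/n!)^c$ to obtain
\begin{gather*}
c_{2n-j}(c)s_{2n}(c)=V_{2n-j}(c)U_n(c)\cdot c^{cj}\left(\frac{(n!)^2}{(2n-j)!}\right)^c.
\end{gather*}
Dividing by the corresponding expression at $c=3/2$ and invoking Propositions~\ref{thm:comparison} and~\ref{thm:compV}---both $U_n$ and $V_n$ are monotone decreasing in $c$, since $b_{n-1}(c)/b_n(c)=(n/(n+1))^c$ decreases as $c$ increases---one arrives at
\begin{gather*}
c_{2n-j}(c)s_{2n}(c)\le c_{2n-j}(3/2)s_{2n}(3/2)\cdot \frac{c^{cj}}{(3/2)^{(3/2)j}}\left(\frac{(n!)^2}{(2n-j)!}\right)^{c-3/2}.
\end{gather*}

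To close the argument I need two ingredients. For the last factor, writing $(n!)^2/(2n-j)!=P_j(n)/\binom{2n}{n}$ with $P_j(n)=(2n)(2n-1)\cdots(2n-j+1)$ polynomial of degree $j$ in $n$, Stirling's formula shows that this factor behaves like $n^{O(1)}\,4^{-n(c-3/2)}$. To control $c_{2n-j}(3/2)s_{2n}(3/2)$, I would apply Proposition~\ref{thm:aV_k} to the strictly increasing log-concave sequence $(b_n(3/2))$, obtaining $c_{2n-j}(3/2)\le C_j\, n^{(3/2)j}\, c_{2n}(3/2)$, and combine with Lemma~\ref{thm:3/2-cub}, specifically the bound $\limsup_n\root{n}\of{c_{2n}(3/2)s_{2n}(3/2)}=1$ from~\eqref{eq:3/2-cn-sn}. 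Putting the pieces together yields
\begin{gather*}
\limsup_{n\to\infty}\root{n}\of{c_{2n-j}(c)s_{2n}(c)}\le 4^{3/2-c}<1\qquad(c>3/2),
\end{gather*}
which gives convergence of $\sum_n c_{2n-j}(c)s_{2n}(c)$ for every $j\ge 0$, hence property~(cs*).

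The main subtlety is that neither Theorem~\ref{thm:c>3/2} nor~\eqref{eq:3/2-cn-sn} directly addresses $c_{2n-j}s_{2n}$ when $j>0$; however, the extra factor $n^{(3/2)j}$ produced by the log-concavity bound $c_k\le b_k c_{k+1}$ is only polynomial in $n$ and is absorbed into the dominant geometric decay $4^{-n(c-3/2)}$, so the argument goes through uniformly in $j$.
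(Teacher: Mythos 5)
Your proof is correct and rests on the same key ingredients as the paper's, but it is organized less economically. The paper's proof is a single step: apply Proposition~\ref{thm:aV_k} directly to the log-concave, strictly increasing sequence $(b_n(c))$ itself, obtaining
\begin{gather*}
c_{2n-j}(c)\le c^{cj}\bigl((2n-j+1)(2n-j+2)\cdots(2n)\bigr)^c\, c_{2n}(c),
\end{gather*}
a polynomial-in-$n$ multiple of $c_{2n}(c)$; since $\limsup_n\root{n}\of{c_{2n}(c)s_{2n}(c)}\le 4^{3/2-c}<1$ was already established in the proof of Theorem~\ref{thm:c>3/2}, the conclusion follows immediately. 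You instead re-run the comparison with the benchmark $c=3/2$ inside this proof: the explicit $U_n V_n$ factorization of \eqref{eq:UnVn}, the $c$-monotonicity of $U_n$ and $V_n$ from Propositions~\ref{thm:comparison} and~\ref{thm:compV}, and Stirling's formula, then apply Proposition~\ref{thm:aV_k} at $c=3/2$ (rather than at $c$) to transport the shift $j$ to the benchmark before citing \eqref{eq:3/2-cn-sn}. In effect you re-derive the $j=0$ bound of Theorem~\ref{thm:c>3/2} while carrying the shift along, rather than citing it. Both routes are sound and the estimates all check out; the difference is one of economy rather than substance.
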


\begin{proof} By Proposition~\ref{thm:aV_k} we have $c_k(c)\le b_k(c)c_{k+1}(c)$, hence for $j\ge 0$
\begin{gather*}
c_{2n-j}(c)\le c^{cj}\big((2n-j+1)(2n-j+2)\cdots(2n)\big)^c c_{2n}(c),
\end{gather*}
and it follows that
\begin{gather*}
\limsup_{n\to\infty} \root{n} \of{c_{2n-j}(c)s_{2n}(c)}\le 4^{3/2-c}<1.
\end{gather*}
This shows that \eqref{eq:series} holds.
\end{proof}

\begin{rem}\label{thm:rem75} We do not know if (ac) or (cs) holds in case $c=3/2$. We also do not know if (aci) holds when $1<c<3/2$. In order to answer these questions, it seems to be necessary to have information about the orthogonal polynomials~$P_n(3/2;x)$.
\end{rem}

\appendix
\section{Appendix}\label{appendixA}

\subsection[A lower bound of $(U_n)$ when $(b_n)$ is bounded]{A lower bound of $\boldsymbol{(U_n)}$ when $\boldsymbol{(b_n)}$ is bounded}

\begin{prop}\label{mean} Let $(b_n)$ be a bounded sequence. Then
\begin{gather*}
\liminf_{n\to\infty} U_n^{1/n}\ge 4.
\end{gather*}
If $\lim b_n=b\ge 0$, then $\liminf$ is a limit.
\end{prop}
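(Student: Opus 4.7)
The plan is to reduce to the constant-coefficient (Catalan) case via a sandwich estimate on the $u_{n,k}$ from~\eqref{eq:u0}, transferred by the shift inequality of Proposition~\ref{thm:shiftU}(i).

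First I would establish the sandwich lemma: if $m\le b_n\le M$ for all $n$ with $m>0$, then $u_{n,k}\ge(m/M)^n v_{n,k}$, where $v_{n,k}$ are the ballot numbers defined by $v_{n+1,k}=v_{n,k}+v_{n,k-1}$, $v_{0,0}=1$, with the same boundary convention as in~\eqref{eq:u'}, so that $\sum_k v_{n,k}^2=C_n$ (the $n$-th Catalan number) reproduces the constant-$b$ case. The proof is a short induction on~$n$: both ratios $b_{n-2k}/b_n$ and $b_{n+1-2k}/b_n$ in~\eqref{eq:u'} exceed $m/M$ whenever the corresponding indices are non-negative (and the boundary terms vanish on both sides), giving $u_{n+1,k}\ge(m/M)(u_{n,k}+u_{n,k-1})\ge(m/M)^{n+1}v_{n+1,k}$. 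Summing squares and using $C_n^{1/n}\to 4$ yields $\liminf U_n^{1/n}\ge 4(m/M)^2$.

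For a general bounded sequence I would apply this to each tail $(b_n^{(n_0)})$, with $m_{n_0}=\inf_{k\ge n_0}b_k$ and $M_{n_0}=\sup_{k\ge n_0}b_k$. Proposition~\ref{thm:shiftU}(i) gives $\liminf_n U_n^{1/n}\ge\liminf_n(U_n^{(n_0)})^{1/n}\ge 4(m_{n_0}/M_{n_0})^2$ whenever $m_{n_0}>0$. If $\lim b_n=b>0$, then $m_{n_0},M_{n_0}\to b$ and the bound becomes $\liminf U_n^{1/n}\ge 4$. If $\lim b_n=0$, Cesaro on $\log b_n$ forces $(b_0\cdots b_{n-1})^{1/n}\to 0$ while $s_{2n}^{1/(2n)}\to\|J\|>0$ (the spectral radius of the bounded Jacobi operator, positive since $\mu$ has infinite support), hence $U_n^{1/n}\to\infty$, which settles both the $\liminf\ge 4$ bound and the limit existence.

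The hard part is a bounded $(b_n)$ with $\liminf<\limsup$, where the sandwich only gives $4(\liminf/\limsup)^2<4$. To close this gap I would invoke a variational estimate on the truncated Jacobi matrix $J_N$ acting on $\{0,\dots,N\}$: using a Chebyshev-type trial vector rescaled by $\sqrt{b_0\cdots b_{k-1}}$ one expects $\|J\|\ge\|J_N\|\ge 2(b_0\cdots b_{N-1})^{1/N}\cos(\pi/(N+2))$, and passing to the $\limsup$ in~$N$ gives $\|J\|\ge 2\limsup_n(b_0\cdots b_{n-1})^{1/n}$; since $\lim s_{2n}^{1/n}=\|J\|^2$, this forces $\liminf U_n^{1/n}=\|J\|^2/\limsup(b_0\cdots b_{n-1})^{2/n}\ge 4$ unconditionally. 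Finally, the second statement follows because $\lim b_n=b\ge 0$ makes both $(b_0\cdots b_{n-1})^{1/n}\to b$ and $s_{2n}^{1/(2n)}\to\|J\|$ genuine limits, so $U_n^{1/n}\to(\|J\|/b)^2\in[4,\infty]$.
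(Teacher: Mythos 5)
Your sandwich lemma (reducing to Catalan numbers and giving $\liminf U_n^{1/n}\ge 4(m/M)^2$), the tail improvement via Proposition~\ref{thm:shiftU}, and the treatment of $\lim b_n=0$ are all sound, and together they settle the proposition whenever $(b_n)$ converges. The decisive step for a general bounded sequence, however, is the asserted variational bound for the $(N+1)\times(N+1)$ truncated Jacobi matrix,
\begin{gather*}
\|J_N\|\ge 2(b_0b_1\cdots b_{N-1})^{1/N}\cos\!\left(\frac{\pi}{N+2}\right),
\end{gather*}
and this inequality is \emph{false}. Take $N=3$ with $b_0=b_2=a$ and $b_1=a^{-2}$, so that the geometric mean is exactly $1$. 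By persymmetry the largest eigenvalue of the $4\times4$ truncation comes from a vector of the form $(x,y,y,x)$ and solves $\lambda^2-a^{-2}\lambda-a^2=0$; for $a=1.1$ this gives $\lambda_{\max}\approx1.588$, whereas $2\cos(\pi/5)\approx1.618$. No rescaled Chebyshev trial vector can therefore yield the finite-$N$ geometric-mean bound once the $b_k$ oscillate, and your chain $\|J\|\ge\|J_N\|\ge 2G_N\cos(\pi/(N+2))$ breaks at its second link in exactly the case you flag as hard.

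The asymptotic inequality $\|J\|\ge 2\limsup_n(b_0\cdots b_{n-1})^{1/n}$ you would need is nonetheless true, but the paper reaches it by a genuinely different device: it forms the Cesaro averages $\tilde J_N=N^{-1}\sum_{j=0}^{N-1}(S^*)^jJS^j$ under the unilateral shift $S$, notes that each $\tilde J_N$ is again a Jacobi matrix with off-diagonals $b_n^{[N]}=N^{-1}\sum_{j<N}b_{n+j}$ and $\|\tilde J_N\|\le\|J\|$, and extracts a $*$-weakly convergent subsequence whose limit is the free Jacobi matrix with constant off-diagonal $b=\limsup_N b_0^{[N]}$; this yields $2b\le\|J\|$, and the AM--GM inequality $(b_0\cdots b_{N-1})^{1/N}\le b_0^{[N]}$ finishes. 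You have identified the correct target inequality, but the proposed route to it does not go through.
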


\begin{proof}Let $J$ denote the Jacobi matrix associated with the sequence $(b_n),$ acting on $\ell^2(\mathbb{N}_0)$ by
\begin{gather*}
J\delta_n=b_n\delta_{n+1}+b_{n-1}\delta_{n-1},\qquad n\ge 0,
\end{gather*}
where $\delta_n$, $n\in\mathbb N_0$ is the standard orthonormal basis in $\ell^2(\mathbb N_0)$ with the convention $b_{-1}=0,$ $\delta_{-1}=0$.
The operator $J$ is bounded on $\ell^2(\mathbb{N}_0)$ and its operator norm is given by
\begin{gather*}
\|J\|^2=\lim_{n\to\infty}s_{2n}^{1/n}.
\end{gather*}

Let $S$ denote the unilateral shift defined by $S\delta_n=\delta_{n+1}$. Define
\begin{gather*}
J_N={1\over N}\sum_{j=0}^{N-1}(S^*)^jJS^j.
\end{gather*}
Then
 \begin{gather*}
\|J_N\|\le \|J\|.
\end{gather*}
The operator $J_N$ is corresponds to the Jacobi matrix with coefficients
\begin{gather*}
b_n^{[N]}={1\over N}\sum_{j=0}^{N-1}b_{n+j}.
\end{gather*}
For any fixed $n$ we have
\begin{gather*}
\lim_{N\to\infty} \big[b_{n+1}^{[N]}-b_n^{[N]}\big]=\lim_{N\to\infty}{b_{n+N}-b_{n}\over N}=0,
\end{gather*}
hence
\begin{gather}\label{limit}
\lim_{N\to\infty} \big[b_{n}^{[N]}-b_0^{[N]}\big]=0.
\end{gather}
We will consider two cases.

{\bf Case (a).} The sequence $b_0^{[N]}$, $N\ge 1$ does not converge to $0$. Let $b$ denote the greatest accumulation point of $b_0^{[N]}$, $N\ge 1$. Then there exists a sequence $N_1<N_2<\cdots$ from $\N$ so that
\begin{gather*}
\lim_{{k\to\infty}}b_{0}^{[N_k]}=b.
\end{gather*}
 By (\ref{limit}) for any fixed $n$ we have
\begin{gather*}
\lim_{{k\to\infty}}b_{n}^{[N_k]}=b.
\end{gather*}
The latter implies that the sequence of the operators $J_{N_k}$ converges $*$-weakly to the operator $\widetilde{J}$ with constant Jacobi coefficients $b$. It is well known that
\begin{gather*}
\big\|\widetilde{J}\big\|=2b.
\end{gather*}
 Thus, by Fatou's lemma, we get
\begin{gather}\label{norm}
2b=\big\|\widetilde{J}\big\|\le \liminf_{k\to\infty} \|J_{N_k}\|\le \|J\|.
\end{gather}
On the other hand we have
\begin{gather}\label{ineq}
(b_0b_1\cdots b_{N-1})^{1/ N}\le {1\over N}\sum_{j=0}^{N-1}b_j=b_0^{[N]}.
\end{gather}
Hence
\begin{gather}\label{ga}
\limsup_N (b_0b_1\cdots b_{N-1})^{1/ N}\le b.
\end{gather}
Therefore by (\ref{norm}) and (\ref{ga}) we obtain
\begin{gather*}\liminf_{N\to\infty} {{s_{2N}}^{1/N}\over \big(b_0^2b_1^2\cdots b_{N-1}^2\big)^{1/ N}}=
\liminf_{N\to\infty}{\|J\|^2\over \big(b_0^2b_1^2\cdots b_{N-1}^2\big)^{1/ N}}\\
\hphantom{\liminf_{N\to\infty} {{s_{2N}}^{1/N}\over \big(b_0^2b_1^2\cdots b_{N-1}^2\big)^{1/ N}}}{}
= {\|J\|^2\over \limsup\limits_{N\to\infty} \big(b_0^2b_1^2\cdots b_{N-1}^2\big)^{1/ N}}\ge {4b^2\over b^2}=4.
\end{gather*}

{\bf Case (b).} The sequence $b_0^{[N]}$, $N\ge 1$ converges to $0$. Then by (\ref{ineq}) we get
\begin{gather*}
\lim_{N\to\infty}(b_0b_1\cdots b_{N-1})^{1/ N}=0.
\end{gather*}
 Next
\begin{gather*}
\lim_{N\to\infty}{s_{2N}^{1/N}\over \big(b_0^2b_1^2\cdots b_{N-1}^2\big)^{1/ N}}=\lim_{N\to\infty}{\|J\|^2\over \big(b_0^2b^2_1\cdots b^2_{N-1}\big)^{1/ N}}=\infty.
\end{gather*}
The last part of the result follows by inspection of the proof given for the first part.
\end{proof}

\begin{rem}\label{thm:Geg}\rm The proof of Proposition~\ref{mean} shows that if $\lim b_n=1/2$ and if the moment representing measure $\mu$ is supported by $[-1,1]$, then $\|J\|=1$ and $\lim U_n^{1/n}=4$. Examples of this are given by the Gegenbauer weights
\begin{gather*}
w_\lambda(x)=\frac{(1-x^2)^{\lambda-1/2}}{B(1/2,\lambda+1/2)},\qquad -1<x<1,\qquad \lambda>-1/2.
\end{gather*}
It is known that
\begin{gather*}
s_{2n}=\frac{(1/2)_n}{(\lambda+1)_n},\qquad b_n=\frac12\sqrt{\frac{(n+1)(n+2\lambda)}{(n+\lambda)(n+\lambda+1)}},
\end{gather*}
where the expression for $b_n$ shall be interpreted for $\lambda=0$ as $b_0=1/\sqrt{2}$, $b_n=1/2$, $n \ge 1$, cf.~\cite{K:S}, so it is easy to see that $\lim U_n^{1/n}=4$ independent of $\lambda$.
\end{rem}

\subsection[Non-unicity of the inverse of $\mathcal H$]{Non-unicity of the inverse of $\boldsymbol{\mathcal H}$}

A positive definite Hankel matrix $\mathcal H$ acts injectively on $\mathcal F_c(\N_0)$ in the sense that $\mathcal Hv=0$ for $v\in \mathcal F_c(\N_0)$ implies $v=0$.

Nevertheless, it is possible that there exist sequences $v\neq 0$ with $\mathcal H v=0$, and such that $\mathcal H v$ is absolutely convergent in the sense that $\sum_n |s_{m+n}v_n| <\infty$ for all $m\ge 0$.

We show this for the following variant of the log-normal moment sequence. Let $0<q<1$ and consider
\begin{gather}\label{eq:logn1}
s_n=q^{-n^2/2}=\int_0^\infty x^n w_q(x)/x{\rm d}x,\qquad n\ge 0,
\end{gather}
where
\begin{gather*}
 w_q(x)=\frac{1}{\sqrt{2\pi\log(1/q)}} \exp\left(-\frac{(\log x)^2}{2\log(1/q)}\right).
\end{gather*}
Note that $w_q(x/q)=\sqrt{q} w_q(x)/x$, so from \cite[equation~(42)]{B:S1}, we get that the orthonormal polynomials for~\eqref{eq:logn1} are
\begin{gather}\label{eq:logn3}
P_n(x)=(-1)^n\frac{q^{n/2}}{\sqrt{(q;q)_n}}\sum_{k=0}^n\binom{n}{k}_q (-1)^kq^{k^2-k/2} x^k,
\end{gather}
with
\begin{gather*}
\binom{n}{k}_q=\frac{(q;q)_n}{(q;q)_k (q;q)_{n-k}}.
\end{gather*}

\begin{thm}\label{thm:nonuniq1} Let $\mathcal H$ be the Hankel matrix for the indeterminate Stieltjes problem with moments~\eqref{eq:logn1}. The set $\mathcal V$ of real sequences $v$ such that $\mathcal H v=0$ and $\mathcal H v$ is absolutely convergent, is an infinite dimensional vector space. For any $k\ge 1$ and $(a_0,a_1,\ldots, a_{k-1})\in\R^k$ there exists $v\in\mathcal V$ such that $v_j=a_j$, $j=0,1,\ldots, k-1$.
\end{thm}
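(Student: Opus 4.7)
My plan is to reduce the condition $v\in\mathcal V$ to a vanishing condition on an entire function via a simple rescaling, and then build an explicit family of solutions from $(z;q)_\infty$ and its $z^j$-multiples. Since $s_n=q^{-n^2/2}$, the substitution $v_n=q^{n^2/2}w_n$ together with completion of the square yields, for every $m\ge 0$,
\begin{gather*}
\sum_{n=0}^\infty s_{m+n}v_n \;=\; q^{-m^2/2}\sum_{n=0}^\infty q^{-mn}w_n \;=\; q^{-m^2/2}F\big(q^{-m}\big),
\end{gather*}
where $F(z):=\sum_{n=0}^\infty w_n z^n$. The absolute convergence required in the definition of $\mathcal V$ for every $m$ is equivalent to $\sum_n |w_n|q^{-mn}<\infty$ for every $m$, i.e.\ to $F$ being an entire function. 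Under this assumption, $v\in\mathcal V$ becomes exactly the assertion $F(q^{-m})=0$ for all $m\ge 0$.

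With this reformulation in hand I would use $\Phi(z):=(z;q)_\infty$, whose zero set is precisely $\{q^{-m}:m\ge 0\}$ and whose Taylor expansion $\Phi(z)=\sum_n f_n z^n$ with $f_n=(-1)^nq^{n(n-1)/2}/(q;q)_n$ is Euler's identity. For each $j\ge 0$, the function $F_j(z):=z^j\Phi(z)$ is still entire and vanishes on the same set, so by the paragraph above it produces a sequence $v^{(j)}\in\mathcal V$ given by $v^{(j)}_n=q^{n^2/2}f_{n-j}$ for $n\ge j$ and $v^{(j)}_n=0$ for $n<j$; the elementary bound $|f_n|\le q^{n(n-1)/2}/(q;q)_\infty$ makes the entirety check, and hence membership in $\mathcal V$, painless.

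The interpolation and infinite-dimensionality claims then reduce to linear algebra. Form the $k\times k$ matrix $M=(v^{(j)}_n)_{0\le n,j\le k-1}$. By construction $v^{(j)}_n=0$ for $n<j$ and $M_{j,j}=q^{j^2/2}f_0=q^{j^2/2}\ne 0$, so $M$ is lower triangular with nonzero diagonal and hence invertible; given $(a_0,\ldots,a_{k-1})\in\R^k$ one solves $Mc=a$ and sets $v:=\sum_{j=0}^{k-1}c_jv^{(j)}\in\mathcal V$, so $v$ has the prescribed first $k$ entries. Since $k$ is arbitrary, $\mathcal V$ projects onto $\R^k$ for every $k$, and is therefore infinite-dimensional. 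The only non-routine step is the opening reformulation; once the problem is recognised as finding entire functions vanishing on the geometric progression $\{q^{-m}\}_{m\ge 0}$, the $q$-Pochhammer is the natural candidate and everything else is bookkeeping.
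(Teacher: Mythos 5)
Your proof is correct and follows essentially the same route as the paper: both use the Euler product $(z;q)_\infty$, multiply it by a polynomial (equivalently, by the monomials $z^j$ as you do), and solve the resulting lower-triangular linear system to hit the prescribed initial values. Your upfront reformulation of the membership condition for $\mathcal V$ as ``$F$ entire and $F(q^{-m})=0$ for all $m\ge 0$'' makes the structure of the argument slightly more transparent, but the underlying construction and computations are the same as the paper's.
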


\begin{proof} Let $v=(v_n)_{n\ge 0}$ be defined by
\begin{gather}\label{eq:logn4}
v_n=(-1)^n\frac{q^{n^2-n/2}}{(q;q)_n},\qquad n\ge 0.
\end{gather}
It is well-known that
\begin{gather*}
(z;q)_\infty=\sum_{n=0}^\infty (-1)^n\frac{q^{\binom{n}{2}}}{(q;q)_n} z^n,\qquad z\in\C,
\end{gather*}
see~\cite{G:R}. For $m\ge 0$ we therefore get
\begin{gather*}
\sum_{n=0}^\infty s_{m+n}v_n=q^{-m^2/2}\sum_{n=0}^\infty (-1)^n\frac{q^{\binom{n}{2}-mn}}{(q;q)_n}=q^{-m^2/2}\big(q^{-m};q\big)_\infty=0,
\end{gather*}
and
\begin{gather*}
\sum_{n=0}^\infty s_{m+n}|v_n|=q^{-m^2/2}\sum_{n=0}^\infty \frac{q^{\binom{n}{2}-mn}}{(q;q)_n}=q^{-m^2/2}\big({-}q^{-m};q\big)_\infty<\infty,
\end{gather*}
showing that $v\in \mathcal V$ and $v_0=1$.

We next construct $v\in\mathcal V$ with $k$ initial values and this implies that $\dim\mathcal V=\infty$.

For $(c_0,\ldots, c_{k-1})\in\R^k$ to be chosen later, we define the entire function
\begin{gather*}
g_k(z)=\left(\sum_{j=0}^{k-1} c_jz^j\right) (z;q)_\infty=\sum_{n=0}^\infty b_nz^n,
\end{gather*}
where
\begin{gather*}
b_n=\sum_{j=0}^{n\wedge (k-1)} c_j(-1)^{n-j}\frac{q^{\binom{n-j}{2}}}{(q;q)_{n-j}},\qquad n\ge 0.
\end{gather*}
Defining $v_n=q^{n^2/2}b_n$, $n\ge 0$, we get for $m\ge 0$
\begin{gather*}
\sum_{n=0}^\infty s_{m+n}v_n=q^{-m^2/2}\sum_{n=0}^\infty b_n q^{-mn}=q^{-m^2/2}g_k\big(q^{-m}\big)=0.
\end{gather*}
To see the absolute convergence, we write
\begin{gather*}
 \sum_{n=k-1}^\infty s_{m+n}|v_n| \le q^{-m^2/2}\sum_{n=k-1}^\infty q^{-mn}\sum_{j=0}^{k-1}|c_j|\frac{q^{\binom{n-j}{2}}}{(q;q)_{n-j}}
 \le q^{-m^2/2}\sum_{j=0}^{k-1}|c_j| \sum_{n=j}^\infty q^{-mn}\frac{q^{\binom{n-j}{2}}}{(q;q)_{n-j}}\\
\hphantom{\sum_{n=k-1}^\infty s_{m+n}|v_n| }{} = q^{-m^2/2}\sum_{j=0}^{k-1}|c_j| q^{-mj}\sum_{n=0}^\infty q^{-mn}\frac{q^{\binom{n}{2}}}{(q;q)_{n}}<\infty.
\end{gather*}
For $i=0,1,\ldots,k-1$ we shall solve the linear equations
\begin{gather*}
v_i=q^{i^2/2}\sum_{j=0}^{i} c_j (-1)^{i-j}\frac{q^{\binom{i-j}{2}}}{(q;q)_{i-j}}=a_i,
\end{gather*}
which have a unique solution $(c_0,\ldots,c_{k-1})$.
\end{proof}

By \eqref{eq:logn3} we see that the matrix $\mathcal A=\{a_{j,k}\}$ is given for $j\ge k$ as
\begin{gather*}
a_{j,k} = (-1)^{j+k}q^{j^2+k^2-(j+k)/2}\sum_{n=j}^\infty \frac{\binom{n}{j}_q\binom{n}{k}_q}{(q;q)_n}q^{n}\\
\hphantom{a_{j,k}}{} = (-1)^{j+k}\frac{q^{j^2+k^2-(j+k)/2}}{(q;q)_j(q;q)_k}\sum_{n=j}^\infty \frac{(q;q)_n}{(q;q)_{n-j}(q;q)_{n-k}} q^n\\
\hphantom{a_{j,k}}{} = (-1)^{j+k}\frac{q^{j^2+k^2+(j-k)/2}}{(q;q)_{j-k}(q;q)_k}\sum_{p=0}^\infty
\frac{(q^{j+1};q)_p}{(q;q)_p (q^{j-k+1};q)_p}q^p,
\end{gather*}
hence
\begin{gather*}
|a_{j,k}|\le \frac{q^{j^2+k^2}}{(q;q)_j(q;q)_k(q;q)_\infty}\sum_{p=0}^\infty \frac{q^p}{(q;q)_p}.
\end{gather*}
The last sum equals $(q;q)_\infty^{-1}$, and by symmetry we get{\samepage
\begin{gather*}
|a_{j,k}|\le \frac{q^{j^2+k^2}}{(q;q)_j(q;q)_k(q;q)_\infty^2},\qquad j,k\ge 0.
\end{gather*}
 This shows that $\mathcal A\mathcal H$ is absolutely convergent, and it is easy to verify that property (cs*) holds.}

For $v\in\mathcal V\setminus\{0\}$ we have
\begin{gather*}
v=(\mathcal A\mathcal H)v\neq \mathcal A(\mathcal H v)=0.
\end{gather*}

\begin{rem}\label{thm:nonuniq2}There are symmetric matrices $\widetilde{\mathcal A}$ different from $\mathcal A$ which satisfy $\widetilde{\mathcal A}\mathcal H=\mathcal I$.

We simply construct a non-zero symmetric matrix $\mathcal M$ such that $\mathcal H\mathcal M=0$ and such that the product is absolutely convergent, and we then define $\widetilde{\mathcal A}=\mathcal A+\mathcal M$.

As the first column of $\mathcal M$ we can take $v$ defined by \eqref{eq:logn4}. As the second column we can choose $v^{(1)}\in\mathcal V$ with the initial condition $v^{(1)}_0=v_1$. As the third column we choose $v^{(2)}\in\mathcal V$, now with two initial conditions in order to make $\mathcal M$ symmetric and so on.
\end{rem}

\subsection[Behaviour of the functions $u$, $v$ at infinity]{Behaviour of the functions $\boldsymbol{u}$, $\boldsymbol{v}$ at infinity}

\begin{prop} The function $u$ given in \eqref{eq:ua} satisfies
\begin{gather}\label{eq:uainf}
\lim_{\alpha\to\infty}\alpha u(\alpha)=\frac{\pi^2}{24},
\end{gather}
and the function $v$ given by \eqref{eq:va} satisfies
\begin{gather}\label{eq:vainf}
\lim_{\alpha\to\infty}\alpha v(\alpha)=\frac{\pi^2}{6}.
\end{gather}
\end{prop}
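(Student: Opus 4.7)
The plan is to handle the two limits separately, since the function $v$ has a clean series representation while $u$ requires an asymptotic scaling argument.

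For $v(\alpha)$, the strategy is to substitute $y=x^{-\alpha}$, giving $dx=-\alpha^{-1}y^{-1/\alpha-1}dy$ and
\begin{gather*}
\alpha v(\alpha)=\int_0^1 \big({-}\log(1-y)\big)y^{-1/\alpha-1}{\rm d}y.
\end{gather*}
Expanding $-\log(1-y)=\sum_{n\ge 1} y^n/n$ and interchanging sum and integral (monotone convergence; all terms nonnegative, and for $\alpha>1$ each integral is finite) yields
\begin{gather*}
\alpha v(\alpha)=\sum_{n=1}^\infty \frac{1}{n(n-1/\alpha)}.
\end{gather*}
Dominated convergence on $\ell^1$ (each summand is bounded by $2/n^2$ once $\alpha\ge 2$) then gives $\alpha v(\alpha)\to\sum 1/n^2=\pi^2/6$, which is~\eqref{eq:vainf}.

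For $u(\alpha)$ the plan is to rescale the inner integral by the substitution $y={\rm e}^{-s/\alpha}$, which sends the interval $[x,1]$ to $[0,s_x]$ with $s_x=-\alpha\log x$, and to parametrize the outer maximization by $s_x\in[0,\log 2]$ (since $x\in[2^{-1/\alpha},1]$ corresponds to $s_x\in[0,\log 2]$). A direct calculation gives
\begin{gather*}
\alpha u(\alpha)=\max_{0\le s_0\le\log 2}\big(1+{\rm e}^{-s_0/\alpha}\big)\int_0^{s_0}\log\!\left(\frac{{\rm e}^{-s}}{1-{\rm e}^{-s}}\right)\frac{{\rm e}^{-s/\alpha}}{(1+{\rm e}^{-s/\alpha})^2}{\rm d}s.
\end{gather*}
As $\alpha\to\infty$ the prefactor $(1+{\rm e}^{-s_0/\alpha})$ and the inner weight $\frac{{\rm e}^{-s/\alpha}}{(1+{\rm e}^{-s/\alpha})^2}$ tend uniformly on $[0,\log 2]$ to $2$ and $1/4$ respectively, while the factor $\log({\rm e}^{-s}/(1-{\rm e}^{-s}))$ is nonnegative on $[0,\log 2]$ and $L^1$-integrable near $s=0$. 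Hence the integrand in the max converges uniformly in~$s_0$, and the maximum is attained in the limit at $s_0=\log 2$ (where the remaining integrand vanishes, so extending further would subtract a positive quantity).

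It remains to evaluate
\begin{gather*}
\lim_{\alpha\to\infty}\alpha u(\alpha)=\frac{1}{2}\int_0^{\log 2}\!\log\!\left(\frac{{\rm e}^{-s}}{1-{\rm e}^{-s}}\right){\rm d}s =\frac{1}{2}\int_0^{\log 2}\!\big({-}s-\log(1-{\rm e}^{-s})\big){\rm d}s.
\end{gather*}
The first piece is $-(\log 2)^2/2$; for the second, the substitution $t={\rm e}^{-s}$ transforms it into $\int_{1/2}^1 \frac{-\log(1-t)}{t}{\rm d}t={\rm Li}_2(1)-{\rm Li}_2(1/2)$, and Euler's classical identity ${\rm Li}_2(1/2)=\pi^2/12-(\log 2)^2/2$ gives $\pi^2/12+(\log 2)^2/2$. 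Adding, the $(\log 2)^2/2$ terms cancel, leaving $\alpha u(\alpha)\to \frac{1}{2}\cdot\frac{\pi^2}{12}=\pi^2/24$, which is~\eqref{eq:uainf}. The main technical obstacle is justifying uniformity of the rescaled maximum over $s_0\in[0,\log 2]$ (so that the $\max$ survives the passage to the limit); this follows because the family of integrands is uniformly bounded by an $L^1$ majorant on $[0,\log 2]$ and converges uniformly away from $s=0$, the only singularity being the integrable logarithmic one at $s=0$.
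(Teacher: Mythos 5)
Your proof is correct and takes essentially the same approach as the paper: rescale the integration variable so that $\alpha u(\alpha)$ and $\alpha v(\alpha)$ become integrals over fixed finite (or tame) domains whose integrands converge as $\alpha\to\infty$, then pass to the limit and evaluate. The paper uses the substitutions $t=y^\alpha$ (plus $x\mapsto x^{1/\alpha}$ in the outer max) for $u$ and $t=x^\alpha$ for $v$, which are the multiplicative counterparts of your exponential substitutions $y={\rm e}^{-s/\alpha}$ and $y=x^{-\alpha}$; for $u$ it then derives the two-sided inequality $\alpha u(\alpha)\le\frac12\int_{1/2}^1\log(t/(1-t))\,{\rm d}t/t$ and $\liminf_\alpha \alpha u(\alpha)\ge$ the same integral by bounding the prefactor and the weight $\frac{t^{1/\alpha}}{(1+t^{1/\alpha})^2}$ from above and below, while you phrase this as a uniform-convergence-of-$g_\alpha$-to-$g_\infty$ argument; and for $v$ it takes the limit directly under the integral sign, while you expand $-\log(1-y)$ in a series and apply dominated convergence in $\ell^1$. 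Both write-ups also reduce the final evaluation to the same dilogarithm value; all steps in yours, including the uniform-convergence justification that the $\max$ over $s_0\in[0,\log 2]$ survives the passage to the limit, are sound.
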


\begin{proof} From \eqref{eq:ua} we get
\begin{gather*}
u(\alpha)=\max_{2^{-1}\le x \le 1}\big(x^{1/\alpha}+1\big)\int_{x^{1/\alpha}}^1\log\left(\frac{y^\alpha}{1-y^\alpha}\right)\frac{{\rm d}y}{(1+y^2)}.
\end{gather*}
After the substitution $t=y^\alpha$ in the integral we find for fixed $\alpha>0$
\begin{gather*}
\alpha u(\alpha) = \max_{2^{-1}\le x \le 1}\big(x^{1/\alpha}+1\big)\int_x^1\log\left(\frac{t}{1-t}\right)
\frac{t^{1/\alpha}}{(1+t^{1/\alpha})^2}\frac{{\rm d}t}{t}\\
\hphantom{\alpha u(\alpha)}{} \le \frac12\int_{1/2}^1\log\left(\frac{t}{1-t}\right)\frac{{\rm d}t}{t}=\frac{\pi^2}{24}.
\end{gather*}
For the evaluation of the last integral we use that
\begin{gather*}
\frac{1}{2}\log^2(t) + \sum_{k=1}^\infty\frac{t^k}{k^2}
\end{gather*}
is a primitive of the integrand, and we next use formulas (0.233.3) and (0.241.2) from~\cite{Gr:Ry}.

On the other hand for any $x\in \big[2^{-1},1\big]$ we have
\begin{gather*}
\alpha u(\alpha) \ge \big(x^{1/\alpha}+1\big)\int_x^1\log\left(\frac{t}{1-t}\right)
\frac{t^{1/\alpha}}{(1+t^{1/\alpha})^2}\frac{{\rm d}t}{t}\\
 \hphantom{\alpha u(\alpha)}{} \ge \big(2^{-1/\alpha}+1\big)\int_{x}^1\log\left(\frac{t}{1-t}\right)
\frac{t^{1/\alpha}}{(1+t^{1/\alpha})^2}\frac{{\rm d}t}{t},
\end{gather*}
and hence
\begin{gather*}
\alpha u(\alpha)\ge \big(2^{-1/\alpha}+1\big)\int_{1/2}^1\log\left(\frac{t}{1-t}\right)
\frac{t^{1/\alpha}}{(1+t^{1/\alpha})^2}\frac{{\rm d}t}{t}.
\end{gather*}
This gives
\begin{gather*}
\liminf_{\alpha\to\infty} \alpha u(\alpha)\ge \frac12\int_{1/2}^1\log\left(\frac{t}{1-t}\right)\frac{{\rm d}t}{t},
\end{gather*}
so \eqref{eq:uainf} follows.

The substitution $t=x^\alpha$ in the expression \eqref{eq:va} for $v$ yields
\begin{gather*}
\alpha v(\alpha)=\int_1^\infty \log\big(\big[1-t^{-1}\big]^{-1}\big) t^{1/\alpha}\frac{{\rm d}t}{t},
\end{gather*}
and hence
\begin{gather*}
\lim_{\alpha\to\infty} \alpha v(\alpha)=\int_1^\infty\log\big(\big[1-t^{-1}\big]^{-1}\big)\frac{{\rm d}t}{t}=
-\int_0^1 \log(1-t)\frac{{\rm d}t}{t}=\frac{\pi^2}{6},
\end{gather*}
which shows \eqref{eq:vainf}.
\end{proof}

\subsection*{Acknowledgements}
\vspace{-1.5mm}

We are grateful to all the referees. Their remarks improved the exposition substantially.

\vspace{-2.5mm}

\pdfbookmark[1]{References}{ref}
\LastPageEnding


\begin{thebibliography}{99}
\footnotesize\itemsep=-1.1pt

\bibitem{Ak}
Akhiezer N.I., The classical moment problem and some related questions in
 analysis, Hafner Publishing Co., New York, 1965.

\bibitem{Be95}
Berg C., Indeterminate moment problems and the theory of entire functions,
 \href{https://doi.org/10.1016/0377-0427(95)00099-2}{\textit{J.~Comput. Appl. Math.}} \textbf{65} (1995), 27--55.

\bibitem{B:C:I}
Berg C., Chen Y., Ismail M.E.H., Small eigenvalues of large {H}ankel matrices:
 the indeterminate case, \href{https://doi.org/10.7146/math.scand.a-14379}{\textit{Math. Scand.}} \textbf{91} (2002), 67--81,
 \href{https://arxiv.org/abs/math.CA/9907110}{math.CA/9907110}.

\bibitem{B:C}
Berg C., Christensen J.P.R., Density questions in the classical theory of
 moments, \href{https://doi.org/10.5802/aif.840}{\textit{Ann. Inst. Fourier (Grenoble)}} \textbf{31} (1981), 99--114.

\bibitem{B:P}
Berg C., Pedersen H.L., On the order and type of the entire functions
 associated with an indeterminate {H}amburger moment problem, \href{https://doi.org/10.1007/BF02559520}{\textit{Ark.
 Mat.}} \textbf{32} (1994), 1--11.

\bibitem{B:P:H}
Berg C., Pedersen H.L., Logarithmic order and type of indeterminate moment
 problems (with an appendix by {W}alter {H}ayman), in Difference Equations,
 Special Functions and Orthogonal Polynomials,
 \href{https://doi.org/10.1142/9789812770752_0005}{World Sci. Publ.}, Hackensack, NJ, 2007, 51--79.

\bibitem{B:S1}
Berg C., Szwarc R., The smallest eigenvalue of {H}ankel matrices,
 \href{https://doi.org/10.1007/s00365-010-9109-4}{\textit{Constr. Approx.}} \textbf{34} (2011), 107--133, \href{https://arxiv.org/abs/0906.4506}{arXiv:0906.4506}.

\bibitem{B:S2}
Berg C., Szwarc R., On the order of indeterminate moment problems, \href{https://doi.org/10.1016/j.aim.2013.09.020}{\textit{Adv.
 Math.}} \textbf{250} (2014), 105--143, \href{https://arxiv.org/abs/1310.0247}{arXiv:1310.0247}.

\bibitem{B:S3}
Berg C., Szwarc R., Symmetric moment problems and a conjecture of {V}alent,
 \href{https://doi.org/10.1070/SM8732}{\textit{Sb. Math.}} \textbf{208} (2017), 335--359, \href{https://arxiv.org/abs/1509.06540}{arXiv:1509.06540}.

\bibitem{B:V}
Berg C., Valent G., The {N}evanlinna parametrization for some indeterminate
 {S}tieltjes moment problems associated with birth and death processes,
 \href{https://doi.org/10.4310/MAA.1994.v1.n2.a3}{\textit{Methods Appl. Anal.}} \textbf{1} (1994), 169--209.

\bibitem{Bo}
Bochkov I., Polynomial birth-death processes and the second conjecture of
 {V}alent, \href{https://arxiv.org/abs/1712.03571}{arXiv:1712.03571}.

\bibitem{Ch:I}
Chen Y., Ismail M.E.H., Some indeterminate moment problems and {F}reud-like
 weights, \href{https://doi.org/10.1007/s003659900083}{\textit{Constr. Approx.}} \textbf{14} (1998), 439--458.

\bibitem{Ch}
Chihara T.S., Indeterminate symmetric moment problems, \href{https://doi.org/10.1016/0022-247X(82)90005-1}{\textit{J.~Math. Anal.
 Appl.}} \textbf{85} (1982), 331--346.

\bibitem{C:I}
Christiansen J.S., Ismail M.E.H., A moment problem and a family of integral
 evaluations, \href{https://doi.org/10.1090/S0002-9947-05-03785-2}{\textit{Trans. Amer. Math. Soc.}} \textbf{358} (2006),
 4071--4097.

\bibitem{E:G:T}
Escribano C., Gonzalo R., Torrano E., On the inversion of infinite moment
 matrices, \href{https://doi.org/10.1016/j.laa.2015.02.031}{\textit{Linear Algebra Appl.}} \textbf{475} (2015), 292--305,
 \href{https://arxiv.org/abs/1311.3473}{arXiv:1311.3473}.

\bibitem{G:R}
Gasper G., Rahman M., Basic hypergeometric series, \href{https://doi.org/10.1017/CBO9780511526251}{\textit{Encyclopedia of
 Mathematics and its Applications}}, Vol.~96, 2nd~ed., Cambridge University
 Press, Cambridge, 2004.

\bibitem{G:L:V}
Gilewicz J., Leopold E., Valent G., New {N}evanlinna matrices for orthogonal
 polynomials related to cubic birth and death processes, \href{https://doi.org/10.1016/j.cam.2004.05.025}{\textit{J.~Comput.
 Appl. Math.}} \textbf{178} (2005), 235--245.

\bibitem{Gr:Ry}
Gradshteyn I.S., Ryzhik I.M., Table of integrals, series, and products, 6th~ed., Academic Press, Inc., San Diego, CA, 2000.

\bibitem{I:M}
Ismail M.E.H., Masson D.R., {$q$}-{H}ermite polynomials, biorthogonal rational
 functions, and {$q$}-beta integrals, \href{https://doi.org/10.2307/2154943}{\textit{Trans. Amer. Math. Soc.}}
 \textbf{346} (1994), 63--116.

\bibitem{K:S}
Koekoek R., Swarttouw R.F., The Askey-scheme of hypergeometric orthogonal
 polynomials and its $q$-analogue, Report 98-17, 1998, available at
 \url{http://aw.twi.tudelft.nl/~koekoek/askey/}.

\bibitem{Le}
Levin B.Ya., Lectures on entire functions, \textit{Translations of Mathematical
 Monographs}, Vol.~150, Amer. Math. Soc., Providence, RI, 1996.

\bibitem{Ro}
Romanov R., Order problem for canonical systems and a conjecture of {V}alent,
 \href{https://doi.org/10.1090/tran6686}{\textit{Trans. Amer. Math. Soc.}} \textbf{369} (2017), 1061--1078,
 \href{https://arxiv.org/abs/1502.04402}{arXiv:1502.04402}.

\bibitem{S:T}
Shohat J.A., Tamarkin J.D., The {P}roblem of {M}oments, \textit{American
 Mathematical Society Mathematical Surveys}, Vol.~1, Amer. Math. Soc., New
 York, 1943.

\bibitem{Va98}
Valent G., Indeterminate moment problems and a conjecture on the growth of the
 entire functions in the {N}evanlinna parametrization, in Applications and
 Computation of Orthogonal Polynomials ({O}berwolfach, 1998),
 \textit{Internat. Ser. Numer. Math.}, Vol.~131, Birkh\"{a}user, Basel, 1999,
 227--237.

\bibitem{Y}
Yafaev D.R., Unbounded {H}ankel operators and moment problems, \href{https://doi.org/10.1007/s00020-016-2289-y}{\textit{Integral
 Equations Operator Theory}} \textbf{85} (2016), 289--300, \href{https://arxiv.org/abs/1601.08042}{arXiv:1601.08042}.

\end{thebibliography}
\end{document}